\documentclass[12pt,oneside,english]{amsart}
\usepackage[T1]{fontenc}
\usepackage[latin9]{inputenc}
\usepackage{geometry}
\geometry{verbose,tmargin=2cm,bmargin=2cm,lmargin=2cm,rmargin=1.2cm}
\usepackage{amsthm}
\usepackage{amssymb}
\usepackage{setspace}
\usepackage{esint}
\onehalfspacing

\makeatletter
\numberwithin{equation}{section}
\numberwithin{figure}{section}
\theoremstyle{plain}
\newtheorem{thm}{\protect\theoremname}
  \theoremstyle{definition}
  \newtheorem{defn}[thm]{\protect\definitionname}
  \theoremstyle{definition}
  \newtheorem{example}[thm]{\protect\examplename}
  \theoremstyle{plain}
  \newtheorem{lem}[thm]{\protect\lemmaname}
  \theoremstyle{remark}
  \newtheorem{rem}[thm]{\protect\remarkname}
  \theoremstyle{plain}
  \newtheorem{prop}[thm]{\protect\propositionname}

\makeatother

\usepackage{babel}
  \providecommand{\definitionname}{Definition}
  \providecommand{\examplename}{Example}
  \providecommand{\lemmaname}{Lemma}
  \providecommand{\propositionname}{Proposition}
  \providecommand{\remarkname}{Remark}
\providecommand{\theoremname}{Theorem}

\begin{document}

\title{Gelfand-Shilov Regularity of SG Boundary Value Problems.}

\author{Pedro T. P. Lopes}
\address{Departamento de Matem\'atica da Universidade Federal de S\~ao Carlos,  13565-905, S\~ao Carlos, SP, Brazil}
\email{dritao@yahoo.com; pedrolopes@dm.ufscar.br}
\thanks{The author was supported by FAPESP (Processo n$^{\circ}$ 2012/18198-9).}

\subjclass[2010]{35B65, 35S15, 35J40}

\date{\today}

\begin{abstract}
We show that the solutions of SG elliptic boundary value problems
defined on the complement of compact sets or on the half-space have
some regularity in Gelfand-Shilov spaces. The results are obtained
using classical results about Gevrey regularity of elliptic boundary
value problems and Calderón projectors techniques adapted to the SG
case. Recent developments about Gelfand-Shilov regularity of SG pseudo-differential
operators on $\mathbb{R}^{n}$ appear in an essential way.

Keywords: Pseudo-differential operators, elliptic boundary value problems.
\end{abstract}

\maketitle
\tableofcontents{}

In this paper, we use pseudo-differential operators and Gelfand-Shilov
spaces to obtain regularity results for elliptic boundary value problems
on two different classes of unbounded open sets $\Omega$: the complement
of compact sets of $\mathbb{R}^{n}$ - assuming some regularity on
the boundary $\Gamma=\partial\Omega$ - and on the half-space. We are
interested in the following boundary value problem:

\[
\begin{array}{c}
Pu=f,\,\,\,\mbox{in}\,\Omega\\
B^{j}u=g_{j},\,\,\,\mbox{on}\,\Gamma,\, j=1,2,...,r
\end{array},
\]
where the data $f$ and $g_{j}$ belong to appropriate Gelfand-Shilov
spaces and $P$ and $B_{j}$ are differential operators, whose symbols
belong to the SG class as in definition \ref{def:SG-symbols}. The
same class of boundary value problems were studied by C. Parenti,
H. O. Cordes, A. K. Erkip and E. Schrohe \cite{Cordeserkip,Erkipcommunications,Erkiplecturenotes,ErkipSchrohe,Parenti}.
Similar problems using Boutet de Monvel algebras were also studied
by E. Schrohe, D. Kapanadze and B. W. Schulze \cite{SchroheSGboutet,KapanadzeSchulzearticle,KapanadzeSchulze,Schulze}.

The class of SG differential and pseudo-differential operators were
studied by many authors. Its definition goes back at least to C. Parenti
\cite{Parenti} and H. O. Cordes \cite{Cordesglobalparametrix}. They
appear, for instance, in quantum mechanics equations, in scattering
theory problems, as in R. Melrose \cite[Chapter 6]{Melrosescattering},
and in recent generalizations of fifth and seventh order of the KdV
equation, as remarked by M. Cappiello, T. Gramchev and L. Rodino \cite{CappielloGramchevRodinogelpsuedolocalization}.
Elliptic boundary value problems on non-compact domains and manifolds
using SG pseudo-differential operators were also studied by the already
mentioned authors.

Recently M. Cappiello, T. Gramchev, F. Nicola and L. Rodino \cite{CappielloRodino,CappielloGramchevRodinogelpsuedolocalization,Rodinonicola}
have obtained, using pseudo-differential methods and Gelfand-Shilov
spaces, more precise regularity results for linear and semi-linear
SG elliptic problems in $\mathbb{R}^{n}$. They have applied these
results to prove the exponential decay of solutions of traveling waves
equations.

In our work, we study the same Gelfand-Shilov regularity, but for
the class of SG elliptic boundary value problems studied by C. Parenti
\cite{Parenti} and A. K. Erkip \cite{Erkipcommunications,Erkiplecturenotes}.
Our main results essentially state that if the data of the SG elliptic
boundary value problem are Gelfand-Shilov functions - or Gevrey on
the bounded boundary of a set - then so is the solution. In order
to do that, we first study and characterize the restrictions of Gelfand-Shilov
functions to the classes of unbounded domains in which we are interested.
The regularity results for boundary value problems on the complement
of compact sets are then easily obtained. They are given by Theorem
\ref{thm:Teorema principal no complemento de abertos} of Section
\ref{sec:Regularity-Results} and are a simple consequence of classical
results that can be found in J. Lions and E. Magenes \cite{lionsmagenes3}.
The regularity results on the half-space require a little more, as
the border is not compact. First we have to investigate the behavior
of the class of pseudo-differential operators defined by M. Cappiello,
T. Gramchev and L. Rodino \cite{CappielloRodino,CappielloGramchevRodinogelpsuedolocalization,Rodinonicola}
on the half-space, obtaining a kind of transmission property in the
sense of L. B. de Monvel \cite{boutetMonvel} for Gelfand-Shilov functions.
These results must be combined with Calderón projectors techniques
\cite{Calderon,Seeleysingular,Hormanderboudary,WlokaRowleyLawruk}
in order to obtain the desired regularity. Our main regularity result
for this case is given by Theorem \ref{thm:teorema principal semi plano}
of Section \ref{sec:Regularity-Results}.

\section{The Gelfand-Shilov space on open sets of $\mathbb{R}^{n}$.}

In this section, we define the Gelfand-Shilov spaces on open sets
$\Omega$ of $\mathbb{R}^{n}$. We show that, under certain assumptions,
our definition coincides with the restriction of the usual Gelfand-Shilov
functions to $\Omega$. Let us first explain some notation used in
this paper.

The open ball in $\mathbb{R}^{n}$ of radius $r>0$ and with center
at the origin is denoted by $B_{r}(0)$. We denote by $\mathbb{R}_{+}^{n}$
and $\mathbb{R}_{-}^{n}$ the set of points $x=(x',x_{n})\in\mathbb{R}^{n-1}\times\mathbb{R}=\mathbb{R}^{n}$
such that $x_{n}>0$ and $x_{n}<0$, respectively. The functions $r^{\pm}:\mathcal{D}'(\mathbb{R}^{n})\to\mathcal{D}'(\mathbb{R}_{\pm}^{n})$
are just the restrictions of the distributions. The extension by $0$
of a function defined in $\mathbb{R}_{-}^{n}$ to $\mathbb{R}^{n}$
or defined in $\mathbb{R}_{+}^{n}$ to $\mathbb{R}^{n}$ is denoted
by $e^{-}:L^{2}(\mathbb{R}_{-}^{n})\to L^{2}(\mathbb{R}^{n})$ and
$e^{+}:L^{2}(\mathbb{R}_{+}^{n})\to L^{2}(\mathbb{R}^{n})$, respectively.
The upper half-plane of $\mathbb{C}$ is denoted by $\mathbb{H}:=\left\{ z\in\mathbb{C};\,\mbox{Im}(z)\ge0\right\} $
and its interior is denoted by $\overset{\circ}{\mathbb{H}}:=\left\{ z\in\mathbb{C};\,\mbox{Im}(z)>0\right\} $.
We denote by $\left\langle .\right\rangle :\mathbb{R}^{n}\to\mathbb{R}$
the function $\left\langle x\right\rangle :=\sqrt{1+\left|x\right|^{2}}$
and the set $\{0,1,2,...\}$ of non negative integers by $\mathbb{N}_{0}$.
The space $\mathcal{B}(\mathbb{C}^{p},\mathbb{C}^{q})$ is just the
space of linear operators from $\mathbb{C}^{p}$ to $\mathbb{C}^{q}$.
The Gevrey functions in an open set $\Omega$ of order $\theta$ are
denoted by $G^{\theta}(\Omega)$. The ones with compact support contained
in $\Omega$ are denoted by $G_{c}^{\theta}(\Omega)$. As usual, $\mathcal{S}(\mathbb{R}^{n})$
is the Schwartz space of smooth functions whose derivatives are rapidly
decreasing. If $\Omega$ is an open set of $\mathbb{R}^{n}$, then
$\mathcal{S}(\Omega)$ denotes the set of restrictions of Schwartz
functions to $\Omega$. The main example is $\mathcal{S}(\mathbb{R}_{+}^{n})$.
For the Fourier transform, we use $\hat{u}(\xi)=\int e^{-ix\xi}u(x)dx$.
We denote by $\left(.,.\right)_{L^{2}(\mathbb{R}^{n})^{\oplus p}}:L^{2}(\mathbb{R}^{n})^{\oplus p}\times L^{2}(\mathbb{R}^{n})^{\oplus p}\to\mathbb{C}$
the scalar product $\left(u,v\right)_{L^{2}(\mathbb{R}^{n})^{\oplus p}}=\sum_{j=1}^{p}\int u_{j}(x)\overline{v_{j}(x)}dx$,
where $u=(u_{1},...,u_{p})$ and $v=(v_{1},...,v_{p})$. Finally,
we use the multi-index notation, that is, $\alpha=\left(\alpha_{1},...,\alpha_{n}\right)\in\mathbb{N}_{0}^{n}$,
$x^{\alpha}:=x_{1}^{\alpha_{1}}...x_{n}^{\alpha_{n}}$, $\partial_{x}^{\alpha}:=\partial_{x_{1}}^{\alpha_{1}}...\partial_{x_{n}}^{\alpha_{n}}$
and $D_{x}^{\alpha}:=D_{x_{1}}^{\alpha_{1}}...D_{x_{n}}^{\alpha_{n}}$,
where $D_{x_{j}}=-i\partial_{x_{j}}$. 

The next definition is a slight extension of the usual definition
of Gelfand-Shilov functions which can be found in \cite{gelfandshilov,Rodinonicola}:
\begin{defn}
Let $\mu>0$ and $\nu>0$ be constants such that $\mu+\nu\ge1$. Let
$\Omega\subset\mathbb{R}^{n}$ be an open set. The Gelfand-Shilov
space $\mathcal{S}_{\nu}^{\mu}(\Omega)$ is defined as the space of
functions $u\in C^{\infty}\left(\Omega\right)$ for which there are
constants $C>0$ and $D>0$ depending only on $u$ such that 
\[
\left|x^{\alpha}\partial_{x}^{\beta}u(x)\right|\le CD^{\left|\alpha\right|+\left|\beta\right|}\left(\alpha!\right)^{\nu}\left(\beta!\right)^{\mu},\,\forall\alpha,\beta\in\mathbb{N}_{0}^{n},\,\forall x\in\Omega.
\]

For each constant $D>0$, we define the subspace $\mathcal{S}_{\nu,D}^{\mu}(\Omega)\subset\mathcal{S}_{\nu}^{\mu}(\Omega)$
of the functions that satisfy the above estimate for the constant
$D$. This is a Banach space whose norm is given by
\[
\sup_{\alpha,\beta}\sup_{x\in\Omega}D^{-\left|\alpha\right|-\left|\beta\right|}\left(\alpha!\right)^{-\nu}\left(\beta!\right)^{-\mu}\left|x^{\alpha}\partial_{x}^{\beta}u(x)\right|.
\]

The space $\mathcal{S}_{\nu}^{\mu}(\Omega)$ is endowed with the topology
of inductive limit: $\mathcal{S}_{\nu}^{\mu}(\Omega)=\cup_{D>0}\mathcal{S}_{\nu,D}^{\mu}(\Omega)$.
The continuous linear functionals on $\mathcal{S}_{\nu}^{\mu}(\Omega)$
are denoted by $\mathcal{S}_{\nu}^{\mu'}(\Omega)$. 
\end{defn}
It is clear from the definition that $G_{c}^{\mu}(\Omega)\subset\mathcal{S}_{\nu}^{\mu}(\Omega)\subset G^{\mu}(\Omega)$.
Moreover, for bounded sets, $u\in\mathcal{S}_{\nu}^{\mu}(\Omega)$
if, and only if, 
\[
\left|\partial_{x}^{\beta}u(x)\right|\le CD^{\left|\beta\right|}\left(\beta!\right)^{\mu},\,\forall\beta\in\mathbb{N}_{0}^{n},\,\forall x\in\Omega.
\]

This means that, for $\mu\ge1$, $u\in\mathcal{S}_{\nu}^{\mu}(\Omega)$
if, and only if, $u$ is a Gevrey function of order $\mu$, and the
Gevrey estimates are uniform: the constants $C$ and $D$ hold for
all $\Omega$.

A function $u\in S_{\nu}^{\mu}(\Omega)$ has an exponential decay
of the form $\left|u(x)\right|\le Ce^{-\epsilon\left|x\right|^{\frac{1}{\nu}}}$,
for some $\epsilon>0$. Therefore the study of the regularity in these
spaces leads to a better understanding of the behavior of the solutions
at the infinity as well as the Gevrey regularity of the solutions.

A simple and useful remark, which will be used in Section \ref{sub:The behaviour of pseudo},
is that $u\in\mathcal{S}_{\nu}^{\mu}(\Omega)$ iff for every $m\in\mathbb{R}$,
there are constants $C>0$ and $D>0$ depending only on $u$ and $m$
such that 
\[
\left|x^{\alpha}\partial_{x}^{\beta}u(x)\right|\le CD^{\left|\alpha\right|+\left|\beta\right|}\left(\alpha!\right)^{\nu}\left(\beta!\right)^{\mu}\left\langle x\right\rangle ^{m},\,\forall\alpha,\beta\in\mathbb{N}_{0}^{n},\,\forall x\in\Omega.
\]

In general, if $\Omega\ne\mathbb{R}^{n}$, not every function in $\mathcal{S}_{\nu}^{\mu}(\Omega)$
is necessarily the restriction of a function in $\mathcal{S}_{\nu}^{\mu}(\mathbb{R}^{n})$,
as the following example shows.
\begin{example}
Let $u:\mathbb{R}_{+}\to\mathbb{C}$ be given by $u(x)=e^{-x}$. Hence
$u\in\mathcal{S}_{1}^{1}(\mathbb{R}_{+})$, but there is no function
$v\in\mathcal{S}_{1}^{1}(\mathbb{R})$ such that that $v(x)=u(x)$
for $x>0$. In fact, if $v\in\mathcal{S}_{1}^{1}(\mathbb{R})$, then
$v$ extends to a holomorphic function in the strip $\left\{ z\in\mathbb{C},-T\le\mbox{Im}(z)\le T\right\} $
\cite[Proposition 6.1.8.]{Rodinonicola}. Hence $v(z)=e^{-z}$ everywhere.
As $x\in\mathbb{R}\mapsto e^{-x}$ is not a function in $\mathcal{S}_{1}^{1}(\mathbb{R})\subset\mathcal{S}(\mathbb{R})$,
we obtain a contradiction.
\end{example}
For some situations, however, this is true. Let us study two situations:
The half-space and the complement of a compact  set. We start with
the half-space situation.
\begin{thm}
\label{thm:teo de extensao de func gelf shilov}Let $f\in\mathcal{S}_{\nu}^{\mu}(\mathbb{R}_{+}^{n})$,
$\mu>1$ and $\nu>0$. Then there is a function $g\in\mathcal{S}_{\nu}^{\mu}(\mathbb{R}^{n})$
such that $g(x)=f(x)$ for all $x\in\mathbb{R}_{+}^{n}$.
\end{thm}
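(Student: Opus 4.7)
The plan is to construct a Seeley-type extension operator adapted to the Gelfand--Shilov scale and verify that it simultaneously preserves the Gevrey bounds on derivatives and the polynomial weights $x^{\alpha}$. The hypothesis $\mu>1$ is essential: it is precisely what makes the Gevrey variant of Seeley's extension theorem available through nontrivial moment sequences and a compactly supported Gevrey cutoff; no analogous construction exists in the analytic case $\mu=1$.

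First I would choose real sequences $\{a_k\}_{k\ge 0}$, $\{b_k\}_{k\ge 0}$ with $b_k\ge 1$ and $b_k\to\infty$, together with a cutoff $\phi\in G_{c}^{\mu}(\mathbb{R})$ equal to $1$ near the origin, satisfying the classical moment conditions $\sum_{k} a_k (-b_k)^{j} = 1$ for every $j\in\mathbb{N}_{0}$ (this will match all one-sided derivatives at $x_n=0$) together with the Gevrey growth bound $\sum_{k} |a_k|\, b_k^{j}\le C_{0}D_{0}^{j}(j!)^{\mu}$ for every $j$. Then set $Ef = f$ on $\overline{\mathbb{R}_{+}^{n}}$ and
\[
(Ef)(x',x_n) = \sum_{k=0}^{\infty} a_k\, \phi(b_k x_n)\, f(x',-b_k x_n), \qquad x_n<0.
\]
Compact support of $\phi$ reduces this sum to finitely many terms at every point with $x_n\ne 0$, and the moment identities together with the standard Seeley argument yield $Ef\in G^{\mu}(\mathbb{R}^{n})$ with a global Gevrey bound on $\partial^{\beta}(Ef)$.

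It then remains to control the weight $x^{\alpha}$. On $\{x_n\ge 0\}$ the estimate is inherited directly from $f$. On $\{x_n<0\}$, fix $k$ and set $y=(x',-b_k x_n)$; then $y_n>0$ and $x_n^{\alpha_n} = (-1)^{\alpha_n} b_k^{-\alpha_n} y_n^{\alpha_n}$. Leibniz's formula gives
\[
\partial_{x_n}^{\beta_n}\bigl[\phi(b_k x_n)\, f(x',-b_k x_n)\bigr] = \sum_{j\le\beta_n}\binom{\beta_n}{j} b_k^{\beta_n}(-1)^{\beta_n-j}\phi^{(j)}(b_k x_n)(\partial_{y_n}^{\beta_n-j}f)(y),
\]
so each term of the series, after the Gelfand--Shilov bound on $y^{\alpha}\partial_{y}^{\gamma}f(y)$ and the Gevrey bound on $\phi^{(j)}$, is dominated by $b_k^{\beta_n-\alpha_n}\, C\,D^{|\alpha|+|\beta|}(\alpha!)^{\nu}(\beta!)^{\mu}$ up to harmless combinatorial factors. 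Since $b_k\ge 1$ we have $b_k^{-\alpha_n}\le 1$, and summing over $k$ against $|a_k|$ together with the growth bound on $\sum_k |a_k|\, b_k^{\beta_n}$ from the previous paragraph yields the required global Gelfand--Shilov estimate for $Ef$.

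The main obstacle is the simultaneous interpolation and growth requirement: producing sequences $\{a_k\}$, $\{b_k\}$ and a cutoff $\phi$ satisfying the moment identities and the Gevrey growth bound at once. This is the classical content of Seeley's extension theorem in Gevrey classes for $\mu>1$; once it is in hand, the scaling $x_n = -b_k^{-1}y_n$ transfers the polynomial weights $x^{\alpha}$ from $Ef$ to the values of $f$ at the reflected points without cost, thanks to $b_k\ge 1$.
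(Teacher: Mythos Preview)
Your approach via a Seeley-type reflection operator is genuinely different from the paper's: the paper builds a Borel--Taylor extension
\[
h(x)=\sum_{k\ge 0}\frac{1}{k!}\,a_k(x_n)\,(\partial_{x_n}^{k}f)(x',0)\,x_n^{k}
\]
from the boundary traces of $f$, using explicit Gevrey cutoffs $a_k$ of D\v{z}ana\v{s}ija whose supports shrink like $k^{-(\mu-1)}$, whereas you reflect interior values of $f$. The weight part of your argument is clean and correct: the substitution $x_n=-b_k^{-1}y_n$ together with $b_k\ge 1$ transports the factor $x_n^{\alpha_n}$ to $y_n^{\alpha_n}$ at no cost.

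There is, however, a genuine gap in the derivative estimate. After your Leibniz computation you correctly bound the $k$-th term of the series (with $b_k^{-\alpha_n}\le 1$ absorbed) by
\[
|a_k|\,b_k^{\beta_n}\,C\,D^{|\alpha|+|\beta|}(\alpha!)^{\nu}(\beta!)^{\mu}.
\]
Summing in $k$ then requires control of $\sum_k|a_k|\,b_k^{\beta_n}$. But the bound you postulate, $\sum_k|a_k|\,b_k^{j}\le C_0D_0^{j}(j!)^{\mu}$, contributes an \emph{extra} factor $(\beta_n!)^{\mu}$ on top of the $(\beta!)^{\mu}$ already present; the outcome is of order $(\beta_n!)^{2\mu}$, so you land only in $\mathcal{S}_{\nu}^{2\mu}$, not $\mathcal{S}_{\nu}^{\mu}$. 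The bound that would make your estimate close is the geometric one $\sum_k|a_k|\,b_k^{j}\le C_0D_0^{j}$, and this is incompatible with $b_k\to\infty$ together with the infinite moment conditions: if $b_{k_0}>D_0$ then $|a_{k_0}|\le C_0(D_0/b_{k_0})^{j}\to 0$ as $j\to\infty$, forcing $a_{k_0}=0$; hence only finitely many $a_k$ survive and the Vandermonde system $\sum_k a_k(-b_k)^{j}=1$ for all $j$ becomes overdetermined.

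Seeley-type extensions in Gevrey classes $G^{\mu}$, $\mu>1$, do exist in the literature, but they need a finer mechanism than the triangle inequality you apply after Leibniz---one must exploit the cancellation encoded in the moment identities, or use $k$-dependent cutoffs adapted to the scale. The paper's Borel-type construction sidesteps this entirely: since only the single derivative $(\partial_{x_n}^{k}f)(x',0)$ enters the $k$-th term, D\v{z}ana\v{s}ija's pointwise bounds on $\partial_{x_n}^{\alpha_n}\!\bigl(a_k(x_n)x_n^{k}\bigr)$ combine with the Gelfand--Shilov estimate on $f$ to produce just one factor $(\beta!)^{\mu}$, and the sum over $k$ converges geometrically once the free constant $D$ in the cutoffs is chosen large relative to the Gelfand--Shilov constant of $f$.
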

In order to prove this theorem, we use the functions and results of
G. A. Džanašija \cite{Dzanasija}. Let us fix the constant $\mu>1$
and define functions $\left\{ a_{k},\, k=0,1,2,...\right\} $ and
$\left\{ b_{k},\, k=1,2,3,...\right\} $. 
\begin{defn}
\label{def:definicao ak e bk}Let $D\ge1$ and $r>0$. Let us assume
that $\frac{1}{2r}<\mu-1$. We define functions $b_{k}:\mathbb{R}\to\mathbb{R}$,
for $k\ge1$, as 
\[
b_{k}(t)=\left\{ \begin{array}{c}
0,\,\,\, t\in]-\infty,-\sigma_{k}[\\
\exp\left(\frac{-k\sigma_{k}^{4r}}{t^{2r}\left(\sigma_{k}+t\right)^{2r}}\right),\,\,\, t\in]-\sigma_{k},0[\\
0,\,\,\, t\in]0,\infty[
\end{array}\right.,
\]
where $\sigma_{k}=D^{-1}k^{-(\mu-1)}$.

We define $a_{k}:\mathbb{R}\to\mathbb{R}$ in the following way: For
$k\ge1$, we define 
\[
a_{k}(t)=\left\{ \begin{array}{c}
\frac{\int_{-\infty}^{t}b_{k}(y)dy}{\int_{-\infty}^{\infty}b_{k}(y)dy},\,\,\, t\in]-\infty,0[\\
\frac{\int_{-\infty}^{-t}b_{k}(y)dy}{\int_{-\infty}^{\infty}b_{k}(y)dy},\,\,\, t\in]0,\infty[
\end{array}\right..
\]

For $k=0$, we choose $a_{0}=a_{1}$.
\end{defn}
We note that for all $k\ge0$, $\mbox{supp}\left(a_{k}\right)\subset\left[-1,1\right]$,
$a_{k}(0)=1$ and $\left(\frac{d^{l}a_{k}}{dt^{l}}\right)\left(0\right)=0$,
for all $l>0$. The next lemma gives the properties of the functions
$a_{k}$ that we need. For its proof, we refer to \cite{Dzanasija}.
The constant $D\ge1$ will be chosen along the proof of Theorem \ref{thm:teo de extensao de func gelf shilov}.
\begin{lem}
\cite{Dzanasija} There is a constant $T>1$, depending only on $r>0$,
such that:

(i) If $k\le\alpha_{n}$, then

\[
\left|\partial_{x_{n}}^{\alpha_{n}}\left(a_{k}(x_{n})x_{n}^{k}\right)\right|\le2^{\alpha_{n}+1}\exp\left(ak\right)D^{-k}k^{-k\left(\mu-1\right)}T^{\alpha_{n}}D^{\alpha_{n}}\alpha_{n}^{\mu\alpha_{n}}.
\]

(ii) If $k>\alpha_{n}$, then 
\[
\left|\partial_{x_{n}}^{\alpha_{n}}\left(a_{k}(x_{n})x_{n}^{k}\right)\right|\le2^{\alpha_{n}+1}\exp\left(a\left(k+1\right)\right)D^{-k}k^{-k\left(\mu-1\right)}T^{\alpha_{n}}D^{\alpha_{n}}k^{\mu\alpha_{n}}.
\]

In the above expressions $a:=\frac{16^{2r}}{3^{2r}}$.
\end{lem}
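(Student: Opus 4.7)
\emph{Proof proposal.} I would proceed in three stages. First, establish uniform Gevrey-$\mu$ pointwise bounds on the derivatives $a_k^{(l)}(t)$ with explicit dependence on $k$. Second, apply the Leibniz rule to $a_k(x_n)x_n^k$, using $\mathrm{supp}\,a_k\subset[-\sigma_k,\sigma_k]$ and the identity $\sigma_k^{k}=D^{-k}k^{-k(\mu-1)}$ to extract the factor $D^{-k}k^{-k(\mu-1)}$ common to both bounds. Third, split the resulting sum into the cases $k\le\alpha_n$ and $k>\alpha_n$ to obtain (i) and (ii).

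For Stage~1, write $b_k=\exp\phi_k$ with $\phi_k(t)=-k\sigma_k^{4r}/(t^{2r}(\sigma_k+t)^{2r})$ on $(-\sigma_k,0)$. Fa\`a di Bruno expresses $b_k^{(l)}$ as $b_k$ times a sum over partitions of $\{1,\ldots,l\}$ of products of $\phi_k^{(j)}$'s. Direct differentiation gives a bound of the form $|\phi_k^{(j)}(t)|\le C^{j}\,j!\,k\,\sigma_k^{4r}\,d(t)^{-2r-j}$, where $d(t)=\min(|t|,|\sigma_k+t|)$ and $C=C(r)$. The blow-ups at the endpoints are absorbed by the exponential decay of $b_k$ there via the elementary optimisation $x^{p}e^{-cx}\le(p/ec)^{p}$, and the balance at the critical value of $t$ produces precisely the constant $a=16^{2r}/3^{2r}$. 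Dividing by the normalising constant $Z_k=\int b_k$, bounded below by $c\,\sigma_k\,e^{-ak}$ from its central peak at $t=-\sigma_k/2$, one obtains for $l\ge1$ bounds of the form $|a_k^{(l)}(t)|\le c_0(c_0)^{l}(l!)^{\mu}D^{l}e^{ak}$, together with the trivial $|a_k|\le 1$. The constant $T$ in the statement may then be taken as $\max(c_0,2)$.

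For Stages~2 and 3, Leibniz gives
\[
\partial_{x_n}^{\alpha_n}\bigl(a_k(x_n)x_n^k\bigr)=\sum_{j=0}^{\min(\alpha_n,k)}\binom{\alpha_n}{j}a_k^{(\alpha_n-j)}(x_n)\,\frac{k!}{(k-j)!}\,x_n^{k-j}.
\]
On $\mathrm{supp}\,a_k$ one has $|x_n^{k-j}|\le\sigma_k^{k-j}$, which after factoring out $\sigma_k^{k}=D^{-k}k^{-k(\mu-1)}$ leaves only a factor $\sigma_k^{-j}=D^{j}k^{j(\mu-1)}$ to be absorbed. Combining with the Stage~1 bound for $a_k^{(\alpha_n-j)}$, using $\sum_j\binom{\alpha_n}{j}\le2^{\alpha_n}$, $k!/(k-j)!\le k^{j}$, and $((\alpha_n-j)!)^{\mu}\le(\alpha_n-j)^{\mu(\alpha_n-j)}$, case~(i) $k\le\alpha_n$ follows by bounding $(\alpha_n-j)^{\mu(\alpha_n-j)}\le\alpha_n^{\mu\alpha_n}$ and $k^{j}\le\alpha_n^{j}$, the latter two being absorbed into the $T^{\alpha_n}D^{\alpha_n}$ factor; case~(ii) $k>\alpha_n$ uses instead $k^{j}\le k^{\alpha_n}$ and $(\alpha_n-j)^{\mu(\alpha_n-j)}\le k^{\mu\alpha_n}$, with an additional $e^{a}$ absorbed into the slightly worse $e^{a(k+1)}$ prefactor.

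\emph{Main obstacle.} The delicate point is not Stage~3, which is essentially combinatorics, but Stage~1: pinning down the sharp constant $a=16^{2r}/3^{2r}$ requires the precise analytical balance between the Fa\`a di Bruno factorial growth of $\phi_k^{(j)}$ and the exponential damping of $b_k$ at its boundary singularities, optimised over both $t$ and the partition structure. This is the analytical core of D\v{z}ana\v{s}ija's construction, hence the reference to \cite{Dzanasija}.
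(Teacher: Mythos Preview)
The paper does not prove this lemma: immediately before the statement it says ``For its proof, we refer to \cite{Dzanasija}'', and no argument is given. So there is no proof in the paper to compare your proposal against; your sketch is already more than what the paper supplies.

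That said, your reconstruction is broadly on the right track and matches the structure of D\v{z}ana\v{s}ija's original argument: Gevrey-type pointwise bounds on $a_k^{(l)}$, then Leibniz and the identity $\sigma_k^{k}=D^{-k}k^{-k(\mu-1)}$, then the case split. Two small remarks. First, the constant $a=(16/3)^{2r}$ is not the value of $|\phi_k|$ at the midpoint $t=-\sigma_k/2$ (that gives $16^{r}$) but rather at $t=-\sigma_k/4$ or $t=-3\sigma_k/4$, where $|t|^{2r}|\sigma_k+t|^{2r}=(3\sigma_k^{2}/16)^{2r}$; this is indeed the point where the competition between the boundary blow-up of the derivatives of $\phi_k$ and the damping of $b_k$ is resolved, so your ``main obstacle'' is correctly identified even if the arithmetic should be pinned down. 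Second, in Stage~1 you should be careful that the bound on $a_k^{(l)}$ carries the factor $k^{l(\mu-1)}$ (coming from $\sigma_k^{-l}$ in the derivative estimates for $b_k$) rather than only $D^{l}$; otherwise the bookkeeping in Stage~3, where you need $k^{j}\cdot k^{j(\mu-1)}=k^{j\mu}$ to close case~(ii), does not quite balance. With those adjustments your outline would yield a complete proof along the lines of the cited reference.
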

Using these functions and the above lemma, we can prove Theorem \ref{thm:teo de extensao de func gelf shilov-1}
by actually providing an extension of the function $f$.
\begin{proof}
(of Theorem \ref{thm:teo de extensao de func gelf shilov}) As $f\in\mathcal{S}_{\nu}^{\mu}(\mathbb{R}_{+}^{n})$,
there is a constant $B>0$ such that
\begin{equation}
\left|x^{\alpha}\left(\partial_{x}^{\beta}f\right)(x)\right|\le B^{\left|\alpha\right|+\left|\beta\right|+1}\left(\alpha!\right)^{\nu}\left(\beta!\right)^{\mu},\,\forall\alpha,\beta\in\mathbb{N}_{0}^{n},\,\forall x\in\mathbb{R}_{+}^{n}.\label{eq:constante B}
\end{equation}

Let us choose and fix a constant $D\ge\max\left\{ 1,2Be^{a+1}\right\} $,
where $D$ is the constant used in Definition \ref{def:definicao ak e bk}. 

We define a function $h\in C^{\infty}\left(\overline{\mathbb{R}_{-}^{n}}\right)$
by

\[
h(x):=\sum_{k=0}^{\infty}\frac{1}{k!}a_{k}(x_{n})\left(\partial_{x_{n}}^{k}f\right)(x',0)x_{n}^{k},
\]
where $\left(\partial_{x_{n}}^{k}f\right)(x',0):=\lim_{x_{n}\to0^{+}}\left(\partial_{x_{n}}^{k}f\right)(x',x_{n})$.

We need to show that this series and its derivatives converge uniformly
to a function that satisfies the Gelfand-Shilov estimates on $\mathbb{R}_{-}^{n}$.
If this is the case, we see that
\[
\left(\partial_{x'}^{\alpha'}\partial_{x_{n}}^{\alpha_{n}}h\right)(x',0)=\sum_{k=0}^{\infty}\frac{1}{k!}\left.\partial_{x_{n}}^{\alpha_{n}}\left(a_{k}(x_{n})x_{n}^{k}\right)\right|_{x_{n}=0}\left(\partial_{x'}^{\alpha'}\partial_{x_{n}}^{k}f\right)(x',0)=\left(\partial_{x'}^{\alpha'}\partial_{x_{n}}^{\alpha_{n}}f\right)(x',0).
\]

Hence the function $g:\mathbb{R}^{n}\to\mathbb{C}$ defined as 
\[
g(x)=\left\{ \begin{array}{c}
f(x),\, x>0\\
h(x),\, x\le0
\end{array}\right.
\]
is such that $g\in\mathcal{S}_{\nu}^{\mu}(\mathbb{R}^{n})$ and $\left.g\right|_{\mathbb{R}_{+}^{n}}=f$.
This is the function we are looking for.

Let $k\le\alpha_{n}$. Then
\[
\left|x'^{\beta'}x_{n}^{\beta_{n}}\partial_{x'}^{\alpha'}\partial_{x_{n}}^{\alpha_{n}}\left(\frac{1}{k!}a_{k}(x_{n})\left(\partial_{x_{n}}^{k}f\right)(x',0)x_{n}^{k}\right)\right|=\frac{1}{k!}\left|x_{n}^{\beta_{n}}\partial_{x_{n}}^{\alpha_{n}}\left(a_{k}(x_{n})x_{n}^{k}\right)\left(x'^{\beta'}\partial_{x'}^{\alpha'}\partial_{x_{n}}^{k}f\right)(x',0)\right|\le
\]
\[
2^{\alpha_{n}+1}\exp\left(ak\right)D^{-k}k^{-k\left(\mu-1\right)}T^{\alpha_{n}}D^{\alpha_{n}}\alpha_{n}^{\mu\alpha_{n}}\frac{1}{k!}B^{\left|\alpha'\right|+\left|\beta'\right|+k+1}\left(\beta'!\right)^{\nu}\left(\alpha'!\right)^{\mu}\left(k!\right)^{\mu}\le
\]
\[
B^{\left|\alpha'\right|+\left|\beta'\right|+1}2^{\alpha_{n}+1}T^{\alpha_{n}}D^{\alpha_{n}}\left(\beta'!\right)^{\nu}\left(\alpha'!\right)^{\mu}\alpha_{n}^{\mu\alpha_{n}}\left(\frac{\exp\left(a\right)B}{D}\right)^{k}.
\]

As $D\ge2Be^{a}$, we obtain
\[
\sum_{k=1}^{\alpha_{n}}\left|x'^{\beta'}x_{n}^{\beta_{n}}\partial_{x'}^{\alpha'}\partial_{x_{n}}^{\alpha_{n}}\left(\frac{1}{k!}a_{k}(x_{n})\left(\partial_{x_{n}}^{k}f\right)(x',0)x_{n}^{k}\right)\right|\le
\]
\[
B^{\left|\alpha'\right|+\left|\beta'\right|+1}2^{\alpha_{n}+1}T^{\alpha_{n}}D^{\alpha_{n}}\left(\beta'!\right)^{\nu}\left(\alpha'!\right)^{\mu}\alpha_{n}^{\mu\alpha_{n}}\sum_{k=1}^{\alpha_{n}}\left(\frac{1}{2}\right)^{k}\le\tilde{C}_{1}\tilde{D}_{1}^{\left|\alpha\right|+\left|\beta'\right|}\left(\alpha!\right)^{\mu}\left(\beta'!\right)^{\nu},
\]
where $\tilde{C}_{1}>0$ and $\tilde{D}_{1}>0$ are constants that
do not depend on $\alpha$ and $\beta$.

For $k>\alpha_{n}$, we have
\[
\left|x'^{\beta'}x_{n}^{\beta_{n}}\partial_{x'}^{\alpha'}\partial_{x_{n}}^{\alpha_{n}}\left(\frac{1}{k!}a_{k}(x_{n})\left(\partial_{x_{n}}^{k}f\right)(x',0)x_{n}^{k}\right)\right|=\left|\frac{1}{k!}x_{n}^{\beta_{n}}\partial_{x_{n}}^{\alpha_{n}}\left(a_{k}(x_{n})x_{n}^{k}\right)\left(x'^{\beta'}\partial_{x'}^{\alpha'}\partial_{x_{n}}^{k}f\right)(x',0)\right|\le
\]
\[
2^{\alpha_{n}+1}\exp\left[a\left(k+1\right)\right]D^{-k}k^{-k\left(\mu-1\right)}T^{\alpha_{n}}D^{\alpha_{n}}k^{\mu\alpha_{n}}\frac{1}{k!}B^{\left|\alpha'\right|+\left|\beta'\right|+k+1}\left(\beta'!\right)^{\nu}\left(\alpha'!\right)^{\mu}\left(k!\right)^{\mu}\le
\]
\[
B^{\left|\alpha'\right|+\left|\beta'\right|+1}2^{\alpha_{n}+1}T^{\alpha_{n}}D^{\alpha_{n}}\left(\beta'!\right)^{\nu}\left(\alpha'!\right)^{\mu}k^{\mu\alpha_{n}}\exp\left(a\right)\left(\frac{\exp\left(a\right)B}{D}\right)^{k}.
\]

Using the inequality $e^{-a}\le a^{-d}d^{d}e^{-d}$, for $a>0$ and
$d>0$, we conclude that $k^{\mu\alpha_{n}}\le e^{k}\left(\mu\alpha_{n}\right)^{\mu\alpha_{n}}e^{-\mu\alpha_{n}}$.
As $D\ge2Be^{a+1}$, we obtain that
\[
\sum_{k=\alpha_{n}+1}^{\infty}\left|\frac{1}{k!}x_{n}^{\beta_{n}}\partial_{x_{n}}^{\alpha_{n}}\left(a_{k}(x_{n})x_{n}^{k}\right)\left(x'^{\beta'}\partial_{x'}^{\alpha'}\partial_{x_{n}}^{k}f\right)(x',0)\right|\le
\]
\[
\left(\exp\left(a\right)2B\right)B^{\left|\alpha'\right|+\left|\beta'\right|}\left(2TD\mu^{\mu}e^{-\mu}\right)^{\alpha_{n}}\left(\beta'!\right)^{\nu}\left(\alpha'!\right)^{\mu}\alpha_{n}^{\mu\alpha_{n}}\sum_{k=\alpha_{n}+1}^{\infty}\left(\frac{\exp\left(a+1\right)B}{D}\right)^{k}\le
\]
\[
\tilde{C}_{2}\tilde{D}_{2}^{\left|\alpha\right|+\left|\beta'\right|}\left(\alpha!\right)^{\mu}\left(\beta'!\right)^{\nu},
\]
where $\tilde{C}_{2}>0$ and $\tilde{D}_{2}>0$ are constants that
do not depend on $\alpha$ and $\beta$.\end{proof}
\begin{rem}
\label{rem:extensoes de funcoes gevrey}Precisely the same arguments
can be used to obtain extensions of Gevrey functions of order $\mu>1$:
If $f\in C^{\infty}(\mathbb{R}_{+}^{n})$ is a function such that
for all bounded sets $B\subset\mathbb{R}_{+}^{n}$ - not only compacts
- there are constants $C_{B}>0$ and $D_{B}>0$ such that
\[
\left|\partial_{x}^{\alpha}f(x)\right|\le C_{B}D_{B}^{\left|\alpha\right|}\left(\alpha!\right)^{\mu},\,\forall x\in B,
\]
then there is a Gevrey function $\tilde{f}$ of order $\mu$ defined
on $\mathbb{R}^{n}$ such that $f=\left.\tilde{f}\right|_{\mathbb{R}_{+}^{n}}$.
\end{rem}
The second situation in which we are interested is in the complement
of a bounded set. We need to be more precise about our assumptions.
\begin{defn}
\label{def:Gevrey submanifolds}Let $U$ be a bounded open set. We
say that its boundary $\Gamma=\partial U$ is a Gevrey $(n-1)$-manifold
of order $\Theta$, $U$ being locally on one side of $\Gamma$, if
for every $y\in\Gamma$, there is a bounded open set $\mathcal{O}\subset\mathbb{R}^{n}$,
$r_{y}>0$ and a Gevrey diffeomorphism $\psi:\mathcal{O}\to B_{r_{y}}(0)\subset\mathbb{R}^{n}$
of order $\Theta$ such that $\psi(U\cap\mathcal{O})=B_{r_{y}}(0)\cap\mathbb{R}_{-}^{n}$
and $\psi(\Gamma\cap\mathcal{O})=\left\{ x\in B_{r_{y}}(0);\, x_{n}=0\right\} $.
We also suppose that there exists a normal vector field $\nu$ on
$\Gamma$, such that the functions $\psi$ take $\nu$ to $\partial_{x_{n}}$.\end{defn}
\begin{thm}
\label{thm:teo de extensao de func gelf shilov-1}Let $\Omega=\mathbb{R}^{n}\backslash\overline{U}$,
where $U$ is a bounded open set, whose boundary $\Gamma=\partial U$
is a Gevrey $(n-1)$-manifold of order $\Theta$, $U$ being locally
on one side of $\Gamma$. If $f\in\mathcal{S}_{\nu}^{\mu}(\Omega)$,
$\mu>1$ and $\nu>0$, then there is a function $g\in\mathcal{S}_{\nu}^{\tilde{\mu}}(\mathbb{R}^{n})$,
$\tilde{\mu}=\max\left\{ \mu,\Theta\right\} $, such that $g(x)=f(x)$
for all $x\in\Omega$.\end{thm}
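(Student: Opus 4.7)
The strategy is to reduce to finitely many applications of the half-space Gevrey extension (Remark \ref{rem:extensoes de funcoes gevrey}) and patch the local pieces with a Gevrey partition of unity, exploiting the compactness of $\Gamma$. First, I would use compactness of $\Gamma$ to extract a finite family of charts $\psi_i : \mathcal{O}_i \to B_{r_i}(0)$, $i = 1, \dots, N$, from Definition \ref{def:Gevrey submanifolds}, together with open sets $\mathcal{O}_i' \Subset \mathcal{O}_i$ such that $\bigcup_i \mathcal{O}_i' \supset \Gamma$, and fix a Gevrey-$\tilde\mu$ partition of unity $\{\rho_0, \rho_1, \dots, \rho_N\}$ on $\mathbb{R}^n$ with $\mathrm{supp}\,\rho_i \subset \mathcal{O}_i'$ for $i \ge 1$ and $\rho_0$ vanishing on a neighborhood of $\Gamma$. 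I would then split $\rho_0 = \rho_0^+ + \rho_0^-$ according to whether each connected component of its support lies in $\Omega$ or in $U$; both pieces remain Gevrey-$\tilde\mu$ because $\rho_0$ vanishes on a neighborhood of $\Gamma$ that separates the two sides.

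Next, in each chart I would set $f_i := f \circ \psi_i^{-1}$ on $B_{r_i}(0) \cap \mathbb{R}_+^n$. Since $f \in \mathcal{S}_\nu^\mu(\Omega)$ satisfies Gevrey-$\mu$ estimates uniformly on bounded subsets of $\Omega$ and $\psi_i^{-1}$ is a Gevrey-$\Theta$ diffeomorphism, the Fa\`a di Bruno formula yields Gevrey-$\tilde\mu$ estimates for $f_i$, uniformly on bounded subsets. After multiplying $f_i$ by a Gevrey-$\tilde\mu$ cutoff supported in $B_{r_i}(0)$ that equals $1$ on $\psi_i(\overline{\mathcal{O}_i' \cap \Omega})$ and extending by zero, the hypothesis of Remark \ref{rem:extensoes de funcoes gevrey} is met, producing a Gevrey-$\tilde\mu$ extension $\tilde h_i$ to $\mathbb{R}^n$. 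Pulling back via $\psi_i$, multiplying by a Gevrey-$\tilde\mu$ cutoff $\chi_i$ with $\mathrm{supp}\,\chi_i \subset \mathcal{O}_i$ and $\chi_i \equiv 1$ on $\overline{\mathcal{O}_i'}$, and extending by $0$ off $\mathcal{O}_i$ then produces a compactly supported Gevrey-$\tilde\mu$ function $\hat h_i$ on $\mathbb{R}^n$ that agrees with $f$ on $\mathcal{O}_i' \cap \Omega$.

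The candidate extension would be
\[
g := \rho_0^+\, f + \sum_{i=1}^{N} \rho_i\, \hat h_i,
\]
each summand taken as $0$ outside the support of the respective cutoff. For $x \in \Omega$ one has $\rho_0^-(x) = 0$ and $\rho_i(x) \hat h_i(x) = \rho_i(x) f(x)$, since $\mathrm{supp}\,\rho_i \subset \mathcal{O}_i'$ and $\hat h_i = f$ on $\mathcal{O}_i' \cap \Omega$, so $g(x) = \bigl(\rho_0^+(x) + \sum_i \rho_i(x)\bigr)\, f(x) = f(x)$. The $\mathcal{S}_\nu^{\tilde\mu}(\mathbb{R}^n)$ estimates would follow from the Leibniz rule: since $\partial^\gamma \rho_0^+$ for $|\gamma| \ge 1$ has compact support, on that support the $x^\alpha$ weight is bounded, and the $\mathcal{S}_\nu^\mu$ estimate on $f$ combines with the Gevrey-$\tilde\mu$ estimate on $\rho_0^+$ to give the $(\nu,\tilde\mu)$ bound; the $\gamma = 0$ term is controlled directly by the estimate on $f$. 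Each $\rho_i \hat h_i$ is compactly supported, so $|x^\alpha| \le R^{|\alpha|} \le R^{|\alpha|}(\alpha!)^\nu$ on its support, and the Gevrey-$\tilde\mu$ bound on $\rho_i \hat h_i$ upgrades trivially to a Gelfand-Shilov bound.

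The hard part is the composition step: verifying that composing a Gevrey-$\mu$ function with a Gevrey-$\Theta$ diffeomorphism produces a Gevrey-$\max\{\mu,\Theta\}$ function by a Fa\`a di Bruno argument is what forces the order $\tilde\mu = \max\{\mu,\Theta\}$ in the conclusion. The remainder is standard bookkeeping; the splitting $\rho_0 = \rho_0^+ + \rho_0^-$ is what allows the product $\rho_0^+ f$ to be a legitimate Gelfand-Shilov function on all of $\mathbb{R}^n$ despite $f$ being defined only on $\Omega$.
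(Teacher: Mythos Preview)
Your proposal is correct and follows essentially the same route as the paper's proof: cover $\Gamma$ by finitely many chart neighborhoods, transport $f$ to the local half-space model where the composition with the Gevrey-$\Theta$ chart produces a Gevrey-$\tilde\mu$ function, extend across the flat boundary via Remark~\ref{rem:extensoes de funcoes gevrey}, pull back, and patch with a Gevrey partition of unity; then observe that the compactly supported pieces automatically satisfy Gelfand--Shilov bounds while the ``exterior'' piece inherits them directly from $f$. Your splitting $\rho_0=\rho_0^{+}+\rho_0^{-}$ plays exactly the role of the paper's pair $\phi_{\mathrm{ext}},\phi_{\mathrm{int}}$, and your extra cutoff before invoking Remark~\ref{rem:extensoes de funcoes gevrey} makes explicit a step the paper leaves implicit.
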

\begin{proof}
Let $\mathcal{O}_{1}$, ..., $\mathcal{O}_{N}$ be bounded open sets
such that $\Gamma=\cup_{j=1}^{N}\mathcal{O}_{j}$ and that there exist
Gevrey diffeomorphisms of order $\Theta$, $\psi_{j}:\mathcal{O}_{j}\to B_{r_{j}}(0)\subset\mathbb{R}^{n}$,
as in Definition \ref{def:Gevrey submanifolds}. Let $\mathcal{O}_{int}\subset U$
and $\mathcal{O}_{ext}\subset\Omega$ be open sets such that $\mathbb{R}^{n}=\mathcal{O}_{int}\cup\mathcal{O}_{ext}\cup_{j=1}^{N}\mathcal{O}_{j}$.
Let $\phi_{int}$, $\phi_{ext}$, $\phi_{1}$, ..., $\phi_{N}$ be
Gevrey functions of order $\Theta$ that form a partition of unity
subordinate to the open cover $\left\{ \mathcal{O}_{int},\mathcal{O}_{ext},\mathcal{O}_{1},\,...,\mathcal{O}_{N}\right\} $.

Then $f\circ\psi_{j}^{-1}:B_{r_{j}}(0)\to\mathbb{C}$ is a function
that satisfies, for all compact sets $K\subset B_{r_{j}}(0)$, the
estimates
\[
\left|\partial_{x}^{\alpha}\left(f\circ\psi_{j}^{-1}\right)(x)\right|\le C_{K}D_{K}^{\left|\alpha\right|}\left(\alpha!\right)^{\tilde{\mu}},\,\,\forall x\in K\cap\mathbb{R}_{+}^{n},
\]
where $\tilde{\mu}=\max\left\{ \mu,\Theta\right\} $.

Using Remark \ref{rem:extensoes de funcoes gevrey}, we conclude that
there is a Gevrey function $g_{j}:B_{r_{j}}(0)\to\mathbb{C}$ of order
$\tilde{\mu}$ such that $\left.g_{j}\right|_{B_{r_{j}}(0)\cap\mathbb{R}_{+}^{n}}=f\circ\psi_{j}^{-1}$.

Let us define $\tilde{f}:\mathbb{R}^{n}\to\mathbb{C}$ as 
\[
\tilde{f}(x)=f(x)\phi_{ext}(x)+\sum_{j=1}^{n}\left(g_{j}\circ\psi_{j}\right)(x)\phi_{j}(x).
\]

Hence $\tilde{f}$ extends $f$ and it is a Gevrey function of order
$\tilde{\mu}$ on $\mathbb{R}^{n}$. Let $R>0$ be such that $U\subset B_{R}(0)$.
Then
\[
\left|x^{\alpha}\left(\partial_{x}^{\beta}\tilde{f}\right)(x)\right|\le\left\{ \begin{array}{c}
CD^{\left|\alpha\right|+\left|\beta\right|}\left(\alpha!\right)^{\nu}\left(\beta!\right)^{\mu},\,\,\mbox{if}\, x\notin B_{R}(0)\\
\tilde{C}R^{\left|\alpha\right|}\tilde{D}^{\left|\beta\right|}\left(\beta!\right)^{\tilde{\mu}},\,\,\mbox{if}\, x\in\overline{B_{R}(0)}
\end{array}\right..
\]

This implies that $\tilde{f}\in S_{\nu}^{\tilde{\mu}}(\mathbb{R}^{n})$.
\end{proof}

\section{SG pseudo-differential operators.}

In this section, we recall the main properties of the SG calculus.
A recent detailed exposition can be found in F. Nicola and L. Rodino
\cite{Rodinonicola}. The calculus with symbols in $S_{\mu\nu}^{m_{1},m_{2}}(\mathbb{R}^{n}\times\mathbb{R}^{n})$
were originally, as far as we know, defined and studied by Cappiello
and Rodino in \cite{CappielloRodino}.
\begin{defn}
\emph{\label{def:SG-symbols}(SG symbols)} Let $m_{1}$ and $m_{2}$
belong to $\mathbb{R}$. We denote by $S^{m_{1},m_{2}}(\mathbb{R}^{n}\times\mathbb{R}^{n})$
the set of all functions $a\in C^{\infty}(\mathbb{R}^{n}\times\mathbb{R}^{n})$
satisfying, for all $\alpha$, $\beta\in\mathbb{N}_{0}^{n}$, the
estimates 
\[
\left|D_{x}^{\beta}D_{\xi}^{\alpha}a(x,\xi)\right|\le C_{\alpha\beta}\left\langle \xi\right\rangle ^{m_{1}-|\alpha|}\left\langle x\right\rangle ^{m_{2}-|\beta|},\,\,\,\forall(x,\xi)\in\mathbb{R}^{2n},
\]
where $C_{\alpha\beta}$ is a constant that depends on $a$, $\alpha$
and $\beta$. These functions are called SG symbols of class $(m_{1},m_{2})\in\mathbb{R}^{2}$. 

Similarly, we denote by $S_{\mu\nu}^{m_{1},m_{2}}(\mathbb{R}^{n}\times\mathbb{R}^{n})$,
where $\mu$ and $\nu$ are real numbers such that $\mu\ge1$ and
$\nu\ge1$, the subspace of $S^{m_{1},m_{2}}(\mathbb{R}^{n}\times\mathbb{R}^{n})$
defined as follows: $a\in S_{\mu\nu}^{m_{1},m_{2}}(\mathbb{R}^{n}\times\mathbb{R}^{n})$
if there are constants $C>0$ and $D>0$ depending only on $a$ such
that $C_{\alpha\beta}=CD^{\left|\alpha\right|+\left|\beta\right|}\left(\alpha!\right)^{\mu}\left(\beta!\right)^{\nu}$.

We denote by $S^{m_{1},m_{2}}(\mathbb{R}^{n}\times\mathbb{R}^{n},\mathcal{B}(\mathbb{C}^{p},\mathbb{C}^{q}))$
and $S_{\mu\nu}^{m_{1},m_{2}}(\mathbb{R}^{n}\times\mathbb{R}^{n},\mathcal{B}(\mathbb{C}^{p},\mathbb{C}^{q}))$
the classes of functions $a:\mathbb{R}^{n}\times\mathbb{R}^{n}\to\mathcal{B}(\mathbb{C}^{p},\mathbb{C}^{q})$,
where each entry of the matrix belongs to $S^{m_{1},m_{2}}(\mathbb{R}^{n}\times\mathbb{R}^{n})$
and $S_{\mu\nu}^{m_{1},m_{2}}(\mathbb{R}^{n}\times\mathbb{R}^{n})$,
respectively.
\end{defn}
It is clear that we could do the same definitions also for $\left(x,\xi\right)\in\mathbb{R}^{n_{1}}\times\mathbb{R}^{n_{2}}$.
We denote theses spaces by $S^{m_{1},m_{2}}(\mathbb{R}^{n_{1}}\times\mathbb{R}^{n_{2}})$,
$S_{\mu\nu}^{m_{1},m_{2}}(\mathbb{R}^{n_{1}}\times\mathbb{R}^{n_{2}})$
and so on.

We are mostly interested in elliptic symbols.
\begin{defn}
\label{SG ellipticity}Let $a\in S^{m_{1},m_{2}}(\mathbb{R}^{n}\times\mathbb{R}^{n},\mathcal{B}(\mathbb{C}^{p},\mathbb{C}^{q}))$,
$\left(m_{1},m_{2}\right)\in\mathbb{R}^{2}$. We say that the symbol
$a$ is left (right) elliptic if there are constants $R>0$ and $C>0$
such that if $\left|\left(x,\xi\right)\right|\ge R$, then $a(x,\xi)$
has a left (right) inverse $(x,\xi)\mapsto b(x,\xi)$ such that $\left\Vert b(x,\xi)\right\Vert _{\mathcal{B}\left(\mathbb{C}^{q},\mathbb{C}^{p}\right)}\le C\left\langle x\right\rangle ^{-m_{2}}\left\langle \xi\right\rangle ^{-m_{1}}$.
In particular, $q\ge p$ ($q\le p$). A symbol is elliptic iff it
is left and right elliptic. In this case $p=q$ and $\left\Vert a(x,\xi)^{-1}\right\Vert _{\mathcal{B}\left(\mathbb{C}^{q},\mathbb{C}^{p}\right)}\le C\left\langle x\right\rangle ^{-m_{2}}\left\langle \xi\right\rangle ^{-m_{1}}$,
if $\left|\left(x,\xi\right)\right|\ge R$.

The same ellipticity definition holds for $a\in S_{\mu\nu}^{m_{1},m_{2}}(\mathbb{R}^{n}\times\mathbb{R}^{n},\mathcal{B}(\mathbb{C}^{p},\mathbb{C}^{q}))$.
\end{defn}
For a symbol $a\in S^{m_{1},m_{2}}(\mathbb{R}^{n}\times\mathbb{R}^{n},\mathcal{B}(\mathbb{C}^{p},\mathbb{C}^{q}))$,
$\left(m_{1},m_{2}\right)\in\mathbb{R}^{2}$, left ellipticity is
equivalent to the ellipticity of the symbol $a^{*}a\in S^{2m_{1},2m_{2}}(\mathbb{R}^{n}\times\mathbb{R}^{n},\mathcal{B}(\mathbb{C}^{p},\mathbb{C}^{p}))$.
Hence, if $b$ is a parametrix of $a^{*}a$, then $ba^{*}$ is a left
parametrix of $a$. The analogous result holds for right elliptic
symbols.

As usual we define pseudo-differential and regularizing operators
associated with these symbols.
\begin{defn}
For each symbol $a$ in the classes $S^{m_{1},m_{2}}(\mathbb{R}^{n}\times\mathbb{R}^{n})$
and $S_{\mu\nu}^{m_{1},m_{2}}(\mathbb{R}^{n}\times\mathbb{R}^{n})$,
we define a pseudo-differential operator $A=op(a):\mathcal{S}(\mathbb{R}^{n})\to\mathcal{S}(\mathbb{R}^{n})$
by the formula: 
\[
Au(x)=\frac{1}{\left(2\pi\right)^{n}}\int e^{ix\xi}a(x,\xi)\hat{u}(\xi)d\xi,
\]
where $\hat{u}(\xi)=\int e^{-ix\xi}u(x)dx.$ For matrix symbols $a=(a_{ij})$
that belong to $S^{m_{1},m_{2}}(\mathbb{R}^{n}\times\mathbb{R}^{n},\mathcal{B}(\mathbb{C}^{p},\mathbb{C}^{q}))$
and $S_{\mu\nu}^{m_{1},m_{2}}(\mathbb{R}^{n}\times\mathbb{R}^{n},\mathcal{B}(\mathbb{C}^{p},\mathbb{C}^{q}))$,
we define $A=op(a):\mathcal{S}(\mathbb{R}^{n})^{\oplus p}\to\mathcal{S}(\mathbb{R}^{n})^{\oplus q}$
by: 
\begin{equation}
\left(Au\right)_{k}(x)=\sum_{j=1}^{p}\frac{1}{(2\pi)^n}\int e^{ix\xi}a_{kj}(x,\xi)\hat{u}_{j}(\xi)d\xi.\label{eq:def pseudo}
\end{equation}

\end{defn}

\begin{defn}
Let $\theta>1$. A linear continuous operator from $\mathcal{S}{}_{\theta}^{\theta}(\mathbb{R}^{n})^{\oplus p}$
to $\mathcal{S}{}_{\theta}^{\theta}(\mathbb{R}^{n})^{\oplus q}$ is
said to be $\theta-$regularizing operator if it extends to a linear
continuous map from $\mathcal{S}{}_{\theta}^{\theta'}(\mathbb{R}^{n})^{\oplus p}$
to $\mathcal{S}{}_{\theta}^{\theta}(\mathbb{R}^{n})^{\oplus q}$.
\end{defn}
Operators whose kernel is a $q\times p$ matrix with entries in $\mathcal{S}_{\theta}^{\theta}(\mathbb{R}^{n}\times\mathbb{R}^{n})$,
also denoted by $\mathcal{S}_{\theta}^{\theta}(\mathbb{R}^{n}\times\mathbb{R}^{n},\mathcal{B}(\mathbb{C}^{p},\mathbb{C}^{q}))$,
are $\theta-$regularizing operators. 

Let us now give the properties of the pseudo-differential operators
with symbols in $S_{\mu\nu}^{m_{1},m_{2}}(\mathbb{R}^{n}\times\mathbb{R}^{n},\mathcal{B}(\mathbb{C}^{p},\mathbb{C}^{q}))$,
which we shall need. They can be found in \cite[Chapter 6]{Rodinonicola}.
\begin{prop}
\label{pro:continuity in gelfandshilov}Let $a\in S_{\mu\nu}^{m_{1},m_{2}}(\mathbb{R}^{n}\times\mathbb{R}^{n},\mathcal{B}(\mathbb{C}^{p},\mathbb{C}^{q}))$,
$\left(m_{1},m_{2}\right)\in\mathbb{R}^{2}$, $\mu$ and $\nu$ be
real numbers such that $\mu\ge1$, $\nu\ge1$, $p$ and $q$ integers
such that $p\ge1$ and $q\ge1$. For every $\theta\ge\max\left\{ \mu,\nu\right\} $,
the operator defined in \ref{eq:def pseudo} is a linear continuous
operator from $\mathcal{S}_{\theta}^{\theta}(\mathbb{R}^{n})^{\oplus p}$
to $\mathcal{S}_{\theta}^{\theta}(\mathbb{R}^{n})^{\oplus q}$.

Let us define $H^{s_{1},s_{2}}(\mathbb{R}^{n}):=\left\{ u\in\mathcal{S}'(\mathbb{R}^{n});\,\left\langle x\right\rangle ^{s_{2}}\left\langle D\right\rangle ^{s_{1}}u\in L^{2}(\mathbb{R}^{n})\right\} $.
Then the above operator extends to a continuous operator from $H^{s_{1},s_{2}}(\mathbb{R}^{n})^{\oplus p}$
to $H^{s_{1}-m_{1},s_{2}-m_{2}}(\mathbb{R}^{n})^{\oplus q}$ \end{prop}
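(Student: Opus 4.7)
The plan is to reduce to the scalar case $p=q=1$ --- the matrix estimate follows entry by entry --- and then prove the two assertions separately.

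For continuity on $\mathcal{S}_\theta^\theta(\mathbb{R}^n)$, I would estimate $|x^\alpha\partial_x^\beta(Au)(x)|$ directly from the oscillatory integral. The standard scheme is: Leibniz on $\partial_x^\beta(e^{ix\xi}a(x,\xi))$ generates a finite sum of terms $(i\xi)^{\beta-\gamma}e^{ix\xi}\partial_x^\gamma a(x,\xi)$ with $\gamma\le\beta$; using $x^\alpha e^{ix\xi}=(-D_\xi)^\alpha e^{ix\xi}$ and integrating by parts $|\alpha|$ times in $\xi$, a second Leibniz expansion distributes $D_\xi^\alpha$ among $a$, the polynomial $(i\xi)^{\beta-\gamma}$, and $\hat u$; finally, to kill the weight $\langle x\rangle^{m_2-|\gamma|}$ coming from the symbol bound for $D_x^\gamma a$ (which is an issue only when $m_2>0$), I would insert $e^{ix\xi}=\langle x\rangle^{-2N}(1-\Delta_\xi)^N e^{ix\xi}$ and integrate by parts once more, extracting an overall factor $\langle x\rangle^{-2N}$ at the price of $2N$ extra $\xi$-derivatives on $a\hat u$, with $N$ a fixed integer depending only on $m_2$. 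Plugging in the SG symbol estimate together with the Gelfand--Shilov bound $|D_\xi^{\alpha'}\hat u(\xi)|\le C_m D^{|\alpha'|}(\alpha'!)^\theta\langle\xi\rangle^{-m}$ --- available for every $m\in\mathbb{R}$ since the Fourier transform preserves $\mathcal{S}_\theta^\theta(\mathbb{R}^n)$ --- and choosing $m$ large enough to secure $\xi$-integrability, one obtains the required pointwise bound of the form $CD^{|\alpha|+|\beta|}(\alpha!)^\theta(\beta!)^\theta$, using $\mu,\nu\le\theta$ and the elementary $(\alpha_1!)(\alpha_2!)\le\alpha!$ to recombine the factorials.

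The weighted Sobolev statement I would prove in two steps. First, the $L^2$-boundedness of operators with symbols in $S^{0,0}(\mathbb{R}^n\times\mathbb{R}^n)$ is a Calder\'on--Vaillancourt type result standard in the SG calculus and recorded in Chapter~6 of Nicola and Rodino. Second, the symbolic calculus provides parametrices of the order reducers $\langle D\rangle^{s_1}$ and $\langle x\rangle^{s_2}$ modulo regularising remainders, so that $u\in H^{s_1,s_2}(\mathbb{R}^n)$ if and only if $\langle x\rangle^{s_2}\langle D\rangle^{s_1}u\in L^2(\mathbb{R}^n)$; composing, the operator $\langle x\rangle^{s_2-m_2}\langle D\rangle^{s_1-m_1}\,op(a)\,\langle D\rangle^{-s_1}\langle x\rangle^{-s_2}$ has a symbol in $S^{0,0}$ modulo a regularising term and is therefore $L^2$-bounded, which is equivalent to the claimed mapping property of $op(a)$.

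The main obstacle I anticipate is the combinatorial bookkeeping in the first step: arranging the double Leibniz expansions, the $\langle x\rangle^{-2N}$ extraction, and the Gelfand--Shilov decay of $\hat u$ so that all the factorial contributions from the symbol estimate and from $\hat u$ recombine, under the assumption $\mu,\nu\le\theta$, into the sharp growth rate $(\alpha!)^\theta(\beta!)^\theta$ without deteriorating the constants. Once this accounting is in place, convergence of the oscillatory integrals and the uniformity in $x$ follow by dominated convergence.
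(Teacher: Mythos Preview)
The paper does not supply its own proof of this proposition: it simply states the result and refers to \cite[Chapter~6]{Rodinonicola}. Your sketch is correct and is precisely the standard argument one finds there --- direct pointwise estimation of $x^\alpha\partial_x^\beta(Au)$ via Leibniz and integration by parts for the Gelfand--Shilov part, and Calder\'on--Vaillancourt plus order reduction for the weighted Sobolev part --- so you have in fact supplied more detail than the paper itself.
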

\begin{defn}
\label{def:Formal sums}Let us denote by $FS_{\mu\nu}^{m_{1},m_{2}}(\mathbb{R}^{n}\times\mathbb{R}^{n})$
the space of all formal sums $\sum_{j\ge0}a_{j}$, where the functions
$a_{j}\in C^{\infty}(\mathbb{R}^{n}\times\mathbb{R}^{n})$ satisfy
the following condition: There are constants $B$, $C$ and $D>0$
such that
\[
\left|\partial_{\xi}^{\alpha}\partial_{x}^{\beta}a_{j}(x,\xi)\right|\le CD^{\left|\alpha\right|+\left|\beta\right|+2j}\left(\alpha!\right)^{\mu}\left(\beta!\right)^{\nu}\left(j!\right)^{\mu+\nu-1}\left\langle x\right\rangle ^{m_{2}-j-\left|\beta\right|}\left\langle \xi\right\rangle ^{m_{1}-j-\left|\alpha\right|},
\]
whenever $\left\langle x\right\rangle \ge Bj^{\mu+\nu-1}$ or $\left\langle \xi\right\rangle \ge Bj^{\mu+\nu-1}$.
In the same way, by $FS_{\mu\nu}^{m_{1},m_{2}}(\mathbb{R}^{n}\times\mathbb{R}^{n},\mathcal{B}(\mathbb{C}^{p},\mathbb{C}^{q}))$,
we denote the space of all formal sums $\sum_{j\ge0}a_{j}$, where
the functions $a_{j}\in C^{\infty}(\mathbb{R}^{n}\times\mathbb{R}^{n},\mathcal{B}(\mathbb{C}^{p},\mathbb{C}^{q}))$
are such that the formal sum of each of its entry belongs to $FS_{\mu\nu}^{m_{1},m_{2}}(\mathbb{R}^{n}\times\mathbb{R}^{n})$.
\end{defn}

\begin{defn}
\label{def:Asymptotic expansion}We say that $a\in S_{\mu\nu}^{m_{1},m_{2}}(\mathbb{R}^{n}\times\mathbb{R}^{n})$
has the asymptotic expansion $a\sim\sum_{j\ge0}a_{j}$, if $\sum_{j\ge0}a_{j}\in FS_{\mu\nu}^{m_{1},m_{2}}(\mathbb{R}^{n}\times\mathbb{R}^{n})$
and if there exist constants $B$, $C$ and $D>0$ such that
\[
\left|\partial_{\xi}^{\alpha}\partial_{x}^{\beta}\left(a-\sum_{j=0}^{N-1}a_{j}\right)(x,\xi)\right|\le CD^{\left|\alpha\right|+\left|\beta\right|+2N}\left(\alpha!\right)^{\mu}\left(\beta!\right)^{\nu}\left(N!\right)^{\mu+\nu-1}\left\langle x\right\rangle ^{m_{2}-N-\left|\beta\right|}\left\langle \xi\right\rangle ^{m_{1}-N-\left|\alpha\right|},
\]
whenever $\left\langle x\right\rangle \ge BN^{\mu+\nu-1}$ or $\left\langle \xi\right\rangle \ge BN^{\mu+\nu-1}$.
The analogous holds for matricial symbols.\end{defn}
\begin{prop}
Let $\sum_{j\ge0}a_{j}\in FS_{\mu\nu}^{m_{1},m_{2}}(\mathbb{R}^{n}\times\mathbb{R}^{n},\mathcal{B}(\mathbb{C}^{p},\mathbb{C}^{q}))$.
Then there exists a symbol $a\in S_{\mu\nu}^{m_{1},m_{2}}(\mathbb{R}^{n}\times\mathbb{R}^{n},\mathcal{B}(\mathbb{C}^{p},\mathbb{C}^{q}))$
such that $a\sim\sum_{j\ge0}a_{j}$. Moreover if $a$ and $b$ are
two functions that belong to $S_{\mu\nu}^{m_{1},m_{2}}(\mathbb{R}^{n}\times\mathbb{R}^{n},\mathcal{B}(\mathbb{C}^{p},\mathbb{C}^{q}))$
and are such that $a\sim\sum_{j\ge0}a_{j}$ and $b\sim\sum_{j\ge0}a_{j}$,
then $a-b\in\mathcal{S}_{\theta}^{\theta}(\mathbb{R}^{n}\times\mathbb{R}^{n},\mathcal{B}(\mathbb{C}^{p},\mathbb{C}^{q}))$,
that is, $op(a)-op(b)$ is a $\theta$-regularizing operator for any
$\theta\ge\mu+\nu-1$. 
\end{prop}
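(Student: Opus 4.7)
The plan is to prove this by a Borel-style summation argument adapted to the SG-Gelfand-Shilov setting, closely analogous to the one carried out for the $\mathbb{R}^{n}$ Gevrey symbol classes in \cite{Rodinonicola}. For the existence part, I would first fix a sequence of excision functions $\chi_j \in C^\infty(\mathbb{R}^n\times\mathbb{R}^n)$ of Gevrey type $\mu+\nu-1$ with the property that $\chi_j(x,\xi)=0$ when $\langle x\rangle \le B_0 j^{\mu+\nu-1}$ and $\langle\xi\rangle\le B_0 j^{\mu+\nu-1}$, while $\chi_j(x,\xi)=1$ as soon as either $\langle x\rangle\ge 2B_0 j^{\mu+\nu-1}$ or $\langle\xi\rangle\ge 2B_0 j^{\mu+\nu-1}$, with derivative bounds of the form $|\partial_\xi^\alpha\partial_x^\beta \chi_j|\le C_0 D_0^{|\alpha|+|\beta|+2j}(\alpha!)^\mu(\beta!)^\nu(j!)^{\mu+\nu-1}\langle x\rangle^{-|\beta|}\langle\xi\rangle^{-|\alpha|}$. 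Such cutoffs are standard and can be assembled from a single Gevrey bump by rescaling. Choosing $B_0$ suitably larger than the constant $B$ appearing in Definition \ref{def:Formal sums}, I would then set
\[
a(x,\xi) := \sum_{j\ge 0} \chi_j(x,\xi)\,a_j(x,\xi),
\]
which is locally finite at every point of $\mathbb{R}^{2n}$ and hence smooth.

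Next I would verify $a\in S_{\mu\nu}^{m_1,m_2}$ and $a\sim\sum_j a_j$ simultaneously. Writing, for each $N\ge 0$,
\[
a-\sum_{j=0}^{N-1}a_j = \sum_{j=0}^{N-1}(\chi_j-1)a_j + \sum_{j\ge N}\chi_j a_j,
\]
I would estimate the two pieces separately. The first sum is supported in the ``small'' region $\langle x\rangle,\langle\xi\rangle\le 2B_0 j^{\mu+\nu-1}$ for each $j$; combining the formal sum estimates with the Gevrey derivative bounds for $\chi_j-1$ and using the factor $j^{\mu+\nu-1}$ to absorb the powers $\langle x\rangle^{-N-|\beta|}\langle\xi\rangle^{-N-|\alpha|}$ that need to be produced yields the desired bound with constants $C,D$ independent of $N$. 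The second sum is a true $FS$-sum, which I would control by the geometric-series trick: on the region $\langle x\rangle\ge BN^{\mu+\nu-1}$ or $\langle\xi\rangle\ge BN^{\mu+\nu-1}$, each term $\chi_j a_j$ with $j\ge N$ still satisfies the $FS$ estimate, and summing a geometric series in $j$ (using the gain $D^{-2(j-N)}$ after possibly enlarging $B_0$) produces the required remainder bound in Definition \ref{def:Asymptotic expansion}. The case $N=0$ in particular gives $a\in S_{\mu\nu}^{m_1,m_2}$.

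For the uniqueness (second) statement, let $c=a-b$. By linearity $c\sim\sum_j 0$, so for every $N\ge 0$,
\[
\bigl|\partial_\xi^\alpha\partial_x^\beta c(x,\xi)\bigr|\le C D^{|\alpha|+|\beta|+2N}(\alpha!)^\mu(\beta!)^\nu (N!)^{\mu+\nu-1}\langle x\rangle^{m_2-N-|\beta|}\langle\xi\rangle^{m_1-N-|\alpha|}
\]
whenever $\langle x\rangle\ge BN^{\mu+\nu-1}$ or $\langle\xi\rangle\ge BN^{\mu+\nu-1}$. To pass to the Gelfand-Shilov estimate, I would optimize in $N$. Setting $\theta=\mu+\nu-1$ and choosing $N$ to be (up to rounding) the integer part of $\varepsilon\bigl(\langle x\rangle+\langle\xi\rangle\bigr)^{1/\theta}$ for some small $\varepsilon>0$, Stirling's formula converts $D^{2N}(N!)^\theta$ into an exponential that is absorbed against the polynomial gains $\langle x\rangle^{-N}\langle\xi\rangle^{-N}$, giving
\[
\bigl|\partial_\xi^\alpha\partial_x^\beta c(x,\xi)\bigr|\le C' (D')^{|\alpha|+|\beta|}(\alpha!)^\theta(\beta!)^\theta e^{-\varepsilon'\bigl(\langle x\rangle^{1/\theta}+\langle\xi\rangle^{1/\theta}\bigr)},
\]
which places $c$ in $\mathcal{S}_\theta^\theta(\mathbb{R}^n\times\mathbb{R}^n,\mathcal{B}(\mathbb{C}^p,\mathbb{C}^q))$; hence $\mathrm{op}(a)-\mathrm{op}(b)$ is $\theta$-regularizing.

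I expect the main technical obstacle to be the bookkeeping in the existence part: one must choose $B_0$, $D_0$, and the cutoff construction so that the constants coming from differentiating $\chi_j$ (which carry factors $(j!)^{\mu+\nu-1}$) do not spoil the target estimate when summed. The rest is a careful but routine comparison of powers of $\langle x\rangle$, $\langle\xi\rangle$, and factorials, using $j^{\mu+\nu-1}\sim (j!)^{(\mu+\nu-1)/j}$ on the overlap region where both $\chi_j-1$ and the formal-sum bound on $a_j$ are effective.
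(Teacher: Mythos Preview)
Your proposal is correct and follows the standard Borel summation and optimization argument for this class of symbols. Note, however, that the paper does not actually supply its own proof of this proposition: it is stated without proof and attributed to \cite[Chapter 6]{Rodinonicola}. What you have written is precisely the construction carried out there, so your approach coincides with the one the paper implicitly relies on.

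A couple of minor remarks on presentation. First, in the existence part, the derivative bound you state for $\chi_j$ carrying an explicit factor $(j!)^{\mu+\nu-1}$ is unnecessary: a single Gevrey bump rescaled to the region $\{\langle x\rangle,\langle\xi\rangle\le B_0 j^{\mu+\nu-1}\}$ already has derivatives controlled by powers of $j^{-(\mu+\nu-1)}$ on the transition zone, and the factorial in $j$ only enters once you combine this with the formal-sum estimate for $a_j$. Second, in the tail estimate you should make explicit that on $\mathrm{supp}\,\chi_j$ one has $\max(\langle x\rangle,\langle\xi\rangle)\ge B_0 j^{\mu+\nu-1}$, and since $\langle x\rangle,\langle\xi\rangle\ge 1$ this gives $\langle x\rangle^{-(j-N)}\langle\xi\rangle^{-(j-N)}\le (B_0 j^{\mu+\nu-1})^{-(j-N)}$; together with $(j!/N!)^{\mu+\nu-1}\le j^{(\mu+\nu-1)(j-N)}$ this produces the geometric factor $(D^2/B_0)^{j-N}$ you need, provided $B_0>D^2$. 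Third, in the uniqueness part your choice $N\sim\varepsilon(\langle x\rangle+\langle\xi\rangle)^{1/\theta}$ is legitimate because the asymptotic condition only requires $\max(\langle x\rangle,\langle\xi\rangle)\ge BN^{\theta}$, and $\langle x\rangle+\langle\xi\rangle\le 2\langle x\rangle\langle\xi\rangle$ is what converts $(N!)^{\theta}\langle x\rangle^{-N}\langle\xi\rangle^{-N}$ into the desired exponential decay. These points are exactly the ``bookkeeping'' you anticipated, and none of them is a genuine obstacle.
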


\begin{prop}
\label{prop:composicao e adjuntos}Let $\mu>1$ and $\nu>1$. Let
us consider the symbols $a\in S_{\mu\nu}^{m_{1},m_{2}}(\mathbb{R}^{n}\times\mathbb{R}^{n},\mathcal{B}(\mathbb{C}^{p},\mathbb{C}^{q}))$,
$\left(m_{1},m_{2}\right)\in\mathbb{R}^{2}$, and $b\in S_{\mu\nu}^{m_{1}',m_{2}'}(\mathbb{R}^{n}\times\mathbb{R}^{n},\mathcal{B}(\mathbb{C}^{r},\mathbb{C}^{p}))$,
$\left(m_{1}',m_{2}'\right)\in\mathbb{R}^{2}$. Then, for $\theta\ge\mu+\nu-1$:

(i) There is a symbol $c\in S_{\mu\nu}^{m_{1}+m_{1}',m_{2}+m_{2}'}(\mathbb{R}^{n}\times\mathbb{R}^{n},\mathcal{B}(\mathbb{C}^{r},\mathbb{C}^{q}))$,
also denoted by $a\sharp b$, and a $\theta-$regularizing operator
$R:\mathcal{S}_{\theta}^{\theta'}(\mathbb{R}^{n})^{\oplus r}\to\mathcal{S}_{\theta}^{\theta}(\mathbb{R}^{n})^{\oplus q}$,
where $\theta\ge\mu+\nu-1$, such that $op(c)=op(a)\circ op(b)+R$.
Its symbol has the following asymptotic expansion
\[
a\#b\sim\sum_{\alpha}\frac{1}{\alpha!}\partial_{\xi}^{\alpha}aD_{x}^{\alpha}b.
\]

(ii) There is a symbol $c\in S^{m_{1},m_{2}}(\mathbb{R}^{n}\times\mathbb{R}^{n},\mathcal{B}(\mathbb{C}^{q},\mathbb{C}^{p}))$,
also denoted $a^{*}$, and a $\theta-$regularizing operator $R:\mathcal{S}_{\theta}^{\theta'}(\mathbb{R}^{n})^{\oplus q}\to\mathcal{S}_{\theta}^{\theta}(\mathbb{R}^{n})^{\oplus p}$,
where $\theta\ge\mu+\nu-1$, such that, for all $u$, $v$ $\in\mathcal{S}(\mathbb{R}^{n})$,
the following equality holds
\[
\left(op(a)u,v\right)_{L^{2}(\mathbb{R}^{n})^{\oplus q}}=\left(u,op(a^{*})v\right)_{L^{2}(\mathbb{R}^{n})^{\oplus p}}+\left(u,Rv\right)_{L^{2}(\mathbb{R}^{n})^{\oplus p}}.
\]

The symbol $a^{*}$ has the following asymptotic expansion
\[
a^{*}\sim\sum_{\alpha}\frac{1}{\alpha!}\partial_{\xi}^{\alpha}D_{x}^{\alpha}\overline{a}.
\]

\end{prop}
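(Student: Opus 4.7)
The plan is to derive, for part (i), the symbol $c$ of the composition $op(a)\circ op(b)$ as an oscillatory integral, Taylor-expand it in the frequency variable to generate the claimed asymptotic series, and then invoke the realization result of the preceding proposition to convert the formal sum into a genuine symbol modulo $\theta$-regularizing operators. Starting from $(op(a)\circ op(b))u$ on $u\in\mathcal{S}(\mathbb{R}^n)^{\oplus r}$ and interchanging the order of integration, one formally arrives at
\[
c(x,\xi)=\frac{1}{(2\pi)^n}\iint e^{-iy\eta}\,a(x,\xi+\eta)\,b(x+y,\xi)\,dy\,d\eta.
\]
Taylor-expanding $a(x,\xi+\eta)$ in $\eta$ around the origin up to order $N$, combined with the identity $\eta^\alpha e^{-iy\eta}=D_y^\alpha e^{-iy\eta}$ and integration by parts in $y$, turns the truncated part into $\sum_{|\alpha|<N}\frac{1}{\alpha!}\partial_\xi^\alpha a(x,\xi)\,D_x^\alpha b(x,\xi)$ plus an oscillatory remainder $r_N(x,\xi)$.

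The next step is to verify that the formal sum $\sum_\alpha\frac{1}{\alpha!}\partial_\xi^\alpha a\,D_x^\alpha b$ lies in $FS_{\mu\nu}^{m_1+m_1',m_2+m_2'}(\mathbb{R}^n\times\mathbb{R}^n,\mathcal{B}(\mathbb{C}^r,\mathbb{C}^q))$ as in Definition \ref{def:Formal sums}. This is a Leibniz-rule calculation that uses the SG estimates for $a$ and $b$ together with the combinatorial bound $\sum_{|\alpha|=j}\frac{1}{\alpha!}(\alpha!)^{\mu}(\alpha!)^{\nu}\le C^{j}(j!)^{\mu+\nu-1}$; the constraint $\langle x\rangle\ge Bj^{\mu+\nu-1}$ or $\langle\xi\rangle\ge Bj^{\mu+\nu-1}$ appearing in that definition is precisely what absorbs the Gevrey growth of the $j$-th term. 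Once membership is established, the preceding proposition produces a symbol $c\in S_{\mu\nu}^{m_1+m_1',m_2+m_2'}$ with $a\sharp b\sim\sum_\alpha\frac{1}{\alpha!}\partial_\xi^\alpha a\,D_x^\alpha b$. To conclude that $op(a)\circ op(b)-op(c)$ is $\theta$-regularizing, one chooses the truncation level $N\sim\min(\langle x\rangle,\langle\xi\rangle)^{1/(\mu+\nu-1)}$, uses the asymptotic estimates of Definition \ref{def:Asymptotic expansion} to show that the kernel of the difference has entries decaying like $C\,e^{-\epsilon(\langle x\rangle^{1/\theta}+\langle\xi\rangle^{1/\theta})}$, and then verifies that such a kernel is indeed in $\mathcal{S}_\theta^\theta(\mathbb{R}^n\times\mathbb{R}^n)$.

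Part (ii) is handled by the identical scheme applied to the oscillatory integral representation of the adjoint symbol,
\[
a^*(x,\xi)=\frac{1}{(2\pi)^n}\iint e^{-iy\eta}\,\overline{a(x+y,\xi+\eta)}\,dy\,d\eta,
\]
where the conjugation is understood entrywise with the implicit matrix transpose that makes $a^*$ a symbol with values in $\mathcal{B}(\mathbb{C}^q,\mathbb{C}^p)$. A double Taylor expansion in $y$ and $\eta$ yields the series $\sum_\alpha\frac{1}{\alpha!}\partial_\xi^\alpha D_x^\alpha\overline a$, whose membership in the appropriate formal-sum class and whose remainder control follow line by line from part (i).

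The main obstacle will be the careful bookkeeping of the Gelfand-Shilov constants throughout the Taylor remainders: each integration by parts introduces factorial factors that must be balanced against the Gevrey denominators, and the whole argument depends on rigorously justifying the oscillatory integrals by repeated integration by parts in $y$ and $\eta$ at a rate proportional to $\min(\langle x\rangle,\langle\xi\rangle)^{1/(\mu+\nu-1)}$. It is precisely this balance that forces the constraint $\theta\ge\mu+\nu-1$, and since the statement is essentially the one in \cite[Chapter 6]{Rodinonicola}, my intention is to rely on the quantitative factorial estimates developed there rather than to reprove them from scratch.
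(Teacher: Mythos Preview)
The paper does not supply its own proof of this proposition; it is simply listed among the facts that ``can be found in \cite[Chapter 6]{Rodinonicola}''. Your proposal sketches the standard oscillatory-integral/Taylor-expansion argument and explicitly defers the hard quantitative estimates to that same reference, so there is no discrepancy to discuss: your outline is correct and is precisely the approach of the cited source that the paper relies on.
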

We finally state the following important regularity result:
\begin{thm}
\label{thm:Regularity} Let $a\in S_{\mu\nu}^{m_{1},m_{2}}(\mathbb{R}^{n}\times\mathbb{R}^{n},\mathcal{B}(\mathbb{C}^{p},\mathbb{C}^{q}))$,
$\mu>1$, $\nu>1$, be a left (right) elliptic symbol. Then there
is a symbol called left (right) parametrix $b\in S_{\mu\nu}^{-m_{1},-m_{2}}(\mathbb{R}^{n}\times\mathbb{R}^{n},\mathcal{B}(\mathbb{C}^{q},\mathbb{C}^{p}))$
such that $op(b)op(a)=I+R$ $(op(a)op(b)=I+R)$, where $R:\mathcal{S}_{\theta}^{\theta'}(\mathbb{R}^{n})^{\oplus p}\to\mathcal{S}_{\theta}^{\theta}(\mathbb{R}^{n})^{\oplus p}$
$(R:\mathcal{S}_{\theta}^{\theta'}(\mathbb{R}^{n})^{\oplus q}\to\mathcal{S}_{\theta}^{\theta}(\mathbb{R}^{n})^{\oplus q})$
is a $\theta$-regularizing operator and $\theta$ is any number that
satisfies $\theta\ge\mu+\nu-1$.

In particular, if $a$ is a left elliptic symbol, $u\in\mathcal{S}_{\theta}^{\theta'}(\mathbb{R}^{n})^{\oplus p}$
and $op(a)u\in\mathcal{S}_{\theta}^{\theta}(\mathbb{R}^{n})^{\oplus q}$
for $\theta\ge\mu+\nu-1$, then $u\in\mathcal{S}_{\theta}^{\theta}(\mathbb{R}^{n})^{\oplus p}$.
\end{thm}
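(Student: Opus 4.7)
The plan is to follow the classical parametrix construction, adapted to the $SG_{\mu\nu}$ calculus. The right elliptic case reduces to the left elliptic case by taking adjoints via Proposition \ref{prop:composicao e adjuntos}(ii), since the adjoint of a right elliptic symbol is left elliptic. Moreover, as remarked just after Definition \ref{SG ellipticity}, if $a$ is left elliptic then $a^*a \in S_{\mu\nu}^{2m_{1},2m_{2}}$ is two-sided elliptic, and once one produces a two-sided parametrix $b$ of $a^*a$, the composition $b\,a^*$ is a left parametrix of $a$. So the core task is to construct a parametrix of a (square) elliptic symbol $a\in S_{\mu\nu}^{m_{1},m_{2}}$ with $p=q$.

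For this I build a formal sum $\sum_{j\ge 0} b_{j} \in FS_{\mu\nu}^{-m_{1},-m_{2}}$. Choose a cutoff $\chi\in C^{\infty}(\mathbb{R}^{2n})$ with $\chi=0$ for $|(x,\xi)|\le R$ and $\chi=1$ for $|(x,\xi)|\ge 2R$; on $\mbox{supp}(\chi)$ the inverse $a(x,\xi)^{-1}$ exists by ellipticity and satisfies the symbol estimates of Definition \ref{def:SG-symbols}, as follows from Fa\`a di Bruno's formula applied to $a^{-1}$ together with the Gevrey-type bounds on $a$. Set $b_{0}(x,\xi) = \chi(x,\xi)\,a(x,\xi)^{-1}\in S_{\mu\nu}^{-m_{1},-m_{2}}$. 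Then define $b_{N}$ for $N\ge 1$ recursively so as to match the asymptotic expansion of $b\#a$ from Proposition \ref{prop:composicao e adjuntos}(i) to the identity order by order, that is, impose
\[
\sum_{j+|\alpha|=N}\frac{1}{\alpha!}\,\partial_{\xi}^{\alpha} b_{j}\, D_{x}^{\alpha} a = 0, \qquad N\ge 1,
\]
which determines $b_{N}$ in terms of $b_{0},\ldots,b_{N-1}$ after multiplying on the right by $b_{0}$.

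The main obstacle is to verify that $\{b_{j}\}$ lies in $FS_{\mu\nu}^{-m_{1},-m_{2}}$, i.e.\ satisfies the estimate of Definition \ref{def:Formal sums} uniformly in $j$. This is proved by induction on $j$, applying Leibniz's rule to the defining recursion together with the symbol estimates of $a$, $b_{0}$ and $b_{1},\ldots,b_{j-1}$. The delicate combinatorial point is that the sums over multi-indices produce factorials that must be absorbed into the factor $(j!)^{\mu+\nu-1}$; this is exactly where the hypothesis $\mu,\nu>1$ and the restriction to the region $\langle x\rangle\ge B j^{\mu+\nu-1}$ or $\langle\xi\rangle\ge B j^{\mu+\nu-1}$ enter. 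Once the formal sum estimate is in hand, the realization proposition following Definition \ref{def:Asymptotic expansion} produces a genuine $b\in S_{\mu\nu}^{-m_{1},-m_{2}}$ with $b\sim\sum_{j} b_{j}$. Proposition \ref{prop:composicao e adjuntos}(i) then gives a composition symbol $c=b\#a$ whose asymptotic expansion, by the very construction of the $b_{j}$, has order-zero term equal to $I$ and all higher-order terms equal to zero; hence $c-I\in\mathcal{S}_{\theta}^{\theta}(\mathbb{R}^{n}\times\mathbb{R}^{n},\mathcal{B}(\mathbb{C}^{p},\mathbb{C}^{p}))$ for any $\theta\ge\mu+\nu-1$, so $op(b)\,op(a)=I+R$ with $R$ a $\theta$-regularizing operator.

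For the regularity consequence, let $u\in\mathcal{S}_{\theta}^{\theta'}(\mathbb{R}^{n})^{\oplus p}$ with $op(a)u\in\mathcal{S}_{\theta}^{\theta}(\mathbb{R}^{n})^{\oplus q}$. By Proposition \ref{pro:continuity in gelfandshilov}, $op(b)$ maps $\mathcal{S}_{\theta}^{\theta}(\mathbb{R}^{n})^{\oplus q}$ continuously into $\mathcal{S}_{\theta}^{\theta}(\mathbb{R}^{n})^{\oplus p}$, so $op(b)\,op(a)u\in\mathcal{S}_{\theta}^{\theta}(\mathbb{R}^{n})^{\oplus p}$. Moreover $Ru\in\mathcal{S}_{\theta}^{\theta}(\mathbb{R}^{n})^{\oplus p}$ by the $\theta$-regularizing property applied to $u\in\mathcal{S}_{\theta}^{\theta'}$. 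Since $u=op(b)\,op(a)u-Ru$, we conclude $u\in\mathcal{S}_{\theta}^{\theta}(\mathbb{R}^{n})^{\oplus p}$, completing the plan.
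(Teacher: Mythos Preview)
The paper does not actually prove Theorem \ref{thm:Regularity}; it is stated in Section~2 together with Propositions \ref{pro:continuity in gelfandshilov}--\ref{prop:composicao e adjuntos} under the sentence ``They can be found in \cite[Chapter 6]{Rodinonicola}.'' So there is no in-paper proof to compare against, only the cited source. Your sketch is essentially the standard parametrix construction carried out in that reference: reduce one-sided ellipticity to the square case via $a^{*}a$ (exactly the remark after Definition \ref{SG ellipticity}), build the formal inverse $\sum_{j} b_{j}$ by the usual recursion coming from $b\#a\sim I$, check the $FS_{\mu\nu}$ estimates by induction, realize the formal sum, and read off the regularity statement from $u=op(b)op(a)u-Ru$.

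One small but genuine slip: you take the cutoff $\chi\in C^{\infty}(\mathbb{R}^{2n})$ and then assert $b_{0}=\chi\,a^{-1}\in S_{\mu\nu}^{-m_{1},-m_{2}}$. That fails unless $\chi$ itself satisfies Gevrey-type bounds; a merely smooth $\chi$ would destroy the factorial control in Definition \ref{def:SG-symbols}. You must choose $\chi$ in $G^{\min\{\mu,\nu\}}$ (compare the paper's own choice in the discussion preceding Definition \ref{def:Assumption A}). With that correction, and the observation that $b_{0}a-I=(\chi-1)I$ has compact support in $(x,\xi)$ and is Gevrey, hence lies in $\mathcal{S}_{\theta}^{\theta}(\mathbb{R}^{2n})$, your argument goes through and matches the approach of \cite{Rodinonicola}.
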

Note that if $a$ is a left elliptic symbol of a differential operator,
then $\mu=1$. Hence if $u\in\mathcal{S}_{\theta}^{\theta'}(\mathbb{R}^{n})^{\oplus p}$
is such that $op(a)u\in\mathcal{S}_{\theta}^{\theta}(\mathbb{R}^{n})^{\oplus q}$
for $\theta>\nu$, then $u\in\mathcal{S}_{\theta}^{\theta}(\mathbb{R}^{n})^{\oplus p}$.

\section{Regularity Results\label{sec:Regularity-Results}}

\subsection{Elliptic SG boundary value problems on the complement of compact
sets.}

In this section we prove regularity in Gelfand-Shilov spaces of solutions
of SG boundary value problems on the complement of compact sets, as
introduced by C. Parenti \cite[Section 3]{Parenti}.

Let $U$ be a bounded open set such that its boundary $\Gamma=\partial U$
is a Gevrey $(n-1)$-manifold of order $\Theta>1$, $U$ being locally
on one side of $\Gamma$. Let $\Omega=\mathbb{R}^{n}\backslash\overline{U}$.
We consider the following boundary value problem:

\[
\begin{array}{c}
Pu=f,\,\,\,\mbox{in}\,\Omega\\
B^{j}u=g_{j},\,\,\,\mbox{on}\,\Gamma,\, j=1,2,...,r
\end{array},
\]
where:

\vspace{0,2 cm}

a) $P(x,D)=\sum_{|\alpha|\le m_{1}}a_{\alpha}(x)D_{x}^{\alpha}$ is
a differential operator on $\mathbb{R}^{n}$ and $m_{1}=2r$. We assume
that the functions $a_{\alpha}\in C^{\infty}(\mathbb{R}^{n})$ satisfy
the following estimates for some $\nu\ge1$
\[
\left|\partial_{x}^{\beta}a_{\alpha}(x)\right|\le CD^{\left|\beta\right|}\left(\beta!\right)^{\nu}\left\langle x\right\rangle ^{m_{2}-\left|\beta\right|},\,\forall x\in\mathbb{R}^{n}.
\]

Hence the function $a\in C^{\infty}(\mathbb{R}^{n}\times\mathbb{R}^{n})$
given by $a\left(x,\xi\right)=\sum_{|\alpha|\le m_{1}}a_{\alpha}(x)\xi^{\alpha}$
belongs to $S_{1\nu}^{m_{1},m_{2}}(\mathbb{R}^{n}\times\mathbb{R}^{n})$

\vspace{0,2 cm}

b) For each $j=1,...,r$, we associate an integer number $0\le m_{1j}\le m_{1}-1$
and $m_{2j}\in\mathbb{R}$. Let $B=\left(B_{j,k}\right),\,\,\mbox{with}\,\,\,1\le j\le r\,\,\,\mbox{and}\,\,\,0\le k\le m_{1}-1,$
be a matrix, where $B_{j,k}$ is a differential operator of order
$m_{1j}-k$ on $\Gamma$ whose coefficients are Gevrey functions of
order $\Theta$. We assume also that $B_{j,k}=0$, if $k>m_{1j}$.
For each $u\in\mathcal{S}(\Omega)$, we define $\gamma\left(u\right):=\left(\gamma_{0}\left(u\right),...,\gamma_{m_{1}-1}\left(u\right)\right)$,
where $\gamma_{j}$ is defined using the charts of Definition \ref{def:Gevrey submanifolds}:
$\gamma_{j}\left(u\right)\circ\psi^{-1}(x):=\lim_{x_{n}\to0^{+}}D_{x_{n}}^{j}\left(u\circ\psi^{-1}\right)(x,x_{n})$.
The derivative $D_{x_{n}}$ is the one associated with the field $\nu$,
again as in Definition \ref{def:Gevrey submanifolds}. The operator
$B^{j}:\mathcal{S}(\Omega)\to C^{\infty}(\Gamma)^{\oplus r}$ is defined
as $B^{j}u=\sum_{k=0}^{m_{1}-1}B_{j,k}(x',D')\gamma_{k}\left(u\right),\,\,\, j=1,2,...,r$.

\vspace{0,2 cm}

c) The symbol $a$ is SG-elliptic, properly elliptic in the classical
sense and the boundary value problem satisfies the usual Lopatinski-Shapiro
condition at the boundary. We recall the definition below.

\vspace{0,2 cm}

d) There is a $\theta>\nu$ and $\theta\ge\Theta$ such that the functions
$g_{j}$ are Gevrey functions of order $\theta$ in $\Gamma$ and
$f\in\mathcal{S}_{\theta}^{\theta}(\Omega)$.
\begin{defn}
1) Let us define $a_{(m_{1})}\left(x,\xi\right)=\sum_{|\alpha|=m_{1}}a_{\alpha}(x)\xi^{\alpha}$.
We say that the function $a$ is properly elliptic in the classical
sense if it is elliptic - it is non zero if $\xi\ne0$ - and for all
$x\in\Gamma$, $\xi_{1}$ and $\xi_{2}$ linearly independent vectors
in $\mathbb{R}^{n}$, the polynomial $z\in\mathbb{C}\mapsto a_{(m_{1})}\left(x,\xi_{1}+z\xi_{2}\right)$
has exactly $r=\frac{m_{1}}{2}$ roots with positive imaginary part
- and, hence, $r$ roots with negative imaginary part. We denote these
roots by $\tau_{1}(x,\xi_{1},\xi_{2})$, ..., $\tau_{r}(x,\xi_{1},\xi_{2})$
and we set $a_{(m_{1})}^{+}(x,\xi_{1},\xi_{2})\left(z\right):=\prod_{j=1}^{r}\left(z-\tau_{j}(x,\xi_{1},\xi_{2})\right)$.

2) Let us write $B^{j}u(x)=\sum_{\left|\alpha\right|\le m_{1j}}b_{\alpha}^{j}(x)D^{\alpha}$,
for $x\in\Gamma$. We define the polynomials 
\[
b_{(m_{1j})}^{j}(x,\xi,\xi')(z):=\sum_{\left|\alpha\right|=m_{1j}}b_{\alpha}^{j}(x)\left(\xi+z\xi'\right)^{\alpha},
\]
where $x\in\Gamma$, $\xi$ is tangent to $\Gamma$ and $\xi'$ is
normal to $\Gamma$. The boundary value problem satisfies the classical
Lopatinski-Shapiro (or covering) condition if, for all $x\in\Gamma$,
$\xi\ne0$ tangent to $\Gamma$ and $\xi'\ne0$ normal to $\Gamma$,
the polynomials $z\in\mathbb{C}\mapsto b_{(m_{1j})}^{j}(x,\xi,\xi')(z)$
are linearly independent modulo $z\in\mathbb{C}\mapsto a_{(m_{1})}^{+}(x,\xi,\xi')\left(z\right)$
.
\end{defn}
Let us recall the classical result about Gevrey regularity of elliptic
boundary value problems:
\begin{thm}
\label{thm:lions magenes}(Theorem 1.3 of J. Lions and E. Magenes
\cite{lionsmagenes3}) Let $0<\rho_{0}<1$ be a fixed constant and
let us consider the following boundary value problem in $B_{\rho_{0}}(0)\cap\mathbb{R}_{+}^{n}$:
\[
\begin{array}{c}
Pu=f,\,\,\,\mbox{in}\, B_{\rho_{0}}(0)\cap\mathbb{R}_{+}^{n}\\
B^{j}u=g_{j},\,\,\,\mbox{on}\, B_{\rho_{0}}(0)\cap\partial\left(\mathbb{R}_{+}^{n}\right),\, j=1,2,...,r
\end{array},
\]
where:

1) $P(x,D)u=\sum_{\left|\alpha\right|\le m_{1}}a_{\alpha}(x)D^{\alpha}u(x)$,
$m_{1}=2r$, is a properly elliptic operator on $B_{\rho_{0}}(0)\cap\partial\left(\mathbb{R}_{+}^{n}\right)$
and $a_{\alpha}$ are restrictions of Gevrey functions defined on
$\mathbb{R}^{n}$ of order $\beta>1$ to $B_{\rho_{0}}(0)\cap\mathbb{R}_{+}^{n}$.

2) $B^{j}u(x')=\sum_{\alpha\le m_{1j}}b_{j\alpha}(x')D^{\alpha}u(x',0)$
are $r$ boundary operators and $b_{j\alpha}$ are Gevrey functions
of order $\beta$ defined on $B_{\rho_{0}}(0)\cap\partial\left(\mathbb{R}_{+}^{n}\right)$.

If $u\in C^{\infty}\left(\overline{B_{\rho_{0}}(0)\cap\mathbb{R}_{+}^{n}}\right)$,
$f$ and $g_{j}$, $1\le j\le r$, are Gevrey functions of order $\beta$
and the above boundary value problem satisfies the classical Lopatinski-Shapiro
condition, then there is a $\rho'<\rho_{0}$, such that $\left.u\right|_{B_{\rho'}(0)\cap\mathbb{R}_{+}^{n}}$
is the restriction of a Gevrey function of order $\beta$ defined
on $\mathbb{R}^{n}$ to $B_{\rho'}(0)\cap\mathbb{R}_{+}^{n}$.
\end{thm}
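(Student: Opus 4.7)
The plan is to combine standard elliptic a priori estimates in Sobolev spaces with a Kotake--Narasimhan-type iteration that tracks factorial powers of derivatives. First I would establish the basic Sobolev a priori estimate for the properly elliptic boundary value problem satisfying the Lopatinski--Shapiro condition: for $v$ compactly supported in $B_{\rho}(0)\cap\overline{\mathbb{R}_+^n}$ and any $s\ge 0$,
$$\|v\|_{H^{s+m_1}}\le C\Bigl(\|Pv\|_{H^s}+\sum_{j=1}^r\|B^jv\|_{H^{s+m_1-m_{1j}-1/2}(\partial\mathbb{R}_+^n)}+\|v\|_{L^2}\Bigr).$$
This is the standard consequence of the covering condition and is what makes the whole argument local.

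Next, fix a shrinking sequence of balls $\rho_0>\rho_1>\rho_2>\cdots>\rho'$ with $\rho_k-\rho_{k+1}\sim c/k^{\beta}$, and smooth cutoffs $\phi_k$ equal to $1$ on $B_{\rho_{k+1}}$, supported in $B_{\rho_k}$, with $\|D^\gamma\phi_k\|_\infty$ controlled in terms of powers $k^{\beta|\gamma|}$. I would prove by induction on $|\alpha'|$ the tangential Gevrey-type estimate
$$\|D^{\alpha'} u\|_{L^2(B_{\rho_{|\alpha'|}}\cap\mathbb{R}_+^n)}\le AB^{|\alpha'|}(\alpha'!)^\beta,\qquad \alpha'=(\alpha_1,\dots,\alpha_{n-1},0).$$
The inductive step applies the a priori estimate above to $\phi_k D^{\alpha'}u$: the principal term $\phi_k D^{\alpha'}Pu$ is handled by the Gevrey hypothesis on $f$, while the commutators $[P,\phi_k D^{\alpha'}]u$ expand by Leibniz into sums over smaller multi-indices, with coefficients combining Gevrey-bounded derivatives of $a_\alpha$ with the cutoff derivatives. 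The boundary terms $B^j(\phi_k D^{\alpha'} u)$ are treated analogously using the Gevrey hypothesis on the $g_j$ and on the coefficients $b_{j\alpha}$, noting that $D^{\alpha'}$ is tangential and therefore commutes cleanly with the trace on $\{x_n=0\}$.

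Normal derivatives are then recovered from tangential ones: proper ellipticity forces the coefficient $a_{(m_1,0,\dots,0)}(x)$ of $D_{x_n}^{m_1}$ in $P_{(m_1)}(x,0,\dots,0,\xi_n)$ to be nonzero, so the equation $Pu=f$ can be solved for $D_{x_n}^{m_1}u$ as an algebraic combination of $f$ and derivatives of lower normal order, together with tangential derivatives. Iterating this identity in $x_n$ and plugging in the tangential Gevrey bound yields the required Gevrey estimate on all derivatives $D^\alpha u$ in a slightly smaller ball, which by Remark \ref{rem:extensoes de funcoes gevrey} is equivalent to being the restriction of a global Gevrey function of order $\beta$.

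The main obstacle is the combinatorics of the induction: the constants $A,B$ must be chosen so that the Leibniz sums $\sum_{\gamma\le\alpha'}\binom{\alpha'}{\gamma}(\gamma!)^\beta((\alpha'-\gamma)!)^\beta$ close up into $(\alpha'!)^\beta$ multiplied by a manageable factor, and the geometric loss of room $\rho_k\to\rho_{k+1}$ has to absorb precisely the factor $k^\beta$ produced by each derivative of the cutoff. Balancing these two losses against each other, and checking that the induction closes uniformly in $\alpha'$, is the heart of the Kotake--Narasimhan scheme as adapted to the boundary in \cite{lionsmagenes3}; everything else is bookkeeping.
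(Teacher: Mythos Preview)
The paper does not give its own proof of this theorem: it is quoted verbatim as ``Theorem 1.3 of J.~Lions and E.~Magenes \cite{lionsmagenes3}'' and then \emph{used} as a black box in the proof of Theorem~\ref{thm:Teorema principal no complemento de abertos}. So there is nothing in the paper to compare your argument against; you have actually supplied more than the paper does.

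That said, your outline is the standard Kotake--Narasimhan scheme adapted to a half-ball, and it is essentially the route taken in \cite{lionsmagenes3}. The three ingredients --- the local Sobolev a~priori estimate coming from proper ellipticity plus the covering condition, the nested-ball induction on tangential derivatives with carefully chosen Gevrey cutoffs, and the algebraic recovery of normal derivatives from the equation --- are exactly the ones that drive the original proof. Two small remarks: the coefficient of the top normal derivative is $a_{(0,\dots,0,m_1)}$ in the paper's convention (the normal direction is $x_n$, not $x_1$); and you will want the a~priori estimate at a \emph{fixed} Sobolev level (say $H^{m_1}$ versus $L^2$), since the whole point of the iteration is to avoid letting the Sobolev index grow with $|\alpha'|$. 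Your choice of spacing $\rho_k-\rho_{k+1}\sim c/k^{\beta}$ is correct for $\beta>1$ and does make the series $\sum k^{-\beta}$ converge, so the induction terminates in a ball of positive radius. The combinatorial closing you flag as the ``main obstacle'' is genuine but classical; once it is done, Remark~\ref{rem:extensoes de funcoes gevrey} indeed gives the extension to a global Gevrey function.
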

We finally prove our main result on the complement of compact sets:
\begin{thm}
\label{thm:Teorema principal no complemento de abertos}(Main Theorem
on the complement of compact sets) Let $u\in H^{s_{1},s_{2}}(\Omega)$,
the space of restrictions of distributions in $H^{s_{1},s_{2}}(\mathbb{R}^{n})$
to $\Omega$, $s_{1}\ge m_{1}$, be a solution of 
\[
\begin{array}{c}
Pu=f,\,\,\,\mbox{in}\,\,\Omega\\
B^{j}u=g_{j},\,\,\,\mbox{on}\,\,\Gamma,\, j=1,2,...,r
\end{array},
\]
where the boundary value problem satisfies the conditions $a$, $b$,
$c$ and $d$. Then $u\in\mathcal{S}_{\theta}^{\theta}(\Omega)$.\end{thm}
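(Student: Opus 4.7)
The plan is to split the problem into two overlapping regions: a bounded neighborhood of the compact boundary $\Gamma$, where classical Gevrey regularity (Theorem \ref{thm:lions magenes}) is available, and the exterior of a bounded set, where a Gevrey cut-off reduces the problem to the global SG setting handled by Theorem \ref{thm:Regularity}. The two are joined via a Gevrey partition of unity, exploiting that on a bounded set the $x^{\alpha}$ weights defining $\mathcal{S}_{\theta}^{\theta}$ are uniformly bounded, so Gevrey order $\theta$ regularity there coincides with $\mathcal{S}_{\theta}^{\theta}$ regularity.

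Near $\Gamma$, I first use classical $C^{\infty}$ elliptic regularity up to the boundary to upgrade $u\in H^{s_{1},s_{2}}(\Omega)$ with $s_{1}\geq m_{1}$ to $u\in C^{\infty}(\overline{W\cap\Omega})$ for every bounded open set $W$; this uses smoothness of $f$ and $g_{j}$, classical ellipticity of $P$ and the Lopatinski-Shapiro condition. Pulling back through the charts $\psi_{j}$ from Definition \ref{def:Gevrey submanifolds}, I apply Theorem \ref{thm:lions magenes} with $\beta=\theta$: the transported coefficients of $P$ and $B^{j}$ are Gevrey of order $\max\{\nu,\Theta\}\leq\theta$, the transported data $f\circ\psi_{j}^{-1}$ and $g_{j}\circ\psi_{j}^{-1}$ are Gevrey of order $\theta$ (using $\mathcal{S}_{\theta}^{\theta}(\Omega)\subset G^{\theta}(\Omega)$), and both proper ellipticity and Lopatinski-Shapiro are preserved by Gevrey diffeomorphisms. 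A finite cover of $\Gamma$ then produces a bounded open $V\supset\Gamma$ and, via Remark \ref{rem:extensoes de funcoes gevrey}, a Gevrey function $\widetilde{u}_{V}$ of order $\theta$ on $\mathbb{R}^{n}$ with $\widetilde{u}_{V}=u$ on $V\cap\Omega$.

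Next I fix a Gevrey cut-off $\chi\in G^{\theta}(\mathbb{R}^{n})$, $0\leq\chi\leq 1$, with $\chi\equiv 0$ on a neighborhood of $\overline{U}$ contained in $V$ and $\chi\equiv 1$ outside a larger compact set, so that the transition region $\{0<\chi<1\}$ is relatively compact and contained in $V\cap\Omega$. Set $v:=\chi u$, extended by zero on $U$; then $v\in H^{s_{1},s_{2}}(\mathbb{R}^{n})\subset\mathcal{S}_{\theta}^{\theta'}(\mathbb{R}^{n})$, and
\[
Pv=\chi f+[P,\chi]u.
\]
A Leibniz computation shows that $\chi f$, extended by zero, lies in $\mathcal{S}_{\theta}^{\theta}(\mathbb{R}^{n})$, because $\chi$ has bounded Gevrey derivatives of order $\theta$ and $f\in\mathcal{S}_{\theta}^{\theta}(\Omega)$. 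The commutator $[P,\chi]$ is a differential operator of order at most $m_{1}-1$ whose coefficients are supported in the transition region, where $u=\widetilde{u}_{V}$ is Gevrey of order $\theta$; hence $[P,\chi]u$ is a compactly supported Gevrey function of order $\theta$, which trivially belongs to $\mathcal{S}_{\theta}^{\theta}(\mathbb{R}^{n})$. Thus $Pv\in\mathcal{S}_{\theta}^{\theta}(\mathbb{R}^{n})$. Since $P$ has SG-elliptic symbol in $S_{1,\nu}^{m_{1},m_{2}}$ and $\theta>\nu$, the differential-operator remark following Theorem \ref{thm:Regularity} forces $v\in\mathcal{S}_{\theta}^{\theta}(\mathbb{R}^{n})$. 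Combining $u=v$ on $\{\chi\equiv 1\}$ with $u=\widetilde{u}_{V}$ on the bounded set $V\cap\Omega$, and noting that on the latter the $x^{\alpha}$ weights are uniformly bounded so that Gevrey order $\theta$ regularity yields the required $\mathcal{S}_{\theta}^{\theta}$ estimates, I conclude $u\in\mathcal{S}_{\theta}^{\theta}(\Omega)$. The only point requiring care is the Leibniz bookkeeping for the product $\chi f$ and the observation that a compactly supported Gevrey function of order $\theta$ lies in $\mathcal{S}_{\theta}^{\theta}(\mathbb{R}^{n})$; both are routine once the geometry of the cut-off is arranged.
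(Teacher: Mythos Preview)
Your proposal is correct and follows essentially the same strategy as the paper: obtain local Gevrey regularity of order $\theta$ near $\Gamma$ via Theorem \ref{thm:lions magenes} in boundary charts, glue to a Gevrey function $\widetilde{u}_{V}$ coinciding with $u$ on $V\cap\Omega$, use a Gevrey cut-off $\chi$ vanishing near $\overline{U}$ to reduce to a global SG equation on $\mathbb{R}^{n}$, and then invoke Theorem \ref{thm:Regularity}. The only cosmetic difference is that you write $P(\chi u)=\chi f+[P,\chi]u$ and treat the two terms separately, whereas the paper writes $P(\chi u)=f-P((1-\chi)\widetilde{u})$ and observes this vanishes near $\Gamma$; both lead to the same extension-by-zero argument.
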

\begin{proof}
As $f$ belongs to $\mathcal{S}(\Omega)=\cap_{\left(s_{1},s_{2}\right)\in\mathbb{R}^{2}}H^{s_{1},s_{2}}(\Omega)$
and $g_{j}$ belongs to $C^{\infty}(\Gamma)=\cap_{s\in\mathbb{R}}H^{s}(\Gamma)$,
for all $j$, we conclude that $u\in\mathcal{S}(\Omega)$, according
to C. Parenti \cite[Section 3]{Parenti}. For each $x\in\Gamma$,
there exists a neighborhood $\mathcal{O}_{x}\subset\mathbb{R}^{n}$
of $x$, $r_{x}>0$ and a Gevrey diffeomorphism $\psi_{x}:\mathcal{O}_{x}\to B_{r_{x}}(0)$
of order $\Theta$, as in Definition \ref{def:Gevrey submanifolds}.
Due to Theorem \ref{thm:lions magenes}, there is a ball $B_{\rho_{x}}(0)\subset B_{r_{x}}(0)$,
$\rho_{x}<1$, such that $u\circ\psi_{x}^{-1}:B_{\rho_{x}}(0)\cap\mathbb{R}_{+}^{n}\to\mathbb{C}$
is the restriction of a Gevrey function $\tilde{u}_{x}:\mathbb{R}^{n}\to\mathbb{C}$
of order $\theta$. Let us choose $x_{1}$, ..., $x_{N}$ such that
$\Gamma\subset\cup_{j=1}^{N}\psi_{x_{j}}^{-1}\left(B_{\rho_{x_{j}}}(0)\right)$.
Let us also choose Gevrey functions with compact support $\chi_{1}$,
..., $\chi_{N}$ of order $\theta$ taking values on $[0,1]$ and
such that $\sum_{j=1}^{N}\chi_{j}=1$ in a neighborhood of $\Gamma$
and that $\mbox{supp}(\chi_{j})\subset\psi_{x_{j}}^{-1}\left(B_{\rho_{x_{j}}}(0)\right)$.
Let us define
\[
\tilde{u}(x)=\sum_{j=1}^{N}\chi_{j}(x)\tilde{u}_{x_{j}}\circ\psi_{x_{j}}(x).
\]

This is a Gevrey function of order $\theta$. Moreover, there is a
bounded neighborhood of $\Gamma$, $V\subset\mathbb{R}^{n}$, such
that $\left.\tilde{u}\right|_{V\cap\Omega}=\left.u\right|_{V\cap\Omega}$.
Let us now choose a Gevrey function of order $\theta$, $\chi:\mathbb{R}^{n}\to\mathbb{C}$,
such that $\chi(x)=1$ in a neighborhood of the complement of $V\cup\Omega$
and $\chi(x)=0$ in a neighborhood of $\overline{\Omega}$. Hence,
on $\Omega$, we have $P\left(\chi u\right)=f-P\left(\left(1-\chi\right)\tilde{u}\right)$.
However $f-P\left(\left(1-\chi\right)\tilde{u}\right)$ is zero in
a neighborhood of $\Gamma$ and $P\left(\left(1-\chi\right)\tilde{u}\right)$
is a Gevrey function of order $\theta$ with compact support. This
means that $f-P\left(\left(1-\chi\right)\tilde{u}\right)$ can be
extended to a Gelfand-Shilov function in $\mathcal{S}_{\theta}^{\theta}(\mathbb{R}^{n})$
- we only have to extend by zero on $\overline{U}$. As $\theta>\nu$,
we conclude, using Theorem \ref{thm:Regularity}, that $\chi u\in\mathcal{S}_{\theta}^{\theta}(\mathbb{R}^{n})$.
As $u$ is the restriction of the function $\chi u+\left(1-\chi\right)\tilde{u}$
to $\Omega$, we conclude that $u\in\mathcal{S}_{\theta}^{\theta}(\Omega)$.
\end{proof}

\subsection{Elliptic SG boundary value problems on the half-space.}

In this section we prove regularity in Gelfand-Shilov spaces of solutions
of SG boundary value problems on the half-space, as introduced by
H. O. Cordes and A. K. Erkip \cite{Cordeserkip,Erkipcommunications,Erkiplecturenotes}.

We consider the following boundary value problem:

\[
\begin{array}{c}
Pu=f,\,\,\,\mbox{in}\,\,\mathbb{R}_{+}^{n}\\
B^{j}u=g_{j},\,\,\,\mbox{on}\,\,\mathbb{R}^{n-1},\, j=1,2,...,r
\end{array},
\]
where:

\vspace{0,2 cm}

a) $P(x,D)=\sum_{|\alpha|\le m_{1}}a_{\alpha}(x)D_{x}^{\alpha}$ is
a differential operator on $\mathbb{R}^{n}$ and $m_{1}=2r$. We assume
that the functions $a_{\alpha}\in C^{\infty}(\mathbb{R}^{n})$ satisfy
the following estimates for $\nu\ge1$
\[
\left|\partial_{x}^{\beta}a_{\alpha}(x)\right|\le CD^{\left|\beta\right|}\left(\beta!\right)^{\nu}\left\langle x\right\rangle ^{m_{2}-\left|\beta\right|}.
\]

Hence the function $a\in C^{\infty}(\mathbb{R}^{n}\times\mathbb{R}^{n})$
given by $a\left(x,\xi\right)=\sum_{|\alpha|\le m_{1}}a_{\alpha}(x)\xi^{\alpha}$
belongs to $S_{1\nu}^{m_{1},m_{2}}(\mathbb{R}^{n}\times\mathbb{R}^{n})$

\vspace{0,2 cm}

b) For each $j=1,...,r$, we associate an integer $0\le m_{1j}\le m_{1}-1$
and $m_{2j}\in\mathbb{R}$. Let $B=\left(B_{j,k}\right),\,\,\mbox{with}\,\,\,1\le j\le r\,\,\,\mbox{and}\,\,\,0\le k\le m_{1}-1$
be a matrix, where $B_{j,k}$ is a pseudo-differential operator, whose
symbol belongs to $S_{1\nu}^{m_{1j}-k,m_{2j}}\left(\mathbb{R}^{n-1}\times\mathbb{R}^{n-1}\right)$,
$j=1,...,r$. We assume also that $B_{j,k}=0$, if $k>m_{1j}$. For
each $u\in\mathcal{S}(\mathbb{R}_{+}^{n})$, we define $\gamma\left(u\right):=\left(\gamma_{0}\left(u\right),...,\gamma_{m_{1}-1}\left(u\right)\right)$,
where $\gamma_{j}\left(u\right):=\lim_{x_{n}\to0^{+}}\left(D_{x_{n}}^{j}u\right)(x,x_{n})$.
The operator $B^{j}:\mathcal{S}(\mathbb{R}_{+}^{n})\to\mathcal{S}(\mathbb{R}^{n-1})^{\oplus r}$
is defined as 
\[
B^{j}u=\sum_{k=0}^{m_{1}-1}B_{j,k}(x',D')\gamma_{k}\left(u\right),\,\,\, j=1,2,...,r.
\]

\vspace{0,2 cm}

c) The boundary value problem is SG elliptic, as defined below.

\vspace{0,2 cm}

d) There is a $\theta>\nu$ such that $g_{j}\in\mathcal{S}_{\theta}^{\theta}(\mathbb{R}^{n-1})$,
$\forall j$, and $f\in\mathcal{S}_{\theta}^{\theta}(\mathbb{R}_{+}^{n})$.
\begin{defn}
We say that the above boundary value problem is SG elliptic if it
satisfies the following conditions \cite{Erkipcommunications,Erkiplecturenotes}:

1) Let us define the function $a_{(x',\xi')}(z):=\left\langle x'\right\rangle ^{-m_{2}}\left\langle \xi'\right\rangle ^{-m_{1}}a\left(x',0,\xi',\left\langle \xi'\right\rangle z\right)$.
The function $a$ is $SG$-properly elliptic: it is $SG$-elliptic
as in Definition \ref{SG ellipticity} and there is an $R>0$ such
that, for $\left|(x,\xi)\right|\ge R$, the polynomial $z\in\mathbb{C}\mapsto a_{(x',\xi')}(z)$
has exactly $r$ roots with positive imaginary part - and $r$ roots
with negative imaginary part. We denote these roots by $\tau_{1}(x',\xi')$,
..., $\tau_{r}(x',\xi')$ and we set $a_{(x',\xi')}^{+}(z):=\prod_{j=1}^{r}\left(z-\tau_{j}(x',\xi')\right)$.

2) Let us define the polynomials $b_{(x',\xi')}^{j}(z):=\sum_{k=0}^{m_{1}-1}B_{j,k}(x',\xi')\left\langle x'\right\rangle ^{-m_{2j}}\left\langle \xi'\right\rangle ^{k-m_{1j}}z^{k}$.
The boundary value problem satisfies the SG-Lopatinski-Shapiro (or
covering) condition: there exists $R>0$ such that if $\left|(x',\xi')\right|\ge R$,
then the polynomials $b_{(x',\xi')}^{j}(z)$ are uniformly and linearly
independent modulo $a_{(x',\xi')}^{+}(z)$. This means that $b_{(x',\xi')}^{j}(z)=\tilde{b}_{(x',\xi')}^{j}(z)\,\mbox{mod}\, a_{(x',\xi')}^{+}(z)$,
where $\tilde{b}_{(x',\xi')}^{j}(z)=\sum_{k=0}^{r-1}\tilde{b}_{(x',\xi')}^{j,k}z^{k}$,
and for $\left|(x',\xi')\right|\ge R$, there exists a constant $C>0$,
independent of $(x',\xi')$, such that $\left|\det\left(\tilde{b}_{(x',\xi')}^{j,k}(z)\right)\right|\ge C$.\end{defn}
\begin{example}
Let us consider the Dirichlet problem:
\[
\begin{array}{c}
Pu=f,\,\,\,\mbox{in}\,\,\mathbb{R}_{+}^{n}\\
\left\langle x'\right\rangle ^{m_{2j}}\gamma^{j-1}\left(u\right)=g_{j},\,\,\,\mbox{on}\,\,\mathbb{R}^{n-1},\, j=1,2,...,r
\end{array},
\]
where $P(x,D)=\sum_{\left|\alpha\right|\le m_{1}}a_{\alpha}(x)D^{\alpha}$,
$m_{1}=2r$, is a $SG$-properly elliptic differential operator. Then
$b_{(x',\xi')}^{j}(z)=\tilde{b}_{(x',\xi')}^{j}(z)=z^{j}$ and $\tilde{b}_{(x',\xi')}^{j,k}=\delta_{jk}$.
This clearly satisfies the SG-Lopatinski-Shapiro condition.\end{example}
\begin{rem}
It is very important to note, as in \cite{Erkipcommunications,Erkiplecturenotes},
that, if $a\in S^{m_{1},m_{2}}\left(\mathbb{R}^{n}\times\mathbb{R}^{n}\right)$
is an SG elliptic symbol of a differential operator, then there is
a constant $D>0$ that does not depend on $\left(x,\xi'\right)$,
such that the roots of the polynomial $z\in\mathbb{C}\mapsto a\left(x,\xi',z\right)$
satisfy $\left|z\right|\le D\left\langle \xi'\right\rangle $, for
all $\left(x,\xi'\right)$. If we write $a\left(x,\xi',z\right)=\sum_{j=0}^{m_{1}}P_{j}(x,\xi')z^{j}$,
then $P_{j}\in S^{m_{1}-j,m_{2}}\left(\mathbb{R}^{n}\times\mathbb{R}^{n-1}\right)$
and $P_{m_{1}}$ only depends on $x$. By the ellipticity assumption,
$\left|P_{m_{1}}(x)\right|\ge C\left\langle x\right\rangle ^{m_{1}}$
for a constant $C>0$ that does not depend on $x$. The result follows
then easily from the simple fact that the roots of a polynomial $P(z)=\sum_{j=0}^{N}P_{j}z^{j}$
belong to the ball of radius $\max_{j}\left\{ \left(N\left|\frac{P_{j}}{P_{N}}\right|\right)^{\frac{1}{N-j}}\right\} $. 

The Theorem below is our main result on the half-space.\end{rem}
\begin{thm}
\label{thm:teorema principal semi plano} (Main Theorem on the half-space)
Let $u\in H^{s_{1},s_{2}}(\mathbb{R}_{+}^{n})$, the space of restrictions
of distributions in $H^{s_{1},s_{2}}(\mathbb{R}^{n})$ to $\mathbb{R}_{+}^{n}$,
$s_{1}\ge m_{1}$, be a solution of 
\begin{equation}
\begin{array}{c}
Pu=f,\,\,\,\mbox{in}\,\,\mathbb{R}_{+}^{n}\\
B^{j}u=g_{j},\,\,\,\mbox{on}\,\,\mathbb{R}^{n-1},\, j=1,2,...,r
\end{array},\label{eq:Equacao principal no semi plano}
\end{equation}
where the boundary value problem satisfies the conditions $a$, $b$,
$c$ and $d$. Then $u\in\mathcal{S}_{\theta}^{\theta}(\mathbb{R}_{+}^{n})$.
\end{thm}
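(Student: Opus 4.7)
The plan is to reduce the half-space boundary value problem to an SG-elliptic pseudo-differential system on $\mathbb{R}^{n-1}$ (via Calderón projectors) plus a global interior computation on $\mathbb{R}^{n}$, so that Theorem \ref{thm:Regularity} can be applied at each stage. A first and easy reduction uses the classical SG-elliptic regularity theory of \cite{Erkipcommunications,Erkiplecturenotes,Parenti}: since $f\in\mathcal{S}_{\theta}^{\theta}(\mathbb{R}_{+}^{n})\subset\mathcal{S}(\mathbb{R}_{+}^{n})$ and $g_{j}\in\mathcal{S}_{\theta}^{\theta}(\mathbb{R}^{n-1})\subset\mathcal{S}(\mathbb{R}^{n-1})$, the hypothesis $u\in H^{s_{1},s_{2}}(\mathbb{R}_{+}^{n})$ with $s_{1}\ge m_{1}$ may be upgraded to $u\in\mathcal{S}(\mathbb{R}_{+}^{n})$, so that in particular the Cauchy data $\gamma_{k}(u)$, $k=0,\dots,m_{1}-1$, exist as Schwartz functions on $\mathbb{R}^{n-1}$. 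Using Theorem \ref{thm:teo de extensao de func gelf shilov} I also fix an extension $\tilde{f}\in\mathcal{S}_{\theta}^{\theta}(\mathbb{R}^{n})$ of $f$.

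I now adapt the Calderón projector construction to the SG framework. Let $E$ be a left parametrix of $P$ produced by Theorem \ref{thm:Regularity} (applicable since $a\in S_{1\nu}^{m_{1},m_{2}}$ is SG-elliptic), and set $u_{0}=r^{+}(E\tilde{f})$, so that $Pu_{0}=f$ in $\mathbb{R}_{+}^{n}$ modulo a $\theta$-regularizing error. Writing $u=u_{0}+w$, the remainder $w$ satisfies $Pw\equiv0$ in $\mathbb{R}_{+}^{n}$ and its Cauchy data $\phi=\gamma(w)=\gamma(u)-\gamma(u_{0})$ is characterized, up to $\theta$-regularizing operators, by the Calderón identity $C\phi=\phi$, where $C$ is the Calderón projector of $P$ on $\mathbb{R}_{+}^{n}$. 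Combined with the boundary condition, this produces the system $BC\phi=g-B\gamma(u_{0})$ on $\mathbb{R}^{n-1}$, which is SG-elliptic by the SG-Lopatinski–Shapiro condition. The symbols of $C$ and of a Poisson-type reconstruction operator $K$ expressing $w$ from $\phi$ are built from the factorization of the indicial polynomial $a_{(x',\xi')}^{+}(z)$ and fit into SG classes $S_{\mu\nu}^{*,*}$ on $\mathbb{R}^{n-1}$ compatible with the constraint $\theta\ge\mu+\nu-1$ of Theorem \ref{thm:Regularity}.

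The main technical obstacle is to verify that all of these operations remain in the Cappiello–Gramchev–Rodino Gelfand-Shilov SG calculus, which amounts to establishing a transmission property in the spirit of L. B. de Monvel \cite{boutetMonvel} for Gelfand-Shilov functions. Concretely, I need: (i) $r^{+}E\tilde{f}\in\mathcal{S}_{\theta}^{\theta}(\mathbb{R}_{+}^{n})$ together with traces $\gamma_{k}(r^{+}E\tilde{f})\in\mathcal{S}_{\theta}^{\theta}(\mathbb{R}^{n-1})$, and (ii) that the Poisson operator $K$ maps $\mathcal{S}_{\theta}^{\theta}(\mathbb{R}^{n-1})^{\oplus m_{1}}$ into $\mathcal{S}_{\theta}^{\theta}(\mathbb{R}_{+}^{n})$. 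The verification of (i) and (ii) is the substantive analytic content of the half-space case, and I expect it to require a careful direct analysis of the oscillatory integrals defining $E$ and $K$, in the spirit of Section \ref{sub:The behaviour of pseudo}, exploiting the exponential decay of Gelfand-Shilov functions.

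Granting (i) and (ii), the remainder of the argument is formal: $g-B\gamma(u_{0})$ lies in $\mathcal{S}_{\theta}^{\theta}(\mathbb{R}^{n-1})^{\oplus r}$, Theorem \ref{thm:Regularity} applied on $\mathbb{R}^{n-1}$ to the SG-elliptic operator $BC$ yields $\phi\in\mathcal{S}_{\theta}^{\theta}(\mathbb{R}^{n-1})^{\oplus m_{1}}$, the Poisson representation gives $w=K\phi\in\mathcal{S}_{\theta}^{\theta}(\mathbb{R}_{+}^{n})$, and therefore $u=u_{0}+w\in\mathcal{S}_{\theta}^{\theta}(\mathbb{R}_{+}^{n})$, completing the proof.
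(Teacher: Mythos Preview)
Your overall architecture matches the paper's: reduce to a pseudo-differential system on $\mathbb{R}^{n-1}$ via a Calder\'on-type construction, invoke the SG-Lopatinski--Shapiro condition to obtain left ellipticity, apply Theorem \ref{thm:Regularity} there, and push the result back to $\mathbb{R}_{+}^{n}$ via a Poisson operator and the transmission property. The technical ingredients you isolate as (i) and (ii) are precisely Propositions \ref{pro:gelf shilov transm property} and \ref{thm:Poisson}. One minor difference: the paper does not build $C$ and $K$ from the factorization $a^{+}_{(x',\xi')}$, but from the parametrix $Q=op(b)$ of $P$ together with the jump identity $P(e^{+}u)=e^{+}Pu+\tilde{P}\gamma(u)$; this bypasses any need to control the regularity of the individual roots $\tau_{j}(x',\xi')$ and places the relevant symbols directly under assumption (A).

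There is, however, a genuine gap in your boundary reduction. The operator $BC$ has symbol mapping $\mathbb{C}^{m_{1}}\to\mathbb{C}^{r}$, and since $m_{1}=2r>r$ it cannot be left elliptic in the sense of Definition \ref{SG ellipticity}: no linear map from a larger space to a smaller one admits a left inverse. The Lopatinski--Shapiro condition only says that $B$ is uniformly injective on the \emph{range} of $C$ (an $r$-dimensional subspace of $\mathbb{C}^{m_{1}}$), which is not a statement to which Theorem \ref{thm:Regularity} applies directly. The paper handles this by keeping the Calder\'on relation as additional equations rather than substituting it away: after normalization the system reads
\[
\begin{pmatrix} I-\overline{Q} \\ \overline{B} \end{pmatrix}U=\begin{pmatrix} F \\ G \end{pmatrix},
\]
a $\tfrac{3}{2}m_{1}\times m_{1}$ operator whose left ellipticity is exactly what the SG-Lopatinski--Shapiro condition yields \cite[Lemma 2]{Erkipcommunications}. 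Your argument becomes correct once you replace the single equation $BC\phi=g-B\gamma(u_{0})$ by the stacked system $(I-C)\phi=0$, $B\phi=g-B\gamma(u_{0})$, and apply Theorem \ref{thm:Regularity} to that.
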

In order to prove that, we use a very classical pseudo-differential
approach. We follow closely the ideas of L. Hörmander \cite{Hormanderboudary}
and the presentation of J. Chazarain and A. Piriou \cite{ChazarainPiriou}.
First it is necessary to study the behaviour of a subclass of $SG$-pseudo-differential
operators near the boundary.

\subsubsection{\label{sub:The behaviour of pseudo}The behaviour of SG pseudo-differential
operators near the boundary.}

In this section we will always assume that $\mu>1$ and $\nu>1$. 

We are mainly concerned with the behaviour of parametrices of SG elliptic
differential operators. Let us start studying this case in order to
clarify our assumptions.

Let $a\in S_{1\nu}^{m_{1},m_{2}}(\mathbb{R}^{n}\times\mathbb{R}^{n})$
be an SG elliptic differential symbol. Hence there are constants $C>0$,
$R>0$ and $r>0$ such that $\left|a(x,\xi)\right|\ge C\left\langle x\right\rangle ^{m_{2}}\left\langle \xi\right\rangle ^{m_{1}}$,
for all $\left|\left(x,\xi\right)\right|\ge R$, and such that all
the roots of the polynomial $z\in\mathbb{C}\mapsto a(x,\xi',z)$ lie
in some ball of radius $r\left\langle \xi'\right\rangle $.

Using the definition of $S_{1\nu}^{m_{1},m_{2}}(\mathbb{R}^{n}\times\mathbb{R}^{n})$,
it is clear that, if $\left|\left(x,\xi\right)\right|\ge R$, then
there exist constants $C>0$ and $D>0$ such that
\[
\left|\partial_{x}^{\beta}\partial_{\xi}^{\alpha}\left(\frac{1}{a(x,\xi)}\right)\right|\le CD^{\left|\alpha\right|+\left|\beta\right|}\alpha!\left(\beta!\right)^{\nu}\left\langle x\right\rangle ^{-m_{2}-\left|\beta\right|}\left\langle \xi\right\rangle ^{-m_{1}-\left|\alpha\right|}.
\]

If $\tau_{1}(x,\xi')$, ..., $\tau_{m_{1}}(x,\xi')$ are the roots
of the polynomial $z\in\mathbb{C}\mapsto a(x,\xi',z)$, then $a(x,\xi',z)=\tilde{a}(x,\xi')\Pi_{j=1}^{m_{1}}\left(z-\tau_{j}(x,\xi')\right)$,
for some function $\tilde{a}$. Using the SG ellipticity property
for $a(x,\xi',0)$ and the fact that $\left|\tau_{j}(x,\xi')\right|\le r\left\langle \xi'\right\rangle $
for all $j$, we conclude that there exists a constant $\tilde{C}>0$
such that, for all $\left|\left(x,\xi'\right)\right|\ge R$, $\left|\tilde{a}(x,\xi')\right|\ge\tilde{C}\left\langle x\right\rangle ^{m_{2}}$.

Let $\tilde{R}\ge r$. Then if $z=\tilde{R}\left\langle \xi'\right\rangle e^{i\theta}$,
then $\left|z-\tau_{j}(x,\xi')\right|\ge\left(\tilde{R}-r\right)\left\langle \xi'\right\rangle $,
for all $j$. Hence there exists a constant $C>0$ such that 
\[
\left|a\left(x,\xi',\tilde{R}\left\langle \xi'\right\rangle e^{i\theta}\right)\right|\ge C\left\langle x\right\rangle ^{m_{2}}\left\langle \xi'\right\rangle ^{m_{1}},
\]
for all $\left|\left(x,\xi'\right)\right|\ge R$. As $a\left(x,\xi',\tilde{R}\left\langle \xi'\right\rangle e^{i\theta}\right)$
is always different from zero, the above inequality holds for all
$(x,\xi')$ for some constant $C>0$. We conclude that there exist
$C>0$ and $D>0$ such that 
\[
\left|\left(\partial_{x}^{\beta}\partial_{\xi}^{\alpha}\frac{1}{a}\right)\left(x,\xi',\tilde{R}\left\langle \xi'\right\rangle e^{i\theta}\right)\right|\le CD^{\left|\alpha\right|+\left|\beta\right|}\alpha!\left(\beta!\right)^{\nu}\left\langle x\right\rangle ^{-m_{2}-\left|\beta\right|}\left\langle \xi'\right\rangle ^{-m_{1}-\left|\alpha\right|},\,\,\forall(x,\xi')\in\mathbb{R}^{n}\times\mathbb{R}^{n-1}.
\]

If $b\in S_{\mu\nu}^{-m_{1},-m_{2}}(\mathbb{R}^{n}\times\mathbb{R}^{n})$,
$\mu>1$, is a parametrix of $a$ and $\chi\in G^{\min\{\mu,\nu\}}(\mathbb{R}^{n}\times\mathbb{R}^{n})$
is a function that is zero, if $\left|(x,\xi)\right|\le R$, and equal
to $1$, if $\left|(x,\xi)\right|\ge2R$, then $b\sim\sum_{j=0}^{\infty}b_{-m_{1}-j,-m_{2}-j}$,
where
\[
b_{-m_{1},-m_{2}}=\chi\frac{1}{a},
\]
\[
b_{-m_{1}-j,-m_{2}-j}=\sum_{k+\left|\alpha\right|=j,\, k<j}\frac{1}{\alpha!}\left(D_{\xi}^{\alpha}b_{-m_{1}-k,-m_{2}-k}\right)\left(\partial_{x}^{\alpha}a\right)b_{-m_{1},-m_{2}},\,\,\, j\ge1.
\]

Using the above estimates, we conclude that $b$ must satisfy the
following conditions, which we will call assumption (A):
\begin{defn}
\label{def:Assumption A} We say that a symbol $a\in S_{\mu\nu}^{m_{1},m_{2}}(\mathbb{R}^{n}\times\mathbb{R}^{n})$,
$m_{1}\in\mathbb{Z}$ and $m_{2}\in\mathbb{R}$, satisfies the assumption
(A) if there are rational functions of $\xi$, $(x,\xi)\mapsto a_{m_{1}-j,m_{2}-j}(x,\xi)$,
and constants $C>0$, $D>0$, $B>0$ and $r>0$, with $B>r>0$, such
that

\vspace{0,2 cm}

1) For all $\left|(x,\xi)\right|\ge B$, the following holds: 
\[
\left|\left(\partial_{x}^{\beta}\partial_{\xi}^{\alpha}a_{m_{1}-j,m_{2}-j}\right)(x,\xi)\right|\le CD^{\left|\alpha\right|+\left|\beta\right|+2j}\left(j!\beta!\right)^{\nu}\alpha!\left\langle x\right\rangle ^{m_{2}-j-\left|\beta\right|}\left\langle \xi\right\rangle ^{m_{1}-j-\left|\alpha\right|}.
\]

\vspace{0,2 cm}

2) If $z_{0}$ is the pole of the function $z\in\mathbb{C}\mapsto a_{m_{1}-j,m_{2}-j}(x,\xi',z)$,
then $\left|z_{0}\right|\le r\left\langle \xi'\right\rangle $. If
this pole is real, $z_{0}\in\mathbb{R}$, then $\left|\left(x,\xi',z_{0}\right)\right|\le B$.
Moreover, for all $(x,\xi')\in\mathbb{R}^{n}\times\mathbb{R}^{n-1}$:
\[
\left|\left(\partial_{x}^{\beta}\partial_{\xi}^{\alpha}a_{m_{1}-j,m_{2}-j}\right)\left(x,\xi',B\left\langle \xi'\right\rangle e^{i\theta}\right)\right|\le CD^{\left|\alpha\right|+\left|\beta\right|+2j}\left(j!\beta!\right)^{\nu}\alpha!\left\langle x\right\rangle ^{m_{2}-j-\left|\beta\right|}\left\langle \xi'\right\rangle ^{m_{1}-j-\left|\alpha\right|}.
\]

\vspace{0,2 cm}

3) For each $M\in\mathbb{N}_{0}$ and $\left(x,\xi\right)\in\mathbb{R}^{n}\times\mathbb{R}^{n}$
such that $\max\left\{ \left\langle x\right\rangle ,\left\langle \xi\right\rangle \right\} \ge BM^{\mu+\nu-1}$,
we define the function 
\[
r_{m_{1}-M,m_{2}-M}(x,\xi):=a(x,\xi)-\sum_{j=0}^{M-1}a_{m_{1}-j,m_{2}-j}(x,\xi).
\]

These functions satisfy, for $\left(x,\xi\right)\in\mathbb{R}^{n}\times\mathbb{R}^{n}$
such that $\max\left\{ \left\langle x\right\rangle ,\left\langle \xi\right\rangle \right\} \ge BM^{\mu+\nu-1}$,
the estimate: 
\[
\left|\partial_{x}^{\beta}\partial_{\xi}^{\alpha}r_{m_{1}-M,m_{2}-M}(x,\xi)\right|\le CD^{\left|\alpha\right|+\left|\beta\right|+2M}\left(\alpha!\right)^{\mu}\left(\beta!\right)^{\nu}\left(M!\right)^{\mu+\nu-1}\left\langle x\right\rangle ^{m_{2}-M-\left|\beta\right|}\left\langle \xi\right\rangle ^{m_{1}-M-\left|\alpha\right|}.
\]

In particular, let $\chi\in G^{\min\{\mu,\nu\}}(\mathbb{R}^{n}\times\mathbb{R}^{n})$
be a function that is zero, if $\left|(x,\xi)\right|\le B$, and equal
to $1$, if $\left|(x,\xi)\right|\ge2B$, then $\sum_{j=0}^{\infty}\chi a_{m_{1}-j,m_{2}-j}\in FS_{\mu\nu}^{m_{1},m_{2}}(\mathbb{R}^{n}\times\mathbb{R}^{n})$
and $a\sim\sum_{j=0}^{\infty}\chi a_{m_{1}-j,m_{2}-j}$.\end{defn}
\begin{rem}
If $a\in S_{\mu\nu}^{m_{1},m_{2}}(\mathbb{R}^{n}\times\mathbb{R}^{n})$
and $b\in S_{\mu\nu}^{m_{1}',m_{2}'}(\mathbb{R}^{n}\times\mathbb{R}^{n})$
satisfy our assumptions, then $op(a)op(b)=op(t)+R$, where $R$ is
a $\theta-$regularizing operator for $\theta\ge\mu+\nu-1$ and $t\in S_{\mu\nu}^{m_{1}+m_{1}',m_{2}+m_{2}'}(\mathbb{R}^{n}\times\mathbb{R}^{n})$
satisfies our assumptions. This follows from the fact that if $a\sim\sum_{j=0}^{\infty}a_{m_{1}-j,m_{2}-j}$
and $b\sim\sum_{j=0}^{\infty}b_{m_{1}'-j,m_{2}'-j}$, then 
\[
t\sim\sum_{l=0}^{\infty}\left[\sum_{j+k+\left|\alpha\right|=l}\frac{1}{\alpha!}\left(\partial_{\xi}^{\alpha}a_{m_{1}-j,m_{2}-j}\right)\left(D_{x}^{\alpha}b_{m_{1}'-k,m_{2}'-k}\right)\right].
\]

\end{rem}
There are some important consequences of the assumption (A). Let us
define three paths in $\mathbb{C}$. The first one is defined as
\[
\Gamma_{\xi'}:=\left\{ B\left\langle \xi'\right\rangle e^{i\theta};\,0\le\theta\le\pi\right\} .
\]

The second one is defined as
\[
\Gamma_{\xi',M}:=\gamma_{1}\cup\Gamma_{\xi'}\cup\gamma_{2},
\]
where $\gamma_{1}$ is the real line that starts at $BM^{\mu+\nu-1}$
and ends at $B\left\langle \xi'\right\rangle $ and $\gamma_{2}$
is the real line that starts at $-B\left\langle \xi'\right\rangle $
and ends at  $-BM^{\mu+\nu-1}$.

The third path we are going to use is
\[
\Gamma_{\left(\xi'\right),M}:=\gamma_{3}\cup\Gamma_{\xi'}\cup\gamma_{4},
\]
where $\gamma_{3}$ is the real line that starts at $\sqrt{B^{2}M^{2\left(\mu+\nu-1\right)}-\left\langle \xi'\right\rangle ^{2}}$,
if it is a real number, or at $0$, if it is not real, and ends at
 $B\left\langle \xi'\right\rangle $. The curve $\gamma_{4}$ is the
real line that starts at $-B\left\langle \xi'\right\rangle $ and
ends at  $-\sqrt{B^{2}M^{2\left(\mu+\nu-1\right)}-\left\langle \xi'\right\rangle ^{2}}$,
if it is a real number, or at $0$, if it is not real.

Now let us suppose that $v\in\mathcal{S}'(\mathbb{R}^{n})$ is a distribution
such that $\left(\xi',\xi_{n}\right)\mapsto\hat{v}(\xi)$ extends
to a continuous function from $\mathbb{R}^{n-1}\times\mathbb{H}$
to $\mathbb{C}$ and $\xi_{n}\in\overset{\circ}{\mathbb{H}}\mapsto\hat{v}(\xi',\xi_{n})$
is holomorphic for each $\xi'$ fixed. Suppose that $\left|\xi'^{\alpha'}\hat{v}(\xi',\xi_{n})\right|$ is bounded,
for all $\alpha'\in\mathbb{N}_{0}^{n-1}$ and all $(\xi',\xi_{n})\in\mathbb{R}^{n-1}\times\mathbb{H}$.
This is the case of $v=e^{-}u$, where $u\in\mathcal{S}(\mathbb{R}_{-}^{n})$,
and of $v=u\otimes\delta$, where $u\in\mathcal{S}(\mathbb{R}^{n-1})$
and $\delta$ is the delta distribution in $x_{n}$. If $a$ is a
symbol that satisfies the assumption (A), we can use Cauchy Theorem
to show that, for $M\ge m_{1}+n+1$ and $x_{n}>0$: 
\[
op(a)v(x)=
\]
\[
\frac{1}{(2\pi)^n}\int_{\left\langle \xi\right\rangle \le BM^{\mu+\nu-1}}e^{ix\xi}a(x,\xi)\hat{v}(\xi)d\xi+
\]
\[
\sum_{j=0}^{M-1}\frac{1}{(2\pi)^n}\int\left(\int_{\Gamma_{\left(\xi'\right),M}}e^{ix\xi}a_{m_{1}-j,m_{2}-j}(x,\xi',\xi_{n})\hat{v}(\xi',\xi_{n})d\xi_{n}\right)d\xi'+
\]
\[
\frac{1}{(2\pi)^n}\int_{\left\langle \xi\right\rangle \ge BM^{\mu+\nu-1}}e^{ix\xi}r_{m_{1}-M,m_{2}-M}(x,\xi)\hat{v}(\xi)d\xi.
\]
and 
\[
op(a)v(x)=
\]
\[
\frac{1}{(2\pi)^n}\int_{\mathbb{R}^{n-1}}\left(\int_{\left|\xi_{n}\right|\le BM^{\mu+\nu-1}}e^{ix\xi}a(x,\xi',\xi_{n})\hat{v}(\xi',\xi_{n})d\xi_{n}\right)d\xi'+
\]
\[
\sum_{j=0}^{M-1}\frac{1}{(2\pi)^n}\int\left(\int_{\Gamma_{\xi',M}}e^{ix\xi}a_{m_{1}-j,m_{2}-j}(x,\xi',\xi_{n})\hat{v}(\xi',\xi_{n})d\xi_{n}\right)d\xi'+
\]
\[
\frac{1}{(2\pi)^n}\int_{\mathbb{R}^{n-1}}\left(\int_{\left|\xi_{n}\right|\ge BM^{\mu+\nu-1}}e^{ix\xi}r_{m_{1}-M,m_{2}-M}(x,\xi)\hat{v}(\xi',\xi_{n})d\xi_{n}\right)d\xi'.
\]

This can be used to prove the following proposition:
\begin{prop}
\label{thm:simbolo na fronteira}Let $a\in S_{\mu\nu}^{m_{1},m_{2}}(\mathbb{R}^{n}\times\mathbb{R}^{n})$
be a symbol that satisfies the assumption $(A)$. If $u\in\mathcal{S}(\mathbb{R}^{n-1})$,
then the following operator
\[
A^{kj}\left(u\right)=\lim_{x_{n}\to0^{+}}D_{x_{n}}^{k}op(a)\left(u\otimes\delta^{(j)}\right)
\]
is well defined. Moreover $A^{kj}=op(a^{kj})$, where $a^{kj}\in S_{\mu\nu}^{m_{1}+j+k+1,m_{2}}(\mathbb{R}^{n-1}\times\mathbb{R}^{n-1})$.\end{prop}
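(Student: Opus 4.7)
The plan is to substitute $v=u\otimes\delta^{(j)}$ into the contour decomposition formula established just above the statement, apply $D_{x_{n}}^{k}$ term by term, and take $x_{n}\to 0^{+}$.

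First I observe that $\widehat{u\otimes\delta^{(j)}}(\xi',\xi_{n})=\hat{u}(\xi')(i\xi_{n})^{j}$ is entire in $\xi_{n}$, grows only polynomially there, and decays rapidly in $\xi'$; hence $v$ fits the hypotheses of the contour formula. Differentiation under the integral sign (legitimate because of the rapid $\xi'$-decay of $\hat{u}$) produces an extra factor $\xi_{n}^{k}$ in each of the three pieces. Next, because every path used (the semicircle $\Gamma_{\xi'}$ and the two real segments $\gamma_{1},\gamma_{2}$) lies in the closed upper half-plane, one has $|e^{ix_{n}\xi_{n}}|\le 1$ there, so dominated convergence lets me set $x_{n}=0$ inside each integrand. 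An application of Fubini, using once more the $\xi'$-decay of $\hat{u}$, then allows me to interchange the $\xi'$- and $\xi_{n}$-integrations and identify the result as
$$A^{kj}(u)(x')=\frac{1}{(2\pi)^{n-1}}\int e^{ix'\xi'}\,a^{kj}(x',\xi')\,\hat{u}(\xi')\,d\xi',$$
where $a^{kj}(x',\xi')$ is the three-piece expression obtained from the three terms of the original formula (evaluated at $x_{n}=0$ and carrying the extra factor $i^{j}\xi_{n}^{k+j}$ under the $\xi_{n}$-integral). Cauchy's theorem, together with the localization of poles in $|z|\le r\langle\xi'\rangle<B\langle\xi'\rangle$ guaranteed by assumption $(A.2)$, shows that this definition is independent of the choice of $M$.

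The heart of the proof is then to verify that $a^{kj}\in S_{\mu\nu}^{m_{1}+j+k+1,m_{2}}(\mathbb{R}^{n-1}\times\mathbb{R}^{n-1})$. I will estimate $|\partial_{x'}^{\beta'}\partial_{\xi'}^{\alpha'}a^{kj}(x',\xi')|$ by choosing $M$ proportional to $|\alpha'|+|\beta'|$. The semicircular contribution, on which $|\xi_{n}|=B\langle\xi'\rangle$ and estimate $(A.2)$ applies, has the expected order $\langle x'\rangle^{m_{2}-|\beta'|}\langle\xi'\rangle^{m_{1}+j+k+1-|\alpha'|}$; the extra $+1$ in the $\xi'$-order comes from the arc length $\sim\langle\xi'\rangle$. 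The bounded piece on $|\xi_{n}|\le BM^{\mu+\nu-1}$ is dominated via $(A.1)$, and the tail piece on $|\xi_{n}|\ge BM^{\mu+\nu-1}$ via $(A.3)$; in the latter, the factor $(M!)^{\mu+\nu-1}$ combined with the choice $M\sim|\alpha'|+|\beta'|$ and Stirling's inequality yields the Gelfand-Shilov factorial bound $(\alpha'!)^{\mu}(\beta'!)^{\nu}$. A minor subtlety is that the contour $\Gamma_{\xi',M}$ itself depends on $\xi'$, so differentiation in $\xi'$ produces boundary contributions at the endpoints of $\gamma_{1}$, $\Gamma_{\xi'}$, $\gamma_{2}$; these, however, telescope and cancel across the three adjacent pieces, leaving only derivatives of the integrands.

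The main obstacle will be this final symbol-estimate step. The hard part is the combinatorial bookkeeping needed to control $\partial_{\xi'}^{\alpha'}$ applied to $a_{m_{1}-l,m_{2}-l}(x',0,\xi',B\langle\xi'\rangle e^{i\theta})$ via the chain rule, and to coordinate the choice $M=M(\alpha',\beta')$ so that the factor $(M!)^{\mu+\nu-1}$ appearing in $(A.3)$ gets absorbed into $CD^{|\alpha'|+|\beta'|}(\alpha'!)^{\mu}(\beta'!)^{\nu}$ uniformly in $(x',\xi')$. Once this is done, the asymptotic expansion of $a^{kj}$ can even be read off explicitly as the sum of the residues of $i^{j}\xi_{n}^{k+j}a_{m_{1}-l,m_{2}-l}(x',0,\xi',\xi_{n})$ in the upper half-plane, which makes the symbol class membership conceptually transparent.
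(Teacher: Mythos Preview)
Your overall architecture (contour decomposition, limit $x_n\to 0^+$, identification of the resulting boundary symbol) matches the paper's, but you introduce an unnecessary complication that creates a genuine difficulty. The paper first reduces to $k=j=0$ by observing that $D_{x_n}^{k}op(a)D_{x_n}^{j}$ again has a symbol satisfying assumption~(A), of order $(m_1+j+k,m_2)$; then it \emph{fixes} a single integer $M\ge\max\{1,m_1+n+1\}$ once and for all. With $M$ fixed, the three pieces
\[
C,\qquad R_M,\qquad C_0,\dots,C_{M-1}
\]
each have symbols that are estimated directly: the factor $(M!)^{\mu+\nu-1}$ in the tail estimate from assumption~(A.3) is merely a \emph{constant}, and every piece lands in $S_{\mu\nu}^{m_1+1,m_2}(\mathbb{R}^{n-1}\times\mathbb{R}^{n-1})$ with no factorial bookkeeping at all.

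Your plan to let $M$ grow with $|\alpha'|+|\beta'|$ is where the trouble lies. The assertion that ``$(M!)^{\mu+\nu-1}$ combined with $M\sim|\alpha'|+|\beta'|$ and Stirling yields $(\alpha'!)^{\mu}(\beta'!)^{\nu}$'' is not correct as stated: with $M=|\alpha'|+|\beta'|$ one has $(M!)^{\mu+\nu-1}\sim(\alpha'!\beta'!)^{\mu+\nu-1}$ up to exponential factors, and since $\mu+\nu-1>\max(\mu,\nu)$ this overshoots $(\alpha'!)^{\mu}(\beta'!)^{\nu}$. One can rescue the argument only by taking $M=c(|\alpha'|+|\beta'|)+M_0$ with $c<\min(\mu,\nu)/(\mu+\nu-1)$, a constraint you do not mention; and even then the growing domain $|\xi_n|\le BM^{\mu+\nu-1}$ in the bounded piece needs its own tracking. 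None of this is needed: simply fix $M$. The variable-$M$ device is what the paper uses in the later Proposition on $r^{+}op(a)e^{-}$, where the order of the $\xi_n$-polynomial genuinely depends on the multi-index; here it does not, and importing it only obscures a short proof. Your remarks about telescoping endpoint contributions and the chain rule on $\Gamma_{\xi'}$ are correct and are implicitly handled in the paper by invoking item~2 of assumption~(A) after the parametrization $\xi_n=B\langle\xi'\rangle e^{i\theta}$.
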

\begin{proof}
We only need to prove it for $k=j=0$, as $D_{x_{n}}^{k}op(a)D_{x_{n}}^{j}$
is also a pseudo-differential operator whose symbol satisfies assumption
(A). The proof is similar to \cite[Lemma 1]{Erkipcommunications}.
Let us choose and fix an integer $M\ge\max\left\{ 1,m_{1}+n+1\right\} $.
Using the remark about the consequences of assumption (A) and the
path $\Gamma_{\xi',M}$, we can easily take the limit in $x_{n}$
to obtain:
\[
\lim_{x_{n}\to0^+}Au(x)=Cu(x')+R_{M}u(x')+\sum_{j=0}^{M-1}C_{j}u(x'),
\]
where

\[
Cu(x')=\frac{1}{\left(2\pi\right)^{n-1}}\int e^{ix'\xi'}\left(\frac{1}{2\pi}\int_{\left|\xi_{n}\right|\le BM^{\mu+\nu-1}}a(x',0,\xi',\xi_{n})d\xi_{n}\right)\hat{u}\left(\xi'\right)d\xi',
\]
\[
R_{M}u\left(x'\right)=\frac{1}{\left(2\pi\right)^{n-1}}\int e^{ix'\xi'}\left(\frac{1}{2\pi}\int_{\left|\xi_{n}\right|\ge BM^{\mu+\nu-1}}r_{m_{1}-M,m_{2}-M}(x',0,\xi',\xi_{n})d\xi_{n}\right)\hat{u}(\xi')d\xi',
\]
\[
C_{j}u\left(x'\right)=\frac{1}{\left(2\pi\right)^{n-1}}\int e^{ix'\xi'}\left(\frac{1}{2\pi}\int_{\Gamma_{\xi',M}}a_{m_{1}-j,m_{2}-j}(x',0,\xi',\xi_{n})d\xi_{n}\right)\hat{u}(\xi')d\xi'.
\]

Now let us note that
\[
\left|\partial_{x'}^{\beta}\partial_{\xi'}^{\alpha}\left(\int_{\left|\xi_{n}\right|\le BM^{\mu+\nu-1}}a(x',0,\xi',\xi_{n})d\xi_{n}\right)\right|\le
\]
\[
CD^{\left|\alpha\right|+\left|\beta\right|}\left(\alpha!\right)^{\mu}\left(\beta!\right)^{\nu}\left\langle x'\right\rangle ^{m_{2}-\left|\beta\right|}\int_{\left|\xi_{n}\right|\le BM^{\mu+\nu-1}}\left\langle \left(\xi',\xi_{n}\right)\right\rangle ^{m_{1}-\left|\alpha\right|}d\xi_{n}\le
\]
\[
C_{1}D_{1}^{\left|\alpha\right|+\left|\beta\right|}\left(\alpha!\right)^{\mu}\left(\beta!\right)^{\nu}\left\langle x'\right\rangle ^{m_{2}-\left|\beta\right|}\left\langle \xi'\right\rangle ^{m_{1}-\left|\alpha\right|},
\]
where $C_{1}$ and $D_{1}$ are constants that depend on $C$, $D$,
$M$, $\mu$, $\nu$ and $B$, but not on $\alpha$ or $\beta$.

If $k<-n-1$, then $\int_{\mathbb{R}}\left\langle \left(\xi',\xi_{n}\right)\right\rangle ^{k}d\xi_{n}=\left\langle \xi'\right\rangle ^{k+1}\int\left\langle \xi_{n}\right\rangle ^{k}d\xi_{n}$.
Hence
\[
\left|\partial_{x'}^{\beta}\partial_{\xi'}^{\alpha}\left(\int_{\left|\xi_{n}\right|\ge BM^{\mu+\nu-1}}r_{m_{1}-M,m_{2}-M}(x',0,\xi',\xi_{n})d\xi_{n}\right)\right|\le
\]
\[
CD^{\left|\alpha\right|+\left|\beta\right|+2M}\left(M!\right)^{\mu+\nu-1}\left(\alpha!\right)^{\mu}\left(\beta!\right)^{\nu}\left\langle x'\right\rangle ^{m_{2}-M-\left|\beta\right|}\int_{\mathbb{R}}\left\langle \left(\xi',\xi_{n}\right)\right\rangle ^{m_{1}-M-\left|\alpha\right|}d\xi_{n}\le
\]
\[
C\left(\int_{\mathbb{R}}\left\langle \xi_{n}\right\rangle ^{-n-1}d\xi_{n}\right)D^{\left|\alpha\right|+\left|\beta\right|+2M}\left(M!\right)^{\mu+\nu-1}\left(\alpha!\right)^{\mu}\left(\beta!\right)^{\nu}\left\langle x'\right\rangle ^{m_{2}-M-\left|\beta\right|}\left\langle \xi'\right\rangle ^{m_{1}+1-M-\left|\alpha\right|}.
\]

In order to study the integral in $\Gamma_{\xi',M}$, we first study
in $\Gamma_{\xi'}$. Using item 2 of assumption (A) we obtain
\[
\left|\partial_{x'}^{\beta}\partial_{\xi'}^{\alpha}\left(\int_{\Gamma_{\xi'}}a_{m_{1}-j,m_{2}-j}(x',0,\xi',\xi_{n})d\xi_{n}\right)\right|\le
\]
\[
CD^{\left|\alpha\right|+\left|\beta\right|+2j}\left(j!\right)^{\nu}\alpha!\left(\beta!\right)^{\nu}\left\langle x'\right\rangle ^{m_{2}-j-\left|\beta\right|}\left|\int_{\Gamma_{\xi'}}\left\langle \xi'\right\rangle ^{m_{1}-j-\left|\alpha\right|}d\xi_{n}\right|\le
\]
\[
\pi BCD^{2M}\left(M!\right)^{\nu}D^{\left|\alpha\right|+\left|\beta\right|}\alpha!\left(\beta!\right)^{\nu}\left\langle x'\right\rangle ^{m_{2}-j-\left|\beta\right|}\left\langle \xi'\right\rangle ^{m_{1}+1-j-\left|\alpha\right|}.
\]

Finally, using item $1$ of assumption (A), we obtain
\[
\left|\partial_{x'}^{\beta}\partial_{\xi'}^{\alpha}\left(\int_{\gamma_{1}\cup\gamma_{2}}a_{m_{1}-j,m_{2}-j}(x',0,\xi',\xi_{n})d\xi_{n}\right)\right|\le
\]
\[
CD^{\left|\alpha\right|+\left|\beta\right|+2j}\left(j!\right)^{\nu}\alpha!\left(\beta!\right)^{\nu}\left\langle x'\right\rangle ^{m_{2}-j-\left|\beta\right|}\int_{\left|\xi_{n}\right|\le\mbox{max}\left\{ B\left\langle \xi'\right\rangle ,BM^{\mu+\nu-1}\right\} }\left\langle \xi\right\rangle ^{m_{1}-j-\left|\alpha\right|}d\xi_{n}\le
\]
\[
C_{1}D_{1}^{\left|\alpha\right|+\left|\beta\right|}\alpha!\left(\beta!\right)^{\nu}\left\langle x'\right\rangle ^{m_{2}-j-\left|\beta\right|}\left\langle \xi'\right\rangle ^{m_{1}+1-j-\left|\alpha\right|},
\]
where $C_{1}$ does not depend on $\alpha$ or $\beta$. It only depends
on $C$, $D$, $M$, $\mu$, $\nu$ and $B$.
\end{proof}
Now let us prove what we call the Gelfand-Shilov transmission property.
\begin{prop}
\label{pro:gelf shilov transm property}Let $a\in S_{\mu\nu}^{m_{1},m_{2}}(\mathbb{R}^{n}\times\mathbb{R}^{n})$
be a symbol that satisfies the assumption $(A)$. Then $r^{+}op(a)e^{+}$
is a map from $\mathcal{S}_{\theta}^{\theta}(\mathbb{R}_{+}^{n})$
to $\mathcal{S}_{\theta}^{\theta}(\mathbb{R}_{+}^{n})$, where $\theta\ge\mu+\nu-1$.
This property will be called Gelfand-Shilov transmission property.
\end{prop}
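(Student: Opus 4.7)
The plan is to reduce the transmission property to the three-term decomposition of $op(a)v$ established just before Proposition~\ref{thm:simbolo na fronteira}, applied to $v=e^{-}w$ for a suitable Gelfand-Shilov function $w$ on $\mathbb{R}_{-}^{n}$. Given $u\in\mathcal{S}_{\theta}^{\theta}(\mathbb{R}_{+}^{n})$, since $\theta\ge\mu+\nu-1>1$, Theorem~\ref{thm:teo de extensao de func gelf shilov} furnishes an extension $\tilde u\in\mathcal{S}_{\theta}^{\theta}(\mathbb{R}^{n})$ with $r^{+}\tilde u=u$. Writing $e^{+}u=\tilde u-e^{-}w$ for $w:=r^{-}\tilde u\in\mathcal{S}_{\theta}^{\theta}(\mathbb{R}_{-}^{n})$, one obtains
\[
r^{+}op(a)e^{+}u=r^{+}op(a)\tilde u-r^{+}op(a)e^{-}w.
\]
Proposition~\ref{pro:continuity in gelfandshilov} puts $op(a)\tilde u$ in $\mathcal{S}_{\theta}^{\theta}(\mathbb{R}^{n})$, so the first term belongs to $\mathcal{S}_{\theta}^{\theta}(\mathbb{R}_{+}^{n})$. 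The proposition thus collapses to showing that $r^{+}op(a)e^{-}w\in\mathcal{S}_{\theta}^{\theta}(\mathbb{R}_{+}^{n})$ for every $w\in\mathcal{S}_{\theta}^{\theta}(\mathbb{R}_{-}^{n})$.

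For this reduced problem, $v=e^{-}w$ meets the hypotheses stated in the paragraph preceding Proposition~\ref{thm:simbolo na fronteira}: $\hat v$ is continuous on $\mathbb{R}^{n-1}\times\mathbb{H}$, holomorphic in $\xi_{n}$ on $\overset{\circ}{\mathbb{H}}$, and $\xi'^{\alpha'}\hat v$ is bounded on $\mathbb{R}^{n-1}\times\mathbb{H}$ for every $\alpha'$. For $x_{n}>0$ one has $|e^{ix_{n}\xi_{n}}|\le 1$ on any contour in $\mathbb{H}$. Since item~(2) of assumption~(A) locates the poles of $\xi_{n}\mapsto a_{m_{1}-j,m_{2}-j}(x,\xi',\xi_{n})$ strictly inside the semicircle of radius $B\langle\xi'\rangle$ (or in a bounded region when they are real), Cauchy's theorem lets me deform the $\xi_{n}$-contour onto $\Gamma_{(\xi'),M}$, yielding the decomposition of $op(a)v(x)$ as a low-frequency integral on $\langle\xi\rangle\le BM^{\mu+\nu-1}$, a finite sum over $j<M$ of contour integrals along $\Gamma_{(\xi'),M}$ involving $a_{m_{1}-j,m_{2}-j}\hat v$, and a high-frequency remainder integral involving $r_{m_{1}-M,m_{2}-M}$.

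To obtain the Gelfand-Shilov estimates on $r^{+}op(a)e^{-}w$, I would bound $|x^{\alpha}\partial_{x}^{\beta}(r^{+}op(a)e^{-}w)(x)|$ uniformly for $x\in\mathbb{R}_{+}^{n}$ by converting the powers $x^{\alpha}$ into $\xi$-derivatives via integration by parts, differentiating in $x$ directly on the symbols, and using that the endpoints of $\Gamma_{(\xi'),M}$ lie on the real axis so no boundary terms appear. The Gelfand-Shilov decay of $w$ transfers to uniform bounds for $\xi'^{\alpha'}\partial_{\xi'}^{\gamma'}\hat v$ on $\mathbb{R}^{n-1}\times\mathbb{H}$ that feed the $\xi'$-integrals. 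For each pair $(\alpha,\beta)$ the choice $M$ of order $|\alpha|+|\beta|$ together with items~(1)-(3) of Definition~\ref{def:Assumption A} gives bounds of the form $CD^{|\alpha|+|\beta|+2M}(M!)^{\mu+\nu-1}(\alpha!)^{\mu}(\beta!)^{\nu}$ on each of the three pieces; the Stirling identity then absorbs $(M!)^{\mu+\nu-1}$ with $M\asymp|\alpha|+|\beta|$ into the expected $(\alpha!)^{\theta}(\beta!)^{\theta}$, producing the desired Gelfand-Shilov bound with $\theta=\mu+\nu-1$.

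The main obstacle is the quantitative bookkeeping in the remainder term: one must balance the growing factor $D^{2M}(M!)^{\mu+\nu-1}$ against the decay $\langle\xi\rangle^{m_{1}-M-|\alpha|}$ and the cutoff $\langle\xi\rangle\ge BM^{\mu+\nu-1}$, then optimize $M$ as a function of $|\alpha|+|\beta|$ so that the estimates close up at $(\alpha!)^{\theta}(\beta!)^{\theta}$. The contour integrals along $\Gamma_{(\xi'),M}$ are technically lighter, because item~(2) of (A) provides bounds on the semicircle controlled by $\langle\xi'\rangle$ rather than $\langle\xi\rangle$, reducing those pieces to $(n-1)$-dimensional SG pseudo-differential operators in the spirit of Proposition~\ref{thm:simbolo na fronteira}. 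The argument parallels the Gelfand-Shilov regularity proofs of Cappiello-Gramchev-Nicola-Rodino, the key new point being that taking $x_{n}>0$ rather than passing to the limit $x_{n}\to 0^{+}$ gives the harmless uniform bound $|e^{ix_{n}\xi_{n}}|\le 1$ on $\mathbb{H}$, which is what makes the contour-deformed representation a valid starting point for the estimates.
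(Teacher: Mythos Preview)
Your reduction is exactly the paper's argument: extend via Theorem~\ref{thm:teo de extensao de func gelf shilov}, split $e^{+}u=\tilde u-e^{-}w$, invoke Proposition~\ref{pro:continuity in gelfandshilov} for the smooth piece, and reduce to $r^{+}op(a)e^{-}w\in\mathcal{S}_{\theta}^{\theta}(\mathbb{R}_{+}^{n})$ (the paper isolates this as Proposition~\ref{prop:r+ op(a) e-}, preceded by Lemma~\ref{lem:estimativa l de e-u} for the bounds on $\widehat{e^{-}w}$). The three-term decomposition together with integration by parts is likewise the paper's route.

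There is, however, a gap in your factorial bookkeeping that prevents the sketch from closing at the sharp exponent $\theta=\mu+\nu-1$. You fix a single $M\asymp|\alpha|+|\beta|$ and claim a bound of the shape $CD^{|\alpha|+|\beta|+2M}(M!)^{\mu+\nu-1}(\alpha!)^{\mu}(\beta!)^{\nu}$; but absorbing $(M!)^{\mu+\nu-1}\lesssim(\alpha!\beta!)^{\mu+\nu-1}$ then yields only $(\alpha!)^{2\mu+\nu-1}(\beta!)^{\mu+2\nu-1}$, i.e.\ you lose an extra $\max(\mu,\nu)$ in the Gelfand--Shilov index. The paper avoids this by first expanding $x^{\alpha}\partial_{x}^{\beta}$ via Leibniz into a double sum over $\gamma\le\alpha$, $\sigma\le\beta$, with integrand $D_{\xi}^{\gamma}\bigl((i\xi)^{\sigma}\partial_{x}^{\beta-\sigma}a\bigr)\,D_{\xi}^{\alpha-\gamma}\widehat{e^{-}w}$, and only then choosing $M=\max\{0,\,|\sigma|+m_{1}+n+1\}$ \emph{termwise}. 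The point is that the $\xi$-growth to kill in each term is $\langle\xi\rangle^{|\sigma|}$, not $\langle\xi\rangle^{|\beta|}$, so $M$ need only scale with $|\sigma|$; then $(M!)^{\mu+\nu-1}\lesssim(\sigma!)^{\mu+\nu-1}$ pairs with $((\beta-\sigma)!)^{\nu}$ from the symbol to give $(\beta!)^{\theta}$, while on the $\alpha$-side $(\gamma!)^{\mu}$ from the symbol and $((\alpha-\gamma)!)^{\theta}$ from Lemma~\ref{lem:estimativa l de e-u} combine to $(\alpha!)^{\theta}$. With this per-$\sigma$ choice of $M$, your outline goes through.
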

The proof is based on the one given by J. Chazarain and A. Piriou
\cite[Chapter 5, Section 2]{ChazarainPiriou}.
\begin{lem}
\label{lem:estimativa l de e-u}Let $u\in\mathcal{S}_{\theta}^{\theta}(\mathbb{R}_{-}^{n})$.
Then

1) The function $\xi\in\mathbb{R}_{-}^{n}\to\widehat{e_{-}u}(\xi)\in\mathbb{C}$
extends to a continuous function $\left(\xi',\xi_{n}\right)\in\mathbb{R}^{n-1}\times\mathbb{H}\mapsto\widehat{e_{-}u}(\xi)\in\mathbb{C}$
such that $\xi_{n}\in\overset{\circ}{\mathbb{H}}\mapsto\widehat{e_{-}u}(\xi',\xi_{n})\in\mathbb{C}$
is a holomorphic function for each $\xi'$ fixed.

2) There exist constants $E>0$ and $F>0$ such that, for all $l\in\mathbb{N}_{0}$,
the following estimate holds: 
\[
\left|D_{\xi}^{\sigma}\left(\widehat{e_{-}u}\right)(\xi)\right|\le EF^{l+\left|\sigma\right|}\left(l!\right)^{\theta}\left(\sigma!\right)^{\theta}\left\langle \xi'\right\rangle ^{-l},
\]
for every $\xi:=\left(\xi',\xi_{n}\right)\in\mathbb{R}^{n-1}\times\mathbb{H}$.\end{lem}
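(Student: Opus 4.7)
For part (1), since $u\in\mathcal{S}_\theta^\theta(\mathbb{R}_-^n)$ decays rapidly (in fact exponentially) and $e_-u$ is supported in $\{x_n\le 0\}$, the integral
\[
\widehat{e_-u}(\xi',\xi_n)=\int_{\mathbb{R}^{n-1}}\int_{-\infty}^{0}e^{-ix'\xi'-ix_n\xi_n}u(x)\,dx_n\,dx'
\]
converges absolutely for $\xi'\in\mathbb{R}^{n-1}$ and $\xi_n\in\mathbb{H}$, because $|e^{-ix_n\xi_n}|=e^{x_n\,\mathrm{Im}\,\xi_n}\le 1$ when $x_n\le 0$ and $\mathrm{Im}\,\xi_n\ge 0$. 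Dominated convergence yields continuity on $\mathbb{R}^{n-1}\times\mathbb{H}$, and differentiation under the integral sign in $\xi_n$ (again dominated by $|u|$) shows that $\xi_n\mapsto\widehat{e_-u}(\xi',\xi_n)$ is holomorphic on $\overset{\circ}{\mathbb{H}}$ for each fixed $\xi'$.

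For part (2), the plan is a standard integration-by-parts argument adapted to Gelfand-Shilov scales. Differentiating under the integral gives
\[
D_\xi^\sigma\widehat{e_-u}(\xi)=\int_{\mathbb{R}_-^n}(-x)^\sigma e^{-ix\xi}u(x)\,dx.
\]
For even $l=2l'$ I would use the identity $\langle\xi'\rangle^{2l'}e^{-ix'\xi'}=(1-\Delta_{x'})^{l'}e^{-ix'\xi'}$ and integrate by parts in $x'$ (no boundary terms, since $u$ decays rapidly in $x'$), obtaining
\[
\langle\xi'\rangle^{l}\,D_\xi^\sigma\widehat{e_-u}(\xi)=\int_{\mathbb{R}_-^n}e^{-ix\xi}\,(1-\Delta_{x'})^{l'}\bigl[(-x)^\sigma u(x)\bigr]\,dx.
\]
Because $|e^{-ix\xi}|\le 1$ on the domain of integration, matters reduce to bounding the $L^1$-norm of $(1-\Delta_{x'})^{l'}[x^\sigma u]$. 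The case of odd $l$ is handled identically using $l'+1$ applications, which only enlarges the constant.

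To execute this, I would expand $(1-\Delta_{x'})^{l'}=\sum_{k\le l'}\binom{l'}{k}(-1)^{k}\sum_{|\gamma|=k}(k!/\gamma!)\,\partial_{x'}^{2\gamma}$, apply Leibniz to $\partial_{x'}^{2\gamma}[x^\sigma u]$, and bound each resulting summand via the equivalent Gelfand-Shilov estimate with weight $\langle x\rangle^{-n-1}$ (the remark noted in Section~1): for constants $C_1,D_1>0$,
\[
\bigl|x^{\sigma-\eta}\partial_{x'}^{2\gamma-\eta}u(x)\bigr|\le C_1D_1^{|\sigma|+2k-2|\eta|}\bigl((\sigma-\eta)!\bigr)^\theta\bigl((2\gamma-\eta)!\bigr)^\theta\langle x\rangle^{-n-1}.
\]
Two elementary facts then do the combinatorial work. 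First, since $\theta\ge 1$, one has $\bigl(\sigma!/(\sigma-\eta)!\bigr)\cdot((\sigma-\eta)!)^\theta\le(\sigma!)^\theta$, cleanly absorbing the $\sigma$-side combinatorics. Second, $(2\gamma)!\le 4^{k}(\gamma!)^2$ and $\gamma!\le k!$, together with the polynomial bounds on $\#\{|\gamma|=k\}$ and $\#\{\eta\le 2\gamma\}$, control the $\gamma$-side up to a factor $(k!)^{2\theta}$. After integrating $\langle x\rangle^{-n-1}$ over $\mathbb{R}_-^n$ and summing over $\eta,\gamma,k\le l'$, one obtains a bound of the shape $C_2D_1^{|\sigma|}(\sigma!)^\theta E_0^{l'}(l'!)^{2\theta}$, and the identity $(l'!)^{2}\le(2l')!=l!$ converts $(l'!)^{2\theta}$ into $(l!)^\theta$, yielding the required estimate.

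The main obstacle is precisely this last combinatorial bookkeeping: one must verify that the factorial growth accumulated through the Leibniz and Laplacian expansions never exceeds $(l!)^\theta$. The two decisive observations are (i) that using $l'=\lfloor l/2\rfloor$ rather than $l$ applications of $(1-\Delta_{x'})$ turns the naturally arising $(l'!)^{2\theta}$ into the claimed $(l!)^\theta$ via $(l'!)^2\le(2l')!$, and (ii) that $\theta\ge 1$ is exactly what collapses the $\sigma$-combinatorics into a single $(\sigma!)^\theta$. Everything else is routine manipulation of constants.
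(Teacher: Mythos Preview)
Your proof is correct, but it takes a more laborious route than the paper's. For part~(2) the paper simply multiplies by a monomial $\xi'^{\gamma}$ rather than $\langle\xi'\rangle^{l}$: integrating by parts with $D_{x'}^{\gamma}$ gives directly
\[
\bigl|\xi'^{\gamma}D_{\xi}^{\sigma}\widehat{e_{-}u}(\xi)\bigr|
\le\int_{\mathbb{R}_{-}^{n}}(1+|x|^{2})^{-n}\,\bigl|(1+|x|^{2})^{n}D_{x'}^{\gamma}(x^{\sigma}u)(x)\bigr|\,dx
\le C' D'^{|\gamma|+|\sigma|}(\gamma!\,\sigma!)^{\theta},
\]
and the $\langle\xi'\rangle^{-l}$ bound follows by choosing $\gamma=le_{j}$ for the index $j$ with $|\xi_{j}'|$ maximal (together with $\langle\xi'\rangle^{l}\le n^{l/2}\max(1,|\xi_{1}'|^{l},\ldots,|\xi_{n-1}'|^{l})$). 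This sidesteps the even/odd split, the expansion of $(1-\Delta_{x'})^{l'}$, and the identity $(l'!)^{2}\le(2l')!$ that you need to convert $(l'!)^{2\theta}$ into $(l!)^{\theta}$. Your argument is still valid and the combinatorial bookkeeping you outline does close; it is just that working with monomials $\xi'^{\gamma}$ rather than the full symbol $\langle\xi'\rangle^{l}$ keeps the Leibniz expansion one layer shallower and makes the estimate essentially a one-liner.
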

\begin{proof}
1) By definition $\widehat{e^{-}u}(\xi):=\int_{\mathbb{R}^{n-1}}\left(\int_{-\infty}^{0}e^{-ix\xi}u(x)dx_{n}\right)dx'$.
Hence if $Im\left(\xi_{n}\right)\ge0$ and $x_{n}\le0$, then $Re\left(-ix_{n}\xi_{n}\right)\le0.$
Therefore the integral is well defined and it is straightforward to
see that it defines an analytic function of $\xi_{n}$ in the upper
half-plane for each $\xi'$ fixed.

2) Using integration by parts, we obtain
\[
\left|\xi'^{\gamma}D_{\xi}^{\sigma}\int_{\mathbb{R}_{-}^{n}}e^{-ix\xi}u(x)dx\right|\le\int_{\mathbb{R}_{-}^{n}}\left(1+\left|x\right|^{2}\right)^{-n}\left|\left(1+\left|x\right|^{2}\right)^{n}D_{x'}^{\gamma}\left(x^{\sigma}u\right)(x)\right|dx\le
\]
\[
\left(\int_{\mathbb{R}_{-}^{n}}\left(1+\left|x\right|^{2}\right)^{-n}dx\right)\left(n!\right)^{\theta}CD^{\left|\gamma\right|+\left|\sigma\right|+n}\left(\sigma!\gamma!\right)^{\theta},
\]
for some contants $C>0$ and $D>0$. The result follows from the above
estimate.\end{proof}
\begin{prop}
\label{prop:r+ op(a) e-}Let $a\in S_{\mu\nu}^{m_{1},m_{2}}(\mathbb{R}^{n}\times\mathbb{R}^{n})$
be a symbol that satisfies the assumption $(A)$ and $u\in\mathcal{S}_{\theta}^{\theta}(\mathbb{R}_{-}^{n})$,
where $\theta\ge\mu+\nu-1$. Hence $r^{+}op(a)e^{-}(u)$ belongs to
$\mathcal{S}_{\theta}^{\theta}(\mathbb{R}_{+}^{n})$.\end{prop}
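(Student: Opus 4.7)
The plan is to establish the Gelfand-Shilov estimate
$|x^{\alpha}\partial_{x}^{\beta}(r^{+}op(a)e^{-}u)(x)|\le CD^{|\alpha|+|\beta|}(\alpha!)^{\theta}(\beta!)^{\theta}$
on $\mathbb{R}_{+}^{n}$ by combining the contour-shift representation of $op(a)v$ derived just above Proposition \ref{thm:simbolo na fronteira} (valid whenever $\hat{v}(\xi',\cdot)$ extends holomorphically to $\overset{\circ}{\mathbb{H}}$ with mild growth, which holds for $v=e^{-}u$ by Lemma \ref{lem:estimativa l de e-u}) with the decay estimates on $\widehat{e^{-}u}$ on the upper half-plane. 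First I fix an integer $M$, to be chosen later proportional to $|\alpha|+|\beta|$, and write, for $x_{n}>0$,
\[
op(a)(e^{-}u)(x)=I_{1,M}(x)+\sum_{j=0}^{M-1}I_{2,j,M}(x)+I_{3,M}(x),
\]
where $I_{1,M}$ is the integral over $\langle\xi\rangle\le BM^{\mu+\nu-1}$, each $I_{2,j,M}$ is the $\xi'$-integral of the $\xi_{n}$-contour integral along $\Gamma_{\xi',M}$ of $a_{m_{1}-j,m_{2}-j}\widehat{e^{-}u}$, and $I_{3,M}$ is the remainder against $r_{m_{1}-M,m_{2}-M}$. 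The deformation of the $\xi_{n}$-contour is legal precisely because $\widehat{e^{-}u}(\xi',\cdot)$ is holomorphic on $\overset{\circ}{\mathbb{H}}$.

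Next I handle the multiplication by $x^{\alpha}$ and differentiation $\partial_{x}^{\beta}$. The factor $\partial_{x}^{\beta}$ produces a sum of terms in which derivatives fall on $e^{ix\xi}$ (bringing down $\xi^{\gamma}$ factors with $|\gamma|\le|\beta|$) and on the symbol $a_{m_{1}-j,m_{2}-j}(x,\xi)$ or $r_{m_{1}-M,m_{2}-M}(x,\xi)$, which by assumption (A) are controlled by $(\beta!)^{\nu}(j!)^{\nu}$ and $(\beta!)^{\nu}(M!)^{\mu+\nu-1}$ respectively. To deal with $x^{\alpha}$, I use $x'^{\alpha'}e^{ix'\xi'}=D_{\xi'}^{\alpha'}e^{ix'\xi'}$ and $x_{n}^{\alpha_{n}}e^{ix_{n}\xi_{n}}=D_{\xi_{n}}^{\alpha_{n}}e^{ix_{n}\xi_{n}}$, then integrate by parts in $\xi'$ (over $\mathbb{R}^{n-1}$) and in $\xi_{n}$ (along the deformed contour); by Leibniz, derivatives distribute among the symbol and $\widehat{e^{-}u}$, contributing $(\alpha!)^{\mu}$ from the symbol via assumption (A) and $(\alpha!)^{\theta}$ from $\widehat{e^{-}u}$ via Lemma \ref{lem:estimativa l de e-u}. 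On the semicircular portion $\Gamma_{\xi'}$, parameterising $\xi_{n}=B\langle\xi'\rangle e^{i\theta}$ gives the crucial factor $|e^{ix_{n}\xi_{n}}|=e^{-x_{n}B\langle\xi'\rangle\sin\theta}$, which (together with the arbitrary-$l$ bound $|D^{\sigma}\widehat{e^{-}u}|\le EF^{l+|\sigma|}(l!)^{\theta}(\sigma!)^{\theta}\langle\xi'\rangle^{-l}$ from Lemma \ref{lem:estimativa l de e-u}) ensures convergence of the $\xi_{n}$-integrals and gives algebraic decay in $\langle\xi'\rangle$ sufficient for $\xi'$-integrability after choosing $l$ large relative to $n$ and $|\alpha|+|\beta|$.

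Putting these estimates together and summing $j$ from $0$ to $M-1$ yields a bound of the form $C_{0}D_{0}^{|\alpha|+|\beta|}(\alpha!)^{\theta}(\beta!)^{\theta}(M!)^{\mu+\nu-1}\rho^{M}$ (with $\rho$ independent of $M$) plus analogous contributions from $I_{1,M}$ and $I_{3,M}$; choosing $M$ a fixed multiple of $|\alpha|+|\beta|$ and invoking $\theta\ge\mu+\nu-1$ allows $(M!)^{\mu+\nu-1}$ to be absorbed into $(\alpha!)^{\theta}(\beta!)^{\theta}$ up to a geometric factor, proving the claim. The main obstacle I expect is precisely this bookkeeping: one must verify that the exponential factor $e^{-x_{n}B\langle\xi'\rangle\sin\theta}$ coming from the semicircle is strong enough, once combined with the $\langle\xi'\rangle^{-l}$ decay of $\widehat{e^{-}u}$ on $\mathbb{H}$, to control the $\xi'$-integrals uniformly as $M$ grows, and that differentiating under the integral in $\xi'$ does not generate a derivative on the $\xi'$-dependent path $\Gamma_{\xi',M}$ that breaks the factorial bookkeeping; this is handled by a careful change of variable in $\theta$ on the semicircle so that the path itself becomes $\xi'$-independent before differentiating.
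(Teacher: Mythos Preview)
Your overall strategy---contour deformation of the $\xi_{n}$-integral into $\mathbb{H}$ combined with the estimates of Lemma~\ref{lem:estimativa l de e-u}---is exactly the paper's, but your proposed execution diverges from it in ways that create avoidable difficulties.

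The paper performs the integration by parts \emph{before} deforming the contour: it writes
\[
x^{\alpha}\partial_{x}^{\beta}\!\int e^{ix\xi}a(x,\xi)\widehat{e^{-}u}(\xi)\,d\xi
=\sum_{\gamma\le\alpha}\sum_{\sigma\le\beta}\binom{\alpha}{\gamma}\binom{\beta}{\sigma}(-1)^{|\alpha|}
\int e^{ix\xi}D_{\xi}^{\gamma}\bigl((i\xi)^{\sigma}\partial_{x}^{\beta-\sigma}a\bigr)\,D_{\xi}^{\alpha-\gamma}\widehat{e^{-}u}\,d\xi
\]
as an honest $\mathbb{R}^{n}$-integral, and only then applies the contour decomposition term by term. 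This completely sidesteps the obstacle you flag about differentiating a $\xi'$-dependent path; no change of variable on the semicircle is needed. Moreover, for each $(\gamma,\sigma)$-term the paper takes $M=\max\{0,|\sigma|+m_{1}+n+1\}$, depending only on $\sigma\le\beta$. Consequently the factor $(M!)^{\mu+\nu-1}$ arising from $r_{m_{1}-M,m_{2}-M}$ is directly dominated by $C^{|\sigma|}(\sigma!)^{\mu+\nu-1}\le C^{|\beta|}(\beta!)^{\theta}$, with no need to tie $M$ to $|\alpha|+|\beta|$ or to invoke a mysterious $\rho^{M}$ factor (which does not actually appear in the estimates).

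Two further points. First, the exponential $e^{-x_{n}B\langle\xi'\rangle\sin\theta}$ is not used at all: on $\mathbb{R}^{n-1}\times\mathbb{H}$ one simply bounds $|e^{ix_{n}\xi_{n}}|\le 1$, and all $\xi'$-integrability comes from the $\langle\xi'\rangle^{-l}$ decay in Lemma~\ref{lem:estimativa l de e-u} with $l=m_{1}+n+1+|\sigma|-j$; relying on the exponential would give bounds that degenerate as $x_{n}\to 0^{+}$. Second, the paper uses the path $\Gamma_{(\xi'),M}$ (adapted to the ball $\langle\xi\rangle\le BM^{\mu+\nu-1}$) rather than $\Gamma_{\xi',M}$, so that the compact-region term is a genuinely bounded integral. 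Your plan can be salvaged, but the paper's ordering (integrate by parts first, then deform) and its $\sigma$-dependent choice of $M$ make the bookkeeping essentially trivial.
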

\begin{proof}
Along the proof $C$, $D$, $C_{1}$, $D_{1}$, ... indicate constants
that do not depend on the multi-indices $\alpha$ and $\beta$ (and
so neither on $\gamma$ nor on $\sigma$ as defined below). Sometimes
we use the same letters to indicate different constants only to avoid
a too messy notation.

We start using integration by parts to obtain
\[
x^{\alpha}\partial_{x}^{\beta}\int e^{ix\xi}a(x,\xi)\widehat{e^{-}u}(\xi)d\xi=
\]
\[
\sum_{\gamma\le\alpha}\sum_{\sigma\le\beta}\left(\begin{array}{c}
\alpha\\
\gamma
\end{array}\right)\left(\begin{array}{c}
\beta\\
\sigma
\end{array}\right)(-1)^{\left|\alpha\right|}\int e^{ix\xi}D_{\xi}^{\gamma}\left(\left(i\xi\right)^{\sigma}\left(\partial_{x}^{\beta-\sigma}a\right)(x,\xi)\right)\left(D_{\xi}^{\alpha-\gamma}\widehat{e^{-}u}\right)(\xi)d\xi.
\]

Let us study the term $\int e^{ix\xi}D_{\xi}^{\gamma}\left(\left(i\xi\right)^{\sigma}\left(\partial_{x}^{\beta-\sigma}a\right)(x,\xi)\right)\left(D_{\xi}^{\alpha-\gamma}\widehat{e^{-}u}\right)(\xi)d\xi$.
We define 
\[
M:=\max\left\{ 0,\left|\sigma\right|+m_{1}+n+1\right\} .
\]

Using the remark about the consequences of assumption (A), we know
that 
\begin{equation}
\int e^{ix\xi}D_{\xi}^{\gamma}\left(\left(i\xi\right)^{\sigma}\left(\partial_{x}^{\beta-\sigma}a\right)(x,\xi)\right)\left(D_{\xi}^{\alpha-\gamma}\widehat{e^{-}u}\right)(\xi)d\xi=\label{eq:equacao estudada}
\end{equation}
\[
\int_{\left\langle \xi\right\rangle \le BM^{\mu+\nu-1}}e^{ix\xi}D_{\xi}^{\gamma}\left(\left(i\xi\right)^{\sigma}\left(\partial_{x}^{\beta-\sigma}a\right)(x,\xi)\right)\left(D_{\xi}^{\alpha-\gamma}\widehat{e^{-}u}\right)(\xi)d\xi+
\]
\[
\sum_{j=0}^{M-1}\int e^{ix'\xi'}\left(\int_{\Gamma_{\left(\xi'\right),M}}e^{ix_{n}\xi_{n}}D_{\xi}^{\gamma}\left(\left(i\xi\right)^{\sigma}\left(\partial_{x}^{\beta-\sigma}a_{m_{1}-j,m_{2}-j}\right)(x,\xi)\right)\left(D_{\xi}^{\alpha-\gamma}\widehat{e^{-}u}\right)(\xi)d\xi_{n}\right)d\xi'+
\]
\[
\int_{\left\langle \xi\right\rangle \ge BM^{\mu+\nu-1}}e^{ix\xi}D_{\xi}^{\gamma}\left(\left(i\xi\right)^{\sigma}\left(\partial_{x}^{\beta-\sigma}r_{m_{1}-M,m_{2}-M}\right)(x,\xi)\right)\left(D_{\xi}^{\alpha-\gamma}\widehat{e^{-}u}\right)(\xi)d\xi.
\]

The second line of Equation \ref{eq:equacao estudada} is such that
\[
\left|\int_{\left\langle \xi\right\rangle \le BM^{\mu+\nu-1}}e^{ix\xi}D_{\xi}^{\gamma}\left(\left(i\xi\right)^{\sigma}\left(\partial_{x}^{\beta-\sigma}a\right)(x,\xi)\right)\left(D_{\xi}^{\alpha-\gamma}\widehat{e^{-}u}\right)(\xi)d\xi\right|\le
\]
\[
CD^{\left|\beta\right|+\left|\gamma\right|}\left(\gamma!\right)^{\mu}(\beta-\sigma)!^{\nu}\left\langle x\right\rangle ^{m_{2}-\left|\beta-\sigma\right|}EF^{\left|\alpha-\gamma\right|}\left(\alpha-\gamma\right)!^{\theta}\int_{\left\langle \xi\right\rangle \le BM^{\mu+\nu-1}}\left\langle \xi\right\rangle ^{m_{1}-\left|\gamma\right|+\left|\sigma\right|}d\xi\le
\]
\[
C_{1}D_{1}^{\left|\beta\right|+\left|\alpha\right|}\left(\gamma!\right)^{\mu}(\beta-\sigma)!^{\nu}\left\langle x\right\rangle ^{m_{2}-\left|\beta-\sigma\right|}\left(\alpha-\gamma\right)!^{\theta}\left(BM^{\mu+\nu-1}\right)^{\left|m_{1}\right|+\left|\sigma\right|+n}\le C_{2}D_{2}^{\left|\beta\right|+\left|\alpha\right|}\left(\alpha!\right)^{\theta}(\beta!)^{\theta}\left\langle x\right\rangle ^{m_{2}}.
\]

We have used that $\left(BM^{\mu+\nu-1}\right)^{\left|m_{1}\right|+\left|\sigma\right|+n}\le CD^{\left|\sigma\right|}\left(\sigma!\right)^{\mu+\nu-1}$
and that 
\[
\left|D_{\xi}^{\gamma}\left(\left(i\xi\right)^{\sigma}\left(\partial_{x}^{\beta-\sigma}a\right)(x,\xi)\right)\right|\le CD^{\left|\gamma\right|+\left|\beta\right|}\left(\gamma!\right)^{\mu}\left(\beta-\sigma\right)!^{\nu}\left\langle x\right\rangle ^{m_{2}-\left|\beta-\sigma\right|}\left\langle \xi\right\rangle ^{m_{1}+\left|\sigma-\gamma\right|}.
\]

Let us now study the term in the fourth line of Equation \ref{eq:equacao estudada}.
We note that, in the case $M=0$, this is the only term that appears in the right
hand side of Equation \ref{eq:equacao estudada} and $r_{m_{1}-M,m_{2}-M}=a$ in that situation.

For $M\ge0$, we have 
\[
\left|\int_{\left\langle \xi\right\rangle \ge BM^{\mu+\nu-1}}e^{ix\xi}D_{\xi}^{\gamma}\left(\left(i\xi\right)^{\sigma}\left(\partial_{x}^{\beta-\sigma}r_{m_{1}-M,m_{2}-M}\right)(x,\xi)\right)\left(D_{\xi}^{\alpha-\gamma}\widehat{e^{-}u}\right)(\xi)d\xi\right|\le
\]
\[
CD^{\left|\beta\right|+\left|\gamma\right|}\left(\beta-\sigma\right)!^{\nu}\left(\gamma!\right)^{\mu}\left\langle x\right\rangle ^{m_{2}-M-\left|\beta-\sigma\right|}EF^{\left|\alpha\right|-\left|\gamma\right|}\left(\alpha-\gamma\right)!^{\theta}\int_{\mathbb{R}^{n}}\left\langle \xi\right\rangle ^{m_{1}-M+\left|\sigma\right|-\left|\gamma\right|}d\xi\le
\]
\[
C_{1}D_{1}^{\left|\alpha\right|+\left|\beta\right|}\left(\alpha!\right)^{\theta}\left(\beta-\sigma\right)!^{\nu}\left\langle x\right\rangle ^{m_{2}},
\]
where we used that $m_{1}-M+\left|\sigma\right|-\left|\gamma\right|\le-n-1$.

Finally we study the term of the third line of Equation \ref{eq:equacao estudada}.
Without loss of generality, we can suppose that $M=\left|\sigma\right|+m_{1}+n+1>0$
as $0\le j<M$.

First we study the integral in $\Gamma_{\xi'}$: 
\[
\left|\int e^{ix'\xi'}\left(\int_{\Gamma_{\xi'}}e^{ix_{n}\xi_{n}}D_{\xi}^{\gamma}\left(\left(i\xi\right)^{\sigma}\left(\partial_{x}^{\beta-\sigma}a_{m_{1}-j,m_{2}-j}\right)(x,\xi)\right)\left(D_{\xi}^{\alpha-\gamma}\widehat{e^{-}u}\right)(\xi)d\xi_{n}\right)d\xi'\right|\le
\]
\[
CD^{\left|\beta\right|+\left|\gamma\right|}\left(\beta-\sigma\right)!^{\nu}\gamma!\left(j!\right)^{\nu}\left\langle x\right\rangle ^{m_{2}-j-\left|\beta\right|+\left|\sigma\right|}\int_{\mathbb{R}^{n-1}}\left(\int_{\Gamma_{\xi'}}\left\langle \xi'\right\rangle ^{m_{1}-j+\left|\sigma\right|-\left|\gamma\right|}\left(D_{\xi}^{\alpha-\gamma}\widehat{e^{-}u}\right)(\xi)d\xi_{n}\right)d\xi'.
\]

In the above expressions, we have used item 2 of assumption (A). Using the estimate of Lemma \ref{lem:estimativa l de e-u}
with $l=m_{1}+n+1+\left|\sigma\right|-j$, we conclude that the above
expression is smaller than $C_{1}D_{1}^{\left|\beta\right|+\left|\alpha\right|}\left(\alpha!\right)^{\theta}\left(\beta!\right)^{\theta}\left\langle x\right\rangle ^{m_{2}}.$

Finally the integral in $\gamma_{3}\cup\gamma_{4}$ can be evaluated
as follows
\[
\left|\int e^{ix'\xi'}\left(\int_{\gamma_{3}\cup\gamma_{4}}e^{ix_{n}\xi_{n}}D_{\xi}^{\gamma}\left(\left(i\xi\right)^{\sigma}\left(\partial_{x}^{\beta-\sigma}a_{m_{1}-j,m_{2}-j}\right)(x,\xi)\right)\left(D_{\xi}^{\alpha-\gamma}\widehat{e^{-}u}\right)(\xi)d\xi_{n}\right)d\xi'\right|\le
\]
\[
CD^{\left|\beta\right|+\left|\gamma\right|}\left(\beta-\sigma\right)!^{\nu}\gamma!\left(j!\right)^{\nu}\left\langle x\right\rangle ^{m_{2}-j-\left|\beta\right|+\left|\sigma\right|}
\]
\begin{equation}
\left|\int\left(\int_{\left|\xi_{n}\right|\le\max\left\{ B\left\langle \xi'\right\rangle ,\sqrt{B^{2}M^{2(\mu+\nu-1)}-\left\langle \xi'\right\rangle ^{2}}\right\} }\left\langle \xi\right\rangle ^{m_{1}-j+\left|\sigma\right|-\left|\gamma\right|}\left(D_{\xi}^{\alpha-\gamma}\widehat{e^{-}u}\right)(\xi)d\xi_{n}\right)d\xi'\right|.\label{eq:integral feia}
\end{equation}

If $\max\left\{ B\left\langle \xi'\right\rangle ,\sqrt{B^{2}M^{2(\mu+\nu-1)}-\left\langle \xi'\right\rangle ^{2}}\right\} =B\left\langle \xi'\right\rangle $,
then $\left\langle \xi\right\rangle ^{m_{1}-j+\left|\sigma\right|-\left|\gamma\right|}\le CD^{m_{1}-j+\left|\sigma\right|-\left|\gamma\right|}\left\langle \xi'\right\rangle ^{m_{1}-j+\left|\sigma\right|-\left|\gamma\right|}$
in the integrand, where $C$ and $D$ are contants that depend on
$B$. Using Lemma \ref{lem:estimativa l de e-u} with $l:=m_{1}+n+1+\left|\sigma\right|-j$,
we conclude that the above integral is smaller than $C_{1}D_{1}^{\left|\beta\right|+\left|\alpha\right|}\left(\alpha!\right)^{\theta}\left(\beta!\right)^{\theta}\left\langle x\right\rangle ^{m_{2}}$.

If $\max\left\{ B\left\langle \xi'\right\rangle ,\sqrt{B^{2}M^{2(\mu+\nu-1)}-\left\langle \xi'\right\rangle ^{2}}\right\} =\sqrt{B^{2}M^{2(\mu+\nu-1)}-\left\langle \xi'\right\rangle ^{2}}$,
then $\left\langle \xi\right\rangle \le BM^{\mu+\nu-1}$ in the integrand
and
\[
\left(\int_{\left\langle \xi\right\rangle \le BM^{\mu+\nu-1}}\left\langle \xi\right\rangle ^{m_{1}-j+\left|\sigma\right|-\left|\gamma\right|}\left(D_{\xi}^{\alpha-\gamma}\widehat{e^{-}u}\right)(\xi)d\xi\right)\le
\]
\[
\left\{ \begin{array}{c}
EF^{\left|\alpha-\gamma\right|}\left(\alpha-\gamma\right)!^{\theta}\left(\int_{\left|\xi\right|\le1}d\xi\right)\left(BM^{(\mu+\nu-1)}\right)^{n+m_{1}-j+\left|\sigma\right|-\left|\gamma\right|},\,\mbox{if}\,\, m_{1}-j+\left|\sigma\right|>\left|\gamma\right|\\
EF^{\left|\alpha-\gamma\right|}\left(\alpha-\gamma\right)!^{\theta}\left(\int_{\left|\xi\right|\le1}d\xi\right)B^{n}M^{n(\mu+\nu-1)},\,\mbox{if}\,\, m_{1}-j+\left|\sigma\right|\le\left|\gamma\right|
\end{array}\right..
\]

In order to conclude that the expression of Equation \ref{eq:integral feia}
is smaller than $C_{1}D_{1}^{\left|\beta\right|+\left|\alpha\right|}\left(\alpha!\right)^{\theta}\left(\beta!\right)^{\theta}\left\langle x\right\rangle ^{m_{2}}$,
we use in the first situation, when $m_{1}-j+\left|\sigma\right|>\left|\gamma\right|$, 
that $\left(j!\right)^{\nu}M^{-j(\mu+\nu-1)}\le1$ and $M^{\left(\mu+\nu-1\right)\left(n+m_{1}+\left|\sigma\right|-\left|\gamma\right|\right)}\le CD^{\left|\sigma\right|}\left(\sigma!\right)^{\mu+\nu-1}$.
In the second situation, $\left(j!\right)^{\nu}\le CD^{\left|\sigma\right|}\left(\sigma!\right)^{\nu}$
is used.
\end{proof}

The proof of Proposition \ref{pro:gelf shilov transm property} now
follows easily:
\begin{proof}
(of Proposition \ref{pro:gelf shilov transm property}) Let $f\in\mathcal{S}_{\theta}^{\theta}(\mathbb{R}_{+}^{n})$.
Let us choose $\tilde{f}\in\mathcal{S}_{\theta}^{\theta}(\mathbb{R}^{n})$
such that $r^{+}\left(\tilde{f}\right)=f$, which exists according
to Theorem \ref{thm:teo de extensao de func gelf shilov}. Let $h\in\mathcal{S}_{\theta}^{\theta}(\mathbb{R}_{-}^{n})$
be defined as $h:=r^{-}\left(\tilde{f}\right)$. Hence
\[
r^{+}op(a)e^{+}(f)=r^{+}op(a)\left(\tilde{f}-e^{-}\left(h\right)\right)=r^{+}op(a)\left(\tilde{f}\right)-r^{+}op(a)e^{-}\left(h\right).
\]
By Proposition \ref{pro:continuity in gelfandshilov}, we know that
$op(a)\left(\tilde{f}\right)\in\mathcal{S}_{\theta}^{\theta}(\mathbb{R}^{n})$.
Hence $r^{+}op(a)\left(\tilde{f}\right)\in\mathcal{S}_{\theta}^{\theta}(\mathbb{R}_{+}^{n})$.
We conclude the proof using Proposition \ref{prop:r+ op(a) e-} to
obtain $r^{+}op(a)e^{-}\left(h\right)\in\mathcal{S}_{\theta}^{\theta}(\mathbb{R}_{+}^{n})$.
\end{proof}
We conclude studying Poisson operators, similar to the ones in L.
B. de Monvel \cite{boutetMonvel}, to obtain the following result:
\begin{prop}
\label{thm:Poisson}Let $a\in S_{\mu\nu}^{m_{1},m_{2}}(\mathbb{R}^{n}\times\mathbb{R}^{n})$
be a symbol that satisfies the assumption $(A)$. If $\theta\ge\mu+\nu-1$
and $v\in\mathcal{S}_{\theta}^{\theta}(\mathbb{R}^{n-1})$, then 
\[
x'\in\mathbb{R}^{n-1}\mapsto r^{+}\left(op(a)\left(v(x')\otimes\delta(x_{n})\right)\right)\in\mathcal{S}_{\theta}^{\theta}(\mathbb{R}_{+}^{n}).
\]
\end{prop}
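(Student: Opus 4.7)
The plan is to adapt directly the proof of Proposition \ref{prop:r+ op(a) e-}, replacing $e^{-}u$ by $v(x') \otimes \delta(x_n)$. The Fourier transform $\widehat{v \otimes \delta}(\xi', \xi_n) = \hat v(\xi')$ is independent of $\xi_n$, so it is trivially entire and bounded in $\xi_n$. Since $v \in \mathcal{S}_\theta^\theta(\mathbb{R}^{n-1})$ implies $\hat v \in \mathcal{S}_\theta^\theta(\mathbb{R}^{n-1})$, one obtains an estimate of the form
\[
|D_{\xi'}^{\sigma'}\hat v(\xi')| \leq E F^{l+|\sigma'|}(l!)^\theta (\sigma'!)^\theta \langle \xi' \rangle^{-l}, \qquad l \in \mathbb{N}_0,
\]
which is the analogue of Lemma \ref{lem:estimativa l de e-u}. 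Consequently, every ingredient used in the proof of Proposition \ref{prop:r+ op(a) e-} becomes available for $op(a)(v \otimes \delta)$, and the Gelfand-Shilov estimate will be verified along exactly the same lines.

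Concretely, for $x \in \mathbb{R}_+^n$ I would expand, via integration by parts in $\xi$,
\[
x^\alpha \partial_x^\beta op(a)(v \otimes \delta)(x) = \sum_{\sigma \leq \beta,\, \gamma \leq \alpha} \binom{\beta}{\sigma}\binom{\alpha}{\gamma} (-1)^{|\alpha|} \int e^{ix\xi} D_\xi^\gamma\!\left[(i\xi)^\sigma \partial_x^{\beta-\sigma} a(x,\xi)\right] D_\xi^{\alpha-\gamma}\hat v(\xi')\, d\xi,
\]
and observe that every term with $\gamma_n < \alpha_n$ vanishes, since $\hat v$ is independent of $\xi_n$; only $\gamma = (\gamma', \alpha_n)$ contributes. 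For each surviving term I would set $M := \max\{1, |\sigma| + m_1 + \alpha_n + n + 1\}$ and decompose the inner $\xi$-integral via the contour $\Gamma_{(\xi'), M}$, as in Subsection \ref{sub:The behaviour of pseudo}, into three pieces: a compact contribution over $\{\langle \xi \rangle \leq BM^{\mu+\nu-1}\}$; the sum for $j=0,\dots,M-1$ of integrals of $a_{m_1-j,m_2-j}$ along $\Gamma_{(\xi'),M}$ in $\xi_n$; and the remainder integral of $r_{m_1-M,m_2-M}$ over $\{\langle \xi \rangle \geq BM^{\mu+\nu-1}\}$. Cauchy's theorem applies because $\hat v$, being constant in $\xi_n$, is entire.

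The estimates of each of the three pieces then proceed exactly as in the proof of Proposition \ref{prop:r+ op(a) e-}. The compact piece is controlled by the bounded volume and by $(BM^{\mu+\nu-1})^{|m_1|+|\sigma|+\alpha_n+n}$; the contour piece splits into the arc $\Gamma_{\xi'}$ (controlled by item 2 of assumption (A) together with the $\hat v$-decay estimate applied with $l := m_1 + n + 1 + |\sigma| + \alpha_n - j$) and the real segments in $\Gamma_{(\xi'), M}$ (controlled by item 1); and the remainder integral is convergent because the choice of $M$ makes the integrand decay like $\langle \xi \rangle^{-n-1}$. Summing over $\sigma, \gamma'$ gives $|x^\alpha \partial_x^\beta op(a)(v \otimes \delta)(x)| \leq C D^{|\alpha|+|\beta|}(\alpha!)^\theta (\beta!)^\theta \langle x \rangle^{m_2}$ uniformly for $x \in \mathbb{R}_+^n$, which, by the remark following the definition of the Gelfand-Shilov spaces, places $r^+op(a)(v \otimes \delta)$ in $\mathcal{S}_\theta^\theta(\mathbb{R}_+^n)$. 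The main obstacle is the combinatorial bookkeeping: one must verify that the $M$-dependent factors collapse into the target bound via the identities $(j!)^\nu M^{-j(\mu+\nu-1)} \leq 1$ and $M^{(\mu+\nu-1)(n+m_1+|\sigma|+\alpha_n)} \leq C D^{|\sigma|+\alpha_n}(\sigma!)^{\mu+\nu-1}(\alpha_n!)^{\mu+\nu-1}$, and it is precisely here that the assumption $\theta \geq \mu + \nu - 1$ is needed.
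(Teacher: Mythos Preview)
Your approach is correct but takes a genuinely different route from the paper. The paper's proof is a three-line reduction to the already-established transmission property (Proposition \ref{pro:gelf shilov transm property}): one observes the identity
\[
op(a)D_{x_n}\bigl(e^{+}u\bigr)=op(a)e^{+}\bigl(D_{x_n}u\bigr)+\tfrac{1}{i}\,op(a)\bigl(u(x',0)\otimes\delta(x_n)\bigr),
\]
picks $u(x)=v(x')e^{-x_n^{2}}\in\mathcal{S}_{\theta}^{\theta}(\mathbb{R}^{n})$, and notes that both $r^{+}op(a)D_{x_n}e^{+}u$ and $r^{+}op(a)e^{+}(D_{x_n}u)$ lie in $\mathcal{S}_{\theta}^{\theta}(\mathbb{R}_{+}^{n})$ by Proposition \ref{pro:gelf shilov transm property} (applied to the symbol $a\xi_{n}$ in the first term). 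The Poisson term is therefore forced into $\mathcal{S}_{\theta}^{\theta}(\mathbb{R}_{+}^{n})$ by subtraction. You instead re-run the full contour-deformation argument of Proposition \ref{prop:r+ op(a) e-} with $\widehat{e^{-}u}$ replaced by $\hat v(\xi')$, exploiting that $\hat v$ is constant (hence entire and bounded) in $\xi_{n}$ and that only $\gamma_{n}=\alpha_{n}$ survives in the Leibniz expansion. Your bookkeeping with $M=\max\{1,|\sigma|+m_{1}+\alpha_{n}+n+1\}$ and the inequalities $M^{(\mu+\nu-1)M}\le C D^{|\sigma|+\alpha_{n}}(\sigma!)^{\mu+\nu-1}(\alpha_{n}!)^{\mu+\nu-1}$ is sound and indeed uses $\theta\ge\mu+\nu-1$ exactly where you say. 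The paper's argument is shorter and avoids repeating the contour estimates; yours is self-contained and makes explicit that no new analytic ingredient is needed for the Poisson case beyond what already went into the transmission property.
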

\begin{proof}
We just have to note that for $u\in\mathcal{S}(\mathbb{R}^{n})$ we
have 
\[
op(a)D_{n}\left(e^{+}\left(u\right)\right)=op(a)e^{+}\left(D_{n}u\right)+\frac{1}{i}op(a)\left(u(x',0)\otimes\delta(x_{n})\right).
\]
Let us choose a function $u\in\mathcal{S}_{\theta}^{\theta}(\mathbb{R}^{n})$
such that $u(x',0)=v(x')$ for $x'\in\mathbb{R}^{n-1}$. For instance,
$u(x)=v(x')e^{-x_{n}^{2}}$. Using the Gelfand-Shilov transmission
property, we conclude that $r^{+}\left(op(a)D_{n}\left(e^{+}\left(u\right)\right)\right)$
and $r^{+}\left(op(a)\left(e^{+}\left(D_{n}u\right)\right)\right)$
belong to $\mathcal{S}_{\theta}^{\theta}(\mathbb{R}_{+}^{n})$. By
the above expression, the same must hold for the expression $r^{+}\left(op(a)\left(u(x',0)\otimes\delta(x_{n})\right)\right)$.
\end{proof}
Combining all the previous results, we now prove Theorem \ref{thm:teorema principal semi plano}.
\begin{proof}
(of the Main Theorem on the half-space, Theorem \ref{thm:teorema principal semi plano}).

We first choose $\mu>1$, such that $\nu+\mu-1<\theta$ and write
$P(x,D)$ as $P(x,D)=\sum_{j=0}^{m_{1}}P_{j}(x,D_{x'})D_{x_{n}}^{j}$,
where $P_{j}(x,D_{x'})$ is a differential operator in $D_{x'}$ of
order $\le m_{1}-j$.

We then define the function $\tilde{P}:\mathcal{S}(\mathbb{R}^{n-1})^{\oplus m_{1}}\to\mathcal{S}'(\mathbb{R}^{n})$
as
\[
\tilde{P}\left(v_{0},...,v_{m_{1}-1}\right)=\frac{1}{i}\sum_{l=0}^{m_{1}-1}\sum_{j=0}^{m_{1}-l-1}P_{j+l+1}(x',0,D')v_{l}\otimes\delta^{(j)}.
\]

Hence if $\gamma:\mathcal{S}\left(\mathbb{R}_{+}^{n}\right)\to\mathcal{S}\left(\mathbb{R}^{n-1}\right)^{\oplus m_{1}}$
is the function given by $\gamma\left(u\right)=\left(\gamma_{0}\left(u\right),...,\gamma_{m_{1}-1}\left(u\right)\right)$,
we conclude that, if $u\in\mathcal{S}(\mathbb{R}_{+}^{n})$, then
\begin{equation}
P\left(e^{+}u\right)=e^{+}P\left(u\right)+\frac{1}{i}\sum_{l=0}^{m_{1}-1}\sum_{j=0}^{m_{1}-l-1}P_{j+l+1}(x',0,D')\gamma_{l}(u)\otimes\delta^{(j)}=e^{+}P\left(u\right)+\tilde{P}\gamma(u).\label{eq:P(e^+u)}
\end{equation}

Now let $u$ be the solution of Equation \ref{eq:Equacao principal no semi plano}.
We know that $u\in\mathcal{S}(\mathbb{R}_{+}^{n})=\cap_{(s,t)\in\mathbb{R}^{2}}H^{s,t}(\mathbb{R}_{+}^{n})$,
due to the SG-ellipticity of the problem and the fact that $f\in\mathcal{S}(\mathbb{R}_{+}^{n})$
and $g_{j}\in\mathcal{S}(\mathbb{R}^{n-1})$ for all $j$, see \cite{Erkiplecturenotes}.
Let $b\in S_{\mu\nu}^{-m_{1},-m_{2}}(\mathbb{R}^{n}\times\mathbb{R}^{n})$
be a parametrix of $a$. If we apply $r^{+}Q$, where $Q=op(b)$,
to both sides of Equation \ref{eq:P(e^+u)} and use that $QP=I+R$,
where $R$ is a $\theta$-regularizing operator, we obtain:
\begin{equation}
u=r^{+}Qe^{+}\left(f\right)+r^{+}Q\tilde{P}\gamma(u)-r^{+}R\left(e^{+}u\right).\label{eq:u em termos de Q P e R}
\end{equation}

Applying $\gamma$ to the above equation, we conclude that $\gamma\left(u\right)$
must satisfy
\begin{equation}
\begin{array}{c}
\left(I-\gamma Q\tilde{P}\right)\gamma(u)=\gamma\left(Qe^{+}\left(f\right)\right)-\gamma\left(R\left(e^{+}u\right)\right)\\
B\gamma\left(u\right)=g
\end{array},\label{eq:I-Q e B}
\end{equation}
where $g=\left(g_{1},...\,,g_{r}\right)$ and $\left(B\gamma\left(u\right)\right)_{j}=\sum_{k=0}^{m_{1}-1}B_{j,k}\left(x',D'\right)\gamma_{k}\left(u\right)$.

Explicitly, this means that
\[
\begin{array}{c}
\gamma(u)-\frac{1}{i}\sum_{l=0}^{m_{1}-1}\sum_{j=0}^{m_{1}-l-1}\gamma\left(Q\left(P_{j+l+1}(x',0,D')\gamma_{l}\left(u\right)\otimes\delta^{(j)}\right)\right)=\gamma\left(Qe^{+}\left(f\right)\right)-\gamma\left(R\left(e^{+}u\right)\right)\\
\sum_{k=0}^{m_{1j}}B_{j,k}(x',D')\gamma_{k}\left(u\right)=g_{j},\,\, j=1,...,r
\end{array}.
\]

According to Proposition \ref{thm:simbolo na fronteira}, the function
$Q^{kj}:\mathcal{S}(\mathbb{R}^{n-1})\to\mathcal{S}(\mathbb{R}^{n-1})$
defined as $Q^{kj}\left(v\right):=\gamma_{k}\left(Q\left(v\otimes\delta^{(j)}\right)\right)$
defines a pseudo-differential operator with symbol in $S_{\mu\nu}^{-m_{1}+j+k+1,-m_{2}}(\mathbb{R}^{n-1}\times\mathbb{R}^{n-1})$.
We now define the following functions
\[
U_{j}=\left\langle D'\right\rangle ^{-j}\gamma_{j}\left(u\right),\, F^{j}=\left\langle D'\right\rangle ^{-j}\gamma_{j}\left(Qe^{+}\left(f\right)-R\left(e^{+}u\right)\right)\,\mbox{and}\, G^{j}=\left\langle x'\right\rangle ^{-m_{2j}}\left\langle D'\right\rangle ^{-m_{1j}}g_{j},
\]
and operators
\[
\overline{Q}=\left(\frac{1}{i}\sum_{j=0}^{m_{1}-l-1}\left\langle D'\right\rangle ^{-k}Q^{kj}P_{j+l+1}(x',D')\left\langle D'\right\rangle ^{l}\right)_{k,l}\,\mbox{and}\,\overline{B}=\left(\left\langle x'\right\rangle ^{-m_{2k}}\left\langle D'\right\rangle ^{-m_{1k}}B_{k,l}(x',D)\left\langle D'\right\rangle ^{l}\right)_{k,l}.
\]

Using the Gelfand-Shilov Transmission Property, Proposition \ref{pro:gelf shilov transm property},
and the fact that $R$ is $\theta$-regularizing operator, we conclude
that $F^{j}\in\mathcal{S}_{\theta}^{\theta}(\mathbb{R}^{n-1})$. As
$g_{j}\in\mathcal{S}_{\theta}^{\theta}(\mathbb{R}^{n-1})$, using
Proposition \ref{pro:continuity in gelfandshilov}, we conclude that
$G_{j}\in\mathcal{S}_{\theta}^{\theta}(\mathbb{R}^{n-1})$, for all
$j$.

Equation \ref{eq:I-Q e B} is equivalent to
\[
\left(\begin{array}{c}
I-\overline{Q}\\
\overline{B}
\end{array}\right)\left(\begin{array}{c}
U_{0}\\
\vdots\\
U_{m_{1}-1}
\end{array}\right)=\left(\begin{array}{c}
F_{0}\\
\vdots\\
F_{m_{1}-1}\\
G_{0}\\
\vdots\\
G_{r-1}
\end{array}\right).
\]

The operator $\left(\begin{array}{c}
I-\overline{Q}\\
\overline{B}
\end{array}\right)$ is a pseudo-differential operator, whose symbol belongs to 
\[
S_{\mu\nu}^{0,0}\left(\mathbb{R}^{n-1}\times\mathbb{R}^{n-1},\mathcal{B}\left(\mathbb{C}^{m_{1}},\mathbb{C}^{\frac{3}{2}m_{1}}\right)\right).
\]

As a consequence of SG-Lopatinski-Shapiro condition \cite[Lemma 2]{Erkipcommunications},
this operator is a left elliptic pseudo-differential operator. Therefore
by Theorem \ref{thm:Regularity}, $U_{j}\in\mathcal{S}_{\theta}^{\theta}(\mathbb{R}^{n-1})$
for all $j$ and $\gamma_{j}\left(u\right)=\left\langle D'\right\rangle ^{j}U_{j}\in\mathcal{S}_{\theta}^{\theta}(\mathbb{R}^{n-1})$.
Using the Gelfand-Shilov Transmission Property, Proposition \ref{pro:gelf shilov transm property},
we conclude that $r^{+}Qe^{+}(f)\in\mathcal{S}_{\theta}^{\theta}(\mathbb{R}_{+}^{n})$.
The properties of the Poisson operator, Proposition \ref{thm:Poisson},
imply that $r^{+}Q\tilde{P}\gamma(u)\in\mathcal{S}_{\theta}^{\theta}(\mathbb{R}^{n})$.
As $R$ is a $\theta$-regularizing operator, the result follows then
from Equation \ref{eq:u em termos de Q P e R}.
\end{proof}
We note that, in the previous proof, we only need the left ellipticity
of $\left(\begin{array}{c}
I-\overline{Q}\\
\overline{B}
\end{array}\right)$. Hence the result holds for even more general boundary value problems
operators then just the ones that satisfy the SG-Lopatinski-Shapiro
condition.

{\section*{Acknowledgements}  The author would like to thank Professor Jorge G. Hounie and Professor Elmar Schrohe for fruitful discussions. We would also like to thank Professors G. Hoepfner, R. F. Barostichi, J. R. Santos Filho from UFSCar and L. Rodino from Torino for suggesting references.

\bibliographystyle{amsplain}

\end{document}